\newtheorem{theorem}{Theorem}% 	 	[section]
\newtheorem{lemma}[theorem]{Lemma}%[section]
\theoremstyle{definition}
\newtheorem{definition}[theorem]{Definition}
\newtheorem{remark}[theorem]{Remark}%[section]
\numberwithin{equation}{section}
\numberwithin{theorem}{section}
\newcommand{\de}{\,\mathrm{d}}
\newcommand{\tr}{\mathrm{tr}}
\newcommand{\ess}{\mathrm{ess}\,}
\title{Spectral theoretic characterisation of Markov chain convergence}
\author{Bryn Davies \thanks{Warwick Mathematics Institute, University of Warwick, Coventry CV4 7AL, UK (bryn.davies@warwick.ac.uk) } \thanks{\footnotesize Department of Mathematics, Imperial College London, London SW7 2AZ, UK} \and Angelica Yu Xiao\footnotemark[2] \thanks{Mathematical Institute, University of Oxford, Oxford OX2 6GG, UK} }
\date{}
\begin{document}
	\maketitle

	\begin{abstract}
	In this work, we characterise the statistics of Markov chains by constructing an associated sequence of periodic differential operators. Studying the density of states of these operators reveals the absolutely continuous invariant measure of the Markov chain. This approach also leads to a direct proof of $L^1$ convergence to the invariant measure, along with explicit convergence rates. We show how our method can be applied to a class of related Markov chains including the logistic map, the tent map and Chebyshev maps of arbitrary order.
	\end{abstract}
\vspace{0.5cm}
	\noindent{\textbf{Mathematics Subject Classification (MSC2020):} 37A05, 34L05, 58J51

\vspace{0.2cm}

	\noindent{\textbf{Keywords:}} logistic map, Chebyshev polynomials, tent map, chaos, density of states, absolutely continuous invariant measure.
\vspace{0.5cm}

\section{Introduction}

A common strategy for making mathematical breakthroughs is to exploit connections between apparently unrelated fields. For example, to the untrained eye, dynamical systems and spectral theory for differential operators might seem to be distinct topics, however exploiting connections between the two worlds has been a longstanding basis for breakthroughs. For example, much of the spectral theory of one-dimensional quasi-periodic operators is based on dynamical system formulations of problems (allowing for the language of cocycles and Lyapunov exponents to be used) \cite{avila2009ten, simon1982almost, surace1990schrodinger}. Similarly, moment problems of recurrence relations can be reformulated as spectral problems of self-adjoint operators on suitably chosen Hilbert spaces  \cite{elwahbi2004recursive}. 

In this paper, we will study the behaviour of chaotic nonlinear recursion relations by constructing associated sequences of periodic differential operators whose discriminants satisfy the given relation. We will demonstrate this approach on the canonical example of the logistic map, as well as some related, more general models. This approach is inspired by the seminal work of \cite{kohmoto1983localization}, who studied the spectra of periodic operators with unit cells generated by a Fibonacci tiling rule by deriving a recursion relation satisfied by the associated discriminants. This discovery has underpinned the many subsequent studies of Fibonacci-generated materials \cite{morini2018waves, davies2022symmetry, davies2023super, kolavr1993new, jagannathan2021fibonacci, gumbs1988dynamical}. While this connection has mostly been exploited to use dynamical systems theory to prove statements about the spectra of the differential operators, the converse association is also useful. For example, the statistics of chaotic dynamical systems can be challenging to understand, while the densities of states of periodic elliptic operators are well understood and usually straightforward to characterise explicitly. Recently, this connection was used to characterise the limiting statistics of the chaotic dynamical system associated to Fibonacci-generated materials \cite{davies2024vanhove}, an idea that is generalised in this work. 

The construction employed in this paper uses one-dimensional periodic Hill operators. We consider the monodromy matrices associated to exponentially inflating unit cells $[0,2^n]$, $n\in\mathbb{N}$, which are powers of the first monodromy matrix. As a result, the discriminants (defined as the traces of the monodromy matrices) will follow certain recursion relations. Using the well-known transformation that relates the logistic map with the Mandelbrot set (see \emph{e.g.} \cite{peitgen1986beauty} and references therein), we will show that the discriminants satisfy the logistic map with $r=4$. The exact construction of a periodic operator used here was first observed in \cite[Remark~4.1]{davies2022symmetry}. Whereas \cite{davies2024vanhove} needed a sequence of distinct operators (generated by a Fibonacci tiling rule) to obtain the desired recursion relation, just a single operator is needed here, with artificially inflated unit cells.

There are several benefits of studying the sequence of differential operators instead of the recurrence relations directly. Firstly, we will use the Floquet-Bloch theory for periodic operators to prove that the statistics of the recursion relation converge to the invariant measure (in Section~\ref{sec:convergence}). This direct proof is achieved naturally by visualising the recursion via spectral band diagrams; the crucial observation is that doubling the length of the unit cells folds the spectral bands in half. This approach is not only intuitive but also gives the explicit convergence rates. Additionally, this approach provides new intuition on the features of the invariant measure, for example by linking its poles to the Van Hove singularities that are well-known to appear in the density of states of periodic operators \cite{van1953occurrence}.

We will extend these ideas to generalised logistic maps in Section \ref{sec:generalised_logistic}, where we inflate the unit cell by an arbitrary ratio $m\in\mathbb{N}$. This is a natural generalisation of the arguments for the logistic map, so we obtain analogous convergence results with corresponding convergence rate estimates. In Section~\ref{sec:explicit_formulas}, we show that our approach can be used to generate explicit formulas for orbits of the recursion relations. This is achieved by choosing suitable potentials in the Hill operator, such that it is possible to solve the differential equations and get explicit expressions for the discriminants. For example, taking homogeneous potentials leads to the well-known formula for logistic map solutions $x_n=\sin^2(2^n \sin^{-1}(\sqrt{x_0}))$ from \cite{lorenz1964problem}. We also found a different formula that involves Mathieu functions, by choosing sinusoidal potentials. Finally, in Section \ref{sec:conjugacy}, we will explore the conjugacy of the generalised logistic maps to the Chebyshev polynomials and multiple-tent maps. This widens the class of Markov chains for which our method can be applied and also provides an avenue to understanding their Lyapunov exponents, in Section~\ref{sec:lyapunov}.

\section{Mathematical prerequisites}

\subsection{The logistic map}

To demonstrate our approach, we will consider the canonical example of the logistic map. The logistic map was popularised by Robert May in 1976 \cite{may1976simple} and since then become a stereotypical introductory example of a chaotic dynamical system. It is the recursion relation
\begin{equation} \label{eq:logistic_map_def}
    x_{n+1} = F(x_n) \coloneqq rx_n(1-x_n),
\end{equation}
where $r>0$ is a real-valued parameter. The logistic map is often used as a simple model for population dynamics in discrete time. The normalised $x_n\in[0,1]$ represents the population at time $n$ as a proportion of the maximum possible population accommodated by limited resources. A future population $x_{n+1}$ depends on the current population $x_n$, the remaining resources $(1-x_n)$ and the growth rate $r>0$.

The qualitative behaviour of the logistic map \eqref{eq:logistic_map_def} depends on the growth rate parameter $r$. For any $r$ in $[0,4]$, the function \eqref{eq:logistic_map_def} maps $[0,1]$ to $[0,1]$, so that the values remain bounded. However, different behaviours of the system are observed for different values of $r$.  If $0<r<1$, then the population will eventually die out (\emph{i.e.} $x_n\to0$). If $1<r<3$, then the population will converge to the stable fixed point $\frac{r-1}{r}$. For $3<r<1+\sqrt{6}$, a period doubling bifurcation occurs, and the system will converge to a stable 2-periodic limit cycle. The most interesting behaviour happens when $1+\sqrt{6}<r\leq 4$, as the system undergoes successive period doubling bifurcations and becomes chaotic. A review of the rich and complicated behaviour that can happen in the chaotic regime can be found in \emph{e.g.} \cite{lyubich2000quadratic}.

The most widely studied case of the logistic map is the special case $r=4$. In this case, the chaotic behaviour of orbits has been studied extensively. There is even an explicit formula for the solutions:
\begin{equation} \label{eq:explicit_formula_sine}
    x_n=\sin^2(2^n\sin^{-1}(\sqrt{x_0})),
\end{equation}
thanks to \cite{lorenz1964problem}. It is also well known that the logistic map with $r=4$ admits an absolutely continuous invariant measure, given by the density
\begin{equation} \label{eq:logistic_invariant_measure}
    q(x) = \frac{1}{\pi}\frac{1}{\sqrt{x(1-x)}}.
\end{equation}
That is, if the random variable $X_0$ has the probability density function $q(x)$, then $X_n = F^n(X_0)$ will have the same probability density function \eqref{eq:logistic_invariant_measure}, for all $n\in\mathbb{N}$.

In this paper, we focus on the question of having a random variable $X_0$ as the initial value and seeking to characterise the distributions of the sequence of random variables $X_n = F^n(X_0)$. Due to the fact that there is a well-known absolutely continuous invariant measure and the interval $(0,1)$ is an attractor, the invariant measure \eqref{eq:logistic_invariant_measure} is also a limiting measure. That is, starting from any $X_0$ with absolutely continuous measure, the measure of the sequence $(X_n)$ will converge to it. This is formalised in the following theorem, which can be proved using standard results in ergodic theory. Its proof is given in Appendix~\ref{app:ergodic}.

\begin{theorem} \label{thm:ergodic}
For random variable $X_0$ with a probability density function $p_0\in L^1[0,1]$ such that $p_0\geq0$ and $\int_0^1 p_0 = 1$, the measure of $X_n = F^n(X_0)$ converges set-wise to the invariant measure defined by the density function \eqref{eq:logistic_invariant_measure}:
\begin{equation*}
    \lim_{n\rightarrow\infty}P(X_n\in A)=\int_A q(x)\,dx\quad\text{for all measurable sets $A\subseteq[0,1]$.}
\end{equation*}
\end{theorem}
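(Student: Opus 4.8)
The plan is to recast the statement as a convergence result for the Perron--Frobenius (transfer) operator of $F(x)=4x(1-x)$ and then read it off from the exactness of the associated dynamical system. Let $\mathcal{L}\colon L^1[0,1]\to L^1[0,1]$ denote the Perron--Frobenius operator of $F$ with respect to Lebesgue measure, characterised by $\int_0^1(\mathcal{L}f)\,g\,\mathrm{d}x=\int_0^1 f\,(g\circ F)\,\mathrm{d}x$ for all $g\in L^\infty[0,1]$. The standard change-of-variables identity for transfer operators shows that if $X_0$ has density $p_0$ then $X_n=F^n(X_0)$ has density $\mathcal{L}^n p_0$, and that $\mathcal{L}q=q$, which is the invariance of \eqref{eq:logistic_invariant_measure}. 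Since for every measurable $A\subseteq[0,1]$
\[
\Bigl|P(X_n\in A)-\int_A q(x)\,\mathrm{d}x\Bigr|=\Bigl|\int_A\bigl(\mathcal{L}^n p_0-q\bigr)\,\mathrm{d}x\Bigr|\le\bigl\|\mathcal{L}^n p_0-q\bigr\|_{L^1[0,1]},
\]
it suffices to prove that $\|\mathcal{L}^n p_0-q\|_{L^1[0,1]}\to0$ for every $p_0$ as in the hypothesis; this even yields the stronger statement that the laws of $X_n$ converge to $\mu$ (with $\mathrm{d}\mu=q\,\mathrm{d}x$) in total variation.

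The analytic core is that $\bigl([0,1],\mathcal{B},\mu,F\bigr)$ is \emph{exact}, i.e.\ $\bigcap_{n\ge0}F^{-n}\mathcal{B}$ is $\mu$-trivial. I would obtain this in the same conjugacy spirit used elsewhere in the paper: writing $\sigma(\theta)=\sin^2(\pi\theta)$, a direct computation gives $F\circ\sigma=\sigma\circ D$, where $D(\theta)=2\theta\bmod1$ is the doubling map on $\mathbb{T}=\mathbb{R}/\mathbb{Z}$, while a short computation (essentially the one behind \eqref{eq:explicit_formula_sine} and \eqref{eq:logistic_invariant_measure}) shows that $\sigma$ pushes Lebesgue measure on $\mathbb{T}$ forward to $\mu$. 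Thus $([0,1],\mu,F)$ is a measure-preserving factor of $(\mathbb{T},\mathrm{Leb},D)$; since $D$ is exact (classical, via binary expansions and Kolmogorov's zero--one law) and exactness passes to factors, $F$ is exact. Alternatively, one could invoke the standard spectral theory of piecewise-expanding interval maps, which gives exactness of $F=4x(1-x)$ directly.

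It remains to turn exactness into $L^1$ convergence of $\mathcal{L}^n p_0$. By a classical theorem of Lin, exactness of $(F,\mu)$ is equivalent to asymptotic stability of the transfer operator taken \emph{with respect to $\mu$}: for every $h\in L^1(\mu)$ one has $\mathcal{L}_\mu^n h\to\bigl(\int h\,\mathrm{d}\mu\bigr)\mathbf{1}$ in $L^1(\mu)$. The two transfer operators are related by $\mathcal{L}_\mu h=\tfrac1q\,\mathcal{L}(hq)$, so that $\mathcal{L}^n p_0=q\cdot\mathcal{L}_\mu^n(p_0/q)$. Applying Lin's theorem to $h=p_0/q$, which belongs to $L^1(\mu)$ with $\int h\,\mathrm{d}\mu=\int_0^1 p_0\,\mathrm{d}x=1$, yields $\mathcal{L}_\mu^n(p_0/q)\to\mathbf{1}$ in $L^1(\mu)$, that is
\[
\bigl\|\mathcal{L}^n p_0-q\bigr\|_{L^1[0,1]}=\int_0^1\bigl|\mathcal{L}_\mu^n(p_0/q)-1\bigr|\,q(x)\,\mathrm{d}x\longrightarrow 0,
\]
which together with the first paragraph completes the argument.

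The only non-formal ingredient is the exactness of $F$ --- equivalently, that of the doubling map together with the verification that $\sigma$ intertwines the two systems and their reference measures; the remaining steps are routine manipulations of transfer operators. One minor technical point worth tracking is that $q$ blows up at $0$ and $1$, so $L^1(\mu)$ is a proper subspace of $L^1[0,1]$ and the two norms are not equivalent; this causes no difficulty here because we only ever feed $p_0/q$ (which automatically has $\mu$-norm $\int_0^1 p_0=1$) into the $\mu$-normalised operator, and the $\mu$-null set $\{0,1\}$ may be discarded throughout.
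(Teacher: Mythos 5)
Your argument is correct, but it takes a genuinely different route from the paper's. The paper (Appendix~\ref{app:ergodic}) transfers the problem to the multiple-tent map $g_m$ via the conjugacy of Section~\ref{sec:conjugacy}, proves that $g_m$ is \emph{strongly mixing} by an explicit $m$-ary-expansion shift argument plus a $\pi$-$\lambda$ system step, and then cites a result of Boyarsky--G\'ora (their Proposition~4.2.12, quoted as Theorem~\ref{thm:weak_convergence}) giving exactly the set-wise convergence $\int_B P_{\tau^n}p\,\de\mu\to\mu(B)$ asserted in Theorem~\ref{thm:ergodic}; the strong $L^1$ statement with rates is deliberately postponed to Section~\ref{sec:convergence}, since the BV machinery (Theorem~\ref{thm:strong_convergence}) does not apply to $q$, which has unbounded variation. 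You instead realise $([0,1],\mu,F)$ as a measure-theoretic \emph{factor} of the doubling map through $\sigma(\theta)=\sin^2(\pi\theta)$ (a semiconjugacy rather than the paper's conjugacy to the tent map), deduce \emph{exactness} --- a strictly stronger property than strong mixing --- from exactness of the doubling map and its inheritance by factors, and then convert exactness into asymptotic stability of the transfer operator via Lin's criterion, together with the standard identity $\mathcal{L}_\mu h=\tfrac1q\,\mathcal{L}(hq)$; all of these steps check out, including the computation $F\circ\sigma=\sigma\circ D$ and the pushforward of Lebesgue measure to $q\,\de x$. What your route buys is a stronger conclusion than the theorem states (total-variation convergence for every $p_0\in L^1$, i.e.\ the $L^1$ part of Theorem~\ref{thm:logistic_convergence} without rates), obtained from citable classical results; what the paper's route buys is that it stays within the strong-mixing framework of a single reference, matches the set-wise statement exactly, and its expansion argument generalises uniformly to all $g_m$. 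Neither argument produces the explicit $O(2^{-n})$ rate, which in the paper comes only from the band-folding construction of Section~\ref{sec:convergence}.
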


For strong convergence, standard ergodic theory often works with the $BV$ space of bounded variation functions (see e.g. \cite{rychlik1983bounded, bruin2010existence, lasota1973existence}). Thus, the results do not apply directly for the logistic map $F$, whose invariant density \eqref{eq:logistic_invariant_measure} blows up at endpoints. In contrast, this work takes advantage of the reformulation of the problem in terms of the spectra of a sequence of periodic differential operators to give a direct proof of strong convergence in the $L^1$ space. It will turn out, that this direct proof will come with explicit convergence rate estimates and can be generalised to a class of related Markov chains. It will also reveal fundamental mechanisms for the properties of the invariant measure, such as how singularities are due to the underlying Van Hove singularities in the spectra of the periodic operators.

\subsection{Periodic differential operators} \label{sec:periodic_differential_operators}
The differential operator whose spectrum we consider in this paper is the one-dimensional Hill operator 
\begin{equation} \label{eq:Hdef}
    H = -\frac{\de^2}{\de x^2}+V(x).
\end{equation}
where $V$ is an $l$-periodic real-valued potential function. See \cite{kuchment2016overview} for a comprehensive review of the spectral theory of such periodic elliptic differential operators. In this section we will introduce the key ideas that we will need: Floquet-Bloch theory, monodromy matrices, dispersion relations and the density of states, all of which are crucial to our approach.

\subsubsection{Monodromy matrix and discriminant}

\begin{definition}
    Let $\varphi_\lambda$ and $\psi_\lambda$ be solutions of $Hu=\lambda u$ satisfying the initial conditions
    \begin{equation} \label{eq:basis_solutions_bc}
        \varphi_\lambda(0)=\psi_\lambda'(0)=1,\quad \varphi_\lambda'(0)=\psi_\lambda(0)=0.
    \end{equation}
    For an $l$-periodic potential $V$, we define the monodromy matrix for the unit cell $[0,l]$ as
    \begin{equation} \label{eq:MonodMatDefn}
        M_l(\lambda)\coloneq\begin{bmatrix}
    \varphi_\lambda(l) & \psi_\lambda(l) \\
    \varphi_\lambda'(l) & \psi_\lambda'(l) 
    \end{bmatrix}.
    \end{equation}
\end{definition}
\begin{remark} \label{rmk:monodromy_action}
    If $y$ is a solution to $Hy=\lambda y$, then it is a linear combination of $\varphi_\lambda$ and $\psi_\lambda$ and it must hold that
    \begin{equation*}
        \begin{bmatrix}
            y(l) \\
            y'(l)
        \end{bmatrix} = M_l\begin{bmatrix}
            y(0) \\
            y'(0)
        \end{bmatrix}
    \end{equation*}
\end{remark}

Since the operator $H$ \eqref{eq:Hdef} does not have a first order term, $\det M_l$ the Wronskian of the Hill operator is constant. This can be interpreted physically as there not being any damping in the system, so the total energy in the system (captured by the Wronskian) is conserved. In particular, it holds that
\begin{equation} \label{eq:det=1}
    \det M_l\equiv 1\quad\text{for all $l$.}
\end{equation}
Then the monodromy matrix $M_l$ can be characterised by its trace, which fully determines its eigenvalues.

\begin{definition}
    The discriminant $\Delta_l$ for the Hill operator is defined as the trace of the monodromy matrix:
    \begin{equation*}
    \Delta_l \coloneqq \tr M_l.
\end{equation*}
\end{definition}

\begin{lemma} \label{lm:discriminant}
The eigenvalues of the monodromy matrix $M_l$ are determined by the value of the discriminant $\Delta_l$. Further,
    \begin{enumerate}
        \item When $|\Delta_l|<2$, $M_l$ has two complex conjugate eigenvalues with modulus 1.
        \item When $|\Delta_l|=2$, $M_l$ has a repeated eigenvalue equal to either 1 or -1.
        \item When $|\Delta_l|>2$, $M_l$ has two distinct real eigenvalues, which multiply to one.
    \end{enumerate}
\end{lemma}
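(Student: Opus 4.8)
The plan is to work directly with the characteristic polynomial of $M_l$. Since $\det M_l \equiv 1$ by \eqref{eq:det=1} and $\tr M_l = \Delta_l$ by definition, the eigenvalues $\mu$ of $M_l$ are precisely the roots of
\begin{equation*}
    \mu^2 - \Delta_l \mu + 1 = 0,
\end{equation*}
so that $\mu_\pm = \tfrac12\bigl(\Delta_l \pm \sqrt{\Delta_l^2 - 4}\bigr)$ and $\mu_+ \mu_- = 1$. Everything then reduces to analysing the sign of the scalar discriminant $\Delta_l^2 - 4$ of this quadratic. One should first note that $\Delta_l$ is real: since $V$ is real-valued and the initial conditions \eqref{eq:basis_solutions_bc} are real, $\varphi_\lambda$ and $\psi_\lambda$ (hence all entries of $M_l$) are real for real $\lambda$, so the trichotomy $|\Delta_l| < 2$, $=2$, $>2$ is genuinely exhaustive.

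Next I would treat the three cases in turn. For $|\Delta_l| < 2$ we have $\Delta_l^2 - 4 < 0$, so $\mu_\pm = \tfrac12 \Delta_l \pm \tfrac{i}{2}\sqrt{4 - \Delta_l^2}$ form a complex conjugate pair; then $|\mu_\pm|^2 = \tfrac14(\Delta_l^2 + 4 - \Delta_l^2) = 1$ (equivalently $|\mu_+|\,|\mu_-| = \mu_+ \overline{\mu_+} = 1$ together with $|\mu_+| = |\mu_-|$), giving modulus one. For $|\Delta_l| = 2$ the square root vanishes and $\mu_+ = \mu_- = \Delta_l/2 \in \{1, -1\}$, a repeated eigenvalue. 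For $|\Delta_l| > 2$ we have $\Delta_l^2 - 4 > 0$, so $\sqrt{\Delta_l^2 - 4}$ is a positive real number strictly smaller than $|\Delta_l|$; hence $\mu_+$ and $\mu_-$ are two distinct real numbers, neither zero (their product is $1$) and both of the sign of $\Delta_l$.

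There is essentially no obstacle here: the statement is a self-contained fact about $2 \times 2$ real matrices with unit determinant, and the only point requiring care is the borderline case $|\Delta_l| = 2$, where $M_l$ need not be diagonalisable — it is a single Jordan block unless $M_l = \pm I$ — which is why the lemma only asserts the \emph{value} of the repeated eigenvalue rather than diagonalisability. No earlier result beyond \eqref{eq:det=1} is needed.
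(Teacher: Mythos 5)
Your proof is correct, and it is exactly the standard argument the paper implicitly relies on: the paper states this lemma without proof, since it follows directly from the characteristic polynomial $\mu^2 - \Delta_l\mu + 1 = 0$ together with $\det M_l \equiv 1$ from \eqref{eq:det=1}. Your case analysis on the sign of $\Delta_l^2 - 4$, including the remark about possible non-diagonalisability when $|\Delta_l| = 2$, is complete and accurate.
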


\subsubsection{Floquet-Bloch spectrum}

We also want to consider the spectrum of $H$. It can be shown that the spectrum is a union of countably many closed intervals, which are called the spectral bands.
\begin{lemma}
    There exists a sequence of real numbers
    \begin{equation*}
        a_1<b_1\leq a_2<b_2\leq a_3<\dots\rightarrow\infty
    \end{equation*}
    such that the spectrum of the Hill operator is
    \begin{equation*}
        \sigma(H)=\bigcup_{i=1}^\infty[a_i,b_i].
    \end{equation*}
\end{lemma}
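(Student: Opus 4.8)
The plan is to reduce the statement to an analysis of the entire function $\lambda\mapsto\Delta_l(\lambda)$, using the standard fact from Floquet--Bloch theory that a real number $\lambda$ belongs to $\sigma(H)$ precisely when the monodromy matrix $M_l(\lambda)$ has an eigenvalue on the unit circle. Combining this with Lemma~\ref{lm:discriminant} and \eqref{eq:det=1} gives the characterisation
\[
\sigma(H) = \{\lambda\in\mathbb{R} : |\Delta_l(\lambda)|\le 2\} = \Delta_l^{-1}([-2,2]).
\]
For the ``$\supseteq$'' direction one produces, for any $\lambda$ with $|\Delta_l(\lambda)|\le 2$, a bounded Bloch solution of $Hu=\lambda u$ by taking the eigenvector of $M_l(\lambda)$ for the unit-modulus eigenvalue $e^{il\theta}$ with $2\cos(l\theta)=\Delta_l(\lambda)$ (using Remark~\ref{rmk:monodromy_action}) and truncating it to a Weyl sequence; for the ``$\subseteq$'' direction, if $|\Delta_l(\lambda)|>2$ then every nontrivial solution grows exponentially at $+\infty$ or $-\infty$, which lets one build a bounded resolvent. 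Alternatively this characterisation may be imported from \cite{kuchment2016overview}.

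Next I would record the analytic properties of $\Delta_l$. The solutions $\varphi_\lambda,\psi_\lambda$ of $Hu=\lambda u$ and their derivatives at $l$ depend holomorphically on $\lambda\in\mathbb{C}$ (the Picard--Lindel\"of series converges locally uniformly in $\lambda$), so $\lambda\mapsto\Delta_l(\lambda)=\varphi_\lambda(l)+\psi_\lambda'(l)$ extends to an entire function (of order $1/2$) that is real-valued and real-analytic on $\mathbb{R}$. Comparing $Hu=\lambda u$ with the constant-coefficient equation $-u''=\lambda u$ for $-\lambda$ large (and $V$ bounded) shows that $M_l(\lambda)$ acquires an eigenvalue of modulus $\sim e^{l\sqrt{-\lambda}}$, so $\Delta_l(\lambda)\to+\infty$ as $\lambda\to-\infty$; in particular $\sigma(H)$ is bounded below and we may set $a_1\coloneqq\min\sigma(H)$.

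The core of the argument is then the oscillation theorem for $\Delta_l$: the level sets $\{\Delta_l=2\}$ and $\{\Delta_l=-2\}$ each consist of a real sequence tending to $+\infty$, interlacing so that $\Delta_l^{-1}([-2,2])$ is exactly a union $\bigcup_{i=1}^\infty[a_i,b_i]$ with $a_1<b_1\le a_2<b_2\le\cdots\to\infty$. The two ingredients are (i) $\Delta_l$ has no critical value strictly inside $(-2,2)$, i.e.\ every band edge occurs at $|\Delta_l|=2$; and (ii) the number of solutions of $\Delta_l(\lambda)^2=4$ in $[a_1,R]$ grows without bound (indeed like $R^{1/2}$, matching Weyl's law). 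For (i) the cleanest route is to parametrise the spectrum by the quasimomentum: for $\theta\in[0,\pi/l]$ the operator $H_\theta$ on $L^2[0,l]$ with boundary condition $u(l)=e^{il\theta}u(0)$, $u'(l)=e^{il\theta}u'(0)$ is self-adjoint with compact resolvent and eigenvalues $\lambda_1(\theta)\le\lambda_2(\theta)\le\cdots\to\infty$; a Sturm-type comparison argument shows each $\lambda_j(\cdot)$ is strictly monotone on $(0,\pi/l)$, and since $\Delta_l(\lambda_j(\theta))=2\cos(l\theta)$ this forces the $j$-th band to be the closed interval $[a_j,b_j]$ with endpoints $\lambda_j(0),\lambda_j(\pi/l)$ and rules out interior extrema of $\Delta_l$ with value in $(-2,2)$. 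For (ii) one again invokes the large-$\lambda$ asymptotics. Reindexing the $[a_j,b_j]$ in increasing order and using $b_j\le a_{j+1}$ (consecutive bands are disjoint because there is no spectrum where $|\Delta_l|>2$) yields the stated sequence.

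I expect the main obstacle to be exactly the monotonicity/oscillation step: soft functional analysis gives discreteness of each $\sigma(H_\theta)$ and continuity of the band functions for free, but proving that the bands do not overlap — equivalently, that $\Delta_l$ has no critical value in $(-2,2)$ — genuinely requires a Sturmian oscillation argument (counting zeros of solutions, or differentiating the relation $\Delta_l(\lambda_j(\theta))=2\cos(l\theta)$ and controlling the sign of $\dot\lambda_j(\theta)$ via an integral of $|u_{j,\theta}|^2$). Everything else in the proof is classical and could, if desired, be cited wholesale from \cite{kuchment2016overview}.
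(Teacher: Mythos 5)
The paper does not prove this lemma at all: it is quoted as a classical fact from the spectral theory of Hill operators, with the heavy lifting delegated to the cited review \cite{kuchment2016overview}. Your outline is the standard textbook proof of that classical fact, and it is correct in structure: the characterisation $\sigma(H)=\Delta_l^{-1}([-2,2])$ via Lemma~\ref{lm:discriminant} and \eqref{eq:det=1}, entirety and real-analyticity of $\Delta_l$ with $\Delta_l\to+\infty$ as $\lambda\to-\infty$ (giving boundedness below), the quasimomentum parametrisation by the self-adjoint operators $H_\theta$ with strict monotonicity of the band functions $\lambda_j(\theta)$ (equivalently, absence of critical values of $\Delta_l$ in $(-2,2)$), and large-$\lambda$ asymptotics to get infinitely many bands escaping to $+\infty$. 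You correctly identify that the only genuinely nontrivial ingredient is the Sturm-type oscillation/monotonicity step, which you state but do not carry out; as a self-contained proof your text therefore still has that gap, but since you explicitly offer to import it (or the whole statement) from \cite{kuchment2016overview}, this matches the level of rigour the paper itself adopts, which is simply to cite the result. One cosmetic point: your monotonicity argument also delivers the nondegeneracy $a_i<b_i$ of each band (the endpoints are $\lambda_j(0)\neq\lambda_j(\pi/l)$), which is part of the statement and worth saying explicitly.
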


The spectrum of $H$ can be decomposed using the Floquet-Bloch theory for periodic operators \cite{kuchment2016overview}. This states that any solution to the eigenvalue problem $Hy = \lambda y$ is of the form 
\begin{equation*}
    y(x)=e^{ikx}p(x),
\end{equation*}
where $p$ is a real $l$-periodic function and $k$ is a real-valued parameter called the quasi-momentum. Moreover, $e^{ikl}$ must be an eigenvalue of the monodromy matrix \eqref{eq:MonodMatDefn}. Conversely, whenever $e^{ikl}$ is an eigenvalue of $M_l(\lambda)$, there exists a solution of $Hy=\lambda y$ in the form $y(x)=e^{ikx}p(x)$. Since $k\in\mathbb{R}$, the solutions are bounded. This occurs exactly when $|\Delta_l(\lambda)|\leq2$, by Lemma \ref{lm:discriminant}. 

\begin{lemma}
    $\lambda\in\sigma(H)$ if and only if there exists $k\in\mathbb{R}$ such that $(k,\lambda)$ satisfies the following dispersion relation:
    \begin{equation} \label{eq:dispersion}
    \Delta_l(\lambda) = 2\cos(lk).
\end{equation}
\end{lemma}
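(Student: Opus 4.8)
The plan is to combine the Floquet--Bloch description of the solutions of $Hy=\lambda y$ recalled above with the trace--determinant characterisation of the eigenvalues of $M_l(\lambda)$ from Lemma~\ref{lm:discriminant}. I would first establish the purely algebraic equivalence between solvability of \eqref{eq:dispersion} in $k\in\mathbb{R}$ and the inequality $|\Delta_l(\lambda)|\le 2$, and then identify the set $\{\lambda:|\Delta_l(\lambda)|\le 2\}$ with $\sigma(H)$.

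For the algebraic step: since $\det M_l(\lambda)=1$ by \eqref{eq:det=1}, the eigenvalues $\mu_\pm(\lambda)$ of $M_l(\lambda)$ are the roots of $\mu^2-\Delta_l(\lambda)\mu+1=0$, so $\mu_+\mu_-=1$ and $\mu_++\mu_-=\Delta_l(\lambda)$. Hence a number $e^{ikl}$ with $k\in\mathbb{R}$ is an eigenvalue of $M_l(\lambda)$ if and only if at least one eigenvalue lies on the unit circle; because the eigenvalues multiply to $1$, this happens precisely when both lie on the unit circle as a complex conjugate pair, which by Lemma~\ref{lm:discriminant} is equivalent to $|\Delta_l(\lambda)|\le 2$. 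In that case I would write $\Delta_l(\lambda)=2\cos\theta$ for a unique $\theta\in[0,\pi]$, so the eigenvalues are $e^{\pm i\theta}$ and the choice $k=\theta/l$ gives $\Delta_l(\lambda)=2\cos(lk)$; conversely, $\Delta_l(\lambda)=2\cos(lk)$ with $k\in\mathbb{R}$ forces $|\Delta_l(\lambda)|\le 2$. By the Floquet--Bloch statements quoted immediately before the lemma, this is in turn equivalent to the existence of a nontrivial bounded solution $y(x)=e^{ikx}p(x)$ of $Hy=\lambda y$ with $p$ an $l$-periodic function.

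It then remains to show that $\lambda\in\sigma(H)$ if and only if such a bounded solution exists. When $|\Delta_l(\lambda)|\le 2$, I would take the genuine Bloch solution $y=e^{ikx}p(x)$ and form a Weyl sequence by multiplying it by smooth cutoffs $\chi_N$ that equal $1$ on $[-N,N]$ and vanish outside $[-N-1,N+1]$: then $(H-\lambda)(\chi_N y)$ is supported in two strips of fixed width while $\|\chi_N y\|_{L^2}\to\infty$, so $\|(H-\lambda)(\chi_N y)\|/\|\chi_N y\|\to 0$ and $\lambda\in\sigma(H)$. When $|\Delta_l(\lambda)|>2$, Lemma~\ref{lm:discriminant} gives distinct real eigenvalues $\mu,\mu^{-1}$ with $|\mu|>1$, hence solutions of the form $e^{\gamma x}p_+(x)$ and $e^{-\gamma x}p_-(x)$ with $\gamma=\tfrac1l\log|\mu|>0$ and $p_\pm$ periodic, one decaying at $+\infty$ and one at $-\infty$; from these one builds an explicit bounded Green's function for $H-\lambda$ on $L^2(\mathbb{R})$, so $\lambda$ lies in the resolvent set. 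Alternatively one may simply invoke the Floquet--Bloch fibre decomposition $H\cong\int^{\oplus} H_k\,\de k$ over the Brillouin zone, under which $\sigma(H)=\overline{\bigcup_k\sigma(H_k)}$ and $\lambda\in\sigma(H_k)$ exactly when $e^{ikl}$ is an eigenvalue of $M_l(\lambda)$, which yields \eqref{eq:dispersion} at once.

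The only genuinely analytic ingredient is the identification of $\sigma(H)$ with the set of $\lambda$ admitting a bounded solution, equivalently the fibre decomposition itself; this is precisely the Floquet--Bloch theory taken from \cite{kuchment2016overview}, so the step I expect to require the most care is instead the band-edge case $|\Delta_l(\lambda)|=2$, where $M_l(\lambda)$ may fail to be diagonalisable. There one still has a bounded (in fact periodic or anti-periodic) solution, so these values of $\lambda$ do belong to $\sigma(H)$ and satisfy \eqref{eq:dispersion} with $lk\in\{0,\pi\}\bmod 2\pi$, consistently with the closedness of the spectral bands $[a_i,b_i]$; everything else reduces to the elementary trace computation above.
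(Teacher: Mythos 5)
Your proposal is correct, but it does more work than the paper does: the paper offers no proof of this lemma at all, treating it as a standard consequence of the Floquet--Bloch theory quoted just before it (any solution is of Bloch form $e^{ikx}p(x)$ with $e^{ikl}$ an eigenvalue of $M_l(\lambda)$, cited from \cite{kuchment2016overview}) combined with Lemma~\ref{lm:discriminant}; that is exactly the ``alternative'' fibre-decomposition route you sketch in one sentence at the end of your second paragraph. Your main argument is a genuinely different, self-contained route: the trace--determinant algebra from \eqref{eq:det=1} reduces \eqref{eq:dispersion} to $|\Delta_l(\lambda)|\le 2$, a Weyl-sequence construction from a bounded Bloch solution gives $\lambda\in\sigma(H)$ in that case, and the exponential dichotomy plus an explicit Green's function handles $|\Delta_l(\lambda)|>2$, with the band-edge Jordan-block case treated separately via the surviving periodic or anti-periodic solution. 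This buys a proof that does not rely on the direct-integral decomposition and makes the resolvent explicit in the gaps, at the cost of the analytic bookkeeping (boundedness of $y$ and $y'$ for the Weyl sequence, linear independence of the two Floquet solutions, Schur-type bound for the Green's kernel). One small inaccuracy worth fixing: when $|\Delta_l(\lambda)|>2$ the Floquet multiplier $\mu$ may be negative, in which case the solutions are $e^{\pm\gamma x}$ times $2l$-periodic (anti-periodic over $[0,l]$) functions rather than $l$-periodic ones; this does not affect the decay rates or the Green's function construction.
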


Solving \eqref{eq:dispersion} will give us a multiple-valued map $k\in\mathbb{R}\mapsto\lambda\in\mathbb{R}$ with countably many branches. If we label the branches in increasing order as $\lambda_1(k)\leq\lambda_2(k)\leq\dots$, then the image of the $i$th branch corresponds to the $i$th spectral band $[a_i,b_i]\in\sigma(H)$. A plot of these branches is called a band diagram. Its projection onto the $\lambda$-axis gives the spectrum of $H$. An example is shown in Figure \ref{fig:band_diagram_example}.

Note that \eqref{eq:dispersion} is $\frac{2\pi}{l}$-periodic, and so are the branches. This reduces the range of $k$ to the Brillouin zone $[-\frac{\pi}{l},\frac{\pi}{l}]$. Moreover, as shown in Figure \ref{fig:band_diagram_example}, \eqref{eq:dispersion} is even, so it suffices to consider $k$ in the reduced Brillouin zone $[0,\frac{\pi}{l}]$.

\begin{figure}
    \centering
    \includegraphics[width=0.4\linewidth]{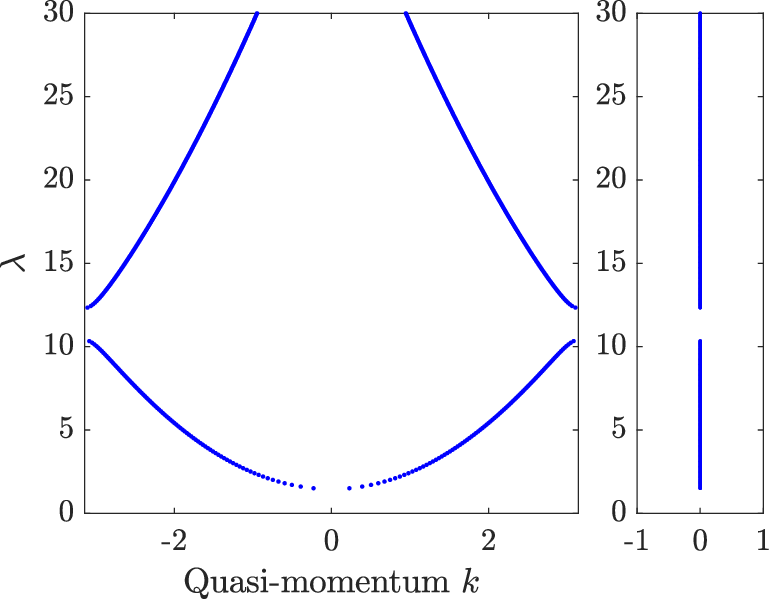}
    \caption{The band diagram of a $1$-periodic piecewise linear potential on the Brillouin zone $[-\pi,\pi]$. Only parts of the first two spectral bands are shown, and countably many more bands exist along the $\lambda$-axis. On the right is the projection on to the $\lambda$-axis, which coincides with the spectrum of the Hill operator $H$.}
    \label{fig:band_diagram_example}
\end{figure}

\subsubsection{Density of states}

The value of reformulating the problem in terms of periodic differential operators is that the associated density of states is well understood, giving us a ready-made tool to keep track of where states are likely to fall within the spectrum.

The density of states of a periodic operator can be defined in terms of the limit of the spectra on finite-sized domains. Consider a sequence of expanding finite-measure domains $A_N\subset\mathbb{R}$ that tend to cover the whole space $\mathbb{R}$ as $N\rightarrow\infty$. On each $A_N$ with finite measure, $H$ has a discrete spectrum with countably many eigenvalues $\lambda_i$. The integrated density of states is defined to be the limit of the counting functions of eigenvalues:
\begin{equation*}
    \rho(\lambda)\coloneqq\lim_{N\rightarrow\infty}\frac{1}{|A_N|}\#\{\lambda_i<\lambda\}.
\end{equation*}
Under appropriate assumptions on the operator and the boundary conditions, this limit is well-defined. For more details see the review \cite{kuchment2016overview} and references therein. The density of states is then calculated as the Radon-Nikodym derivative of $\rho$ with respect to the Lebesgue measure $L$:
\begin{equation*}
    p(\lambda) \coloneqq \frac{\de\rho}{\de L}.
\end{equation*}

Similarly, we could consider the density of the discriminants $\Delta_l$ in $[-2,2]$, which is related to the density of states by the function $\Delta_l(\lambda)$. It was shown in \cite{davies2024vanhove} that this density can be derived by differentiating the dispersion relation \eqref{eq:dispersion} with respect to $k$. This density is shown in Figure~\ref{fig:delta_density} and turns out to be common for all Hill operators, independent of the potential $V$ and the period $l$:
\begin{equation} \label{eq:delta_density}
    D(\Delta)=\frac{1}{\pi}\frac{1}{\sqrt{4-\Delta^2}} \quad\text{for $-2<\Delta<2$.}
\end{equation}

\begin{figure}
    \centering
    \includegraphics[width=0.55\linewidth]{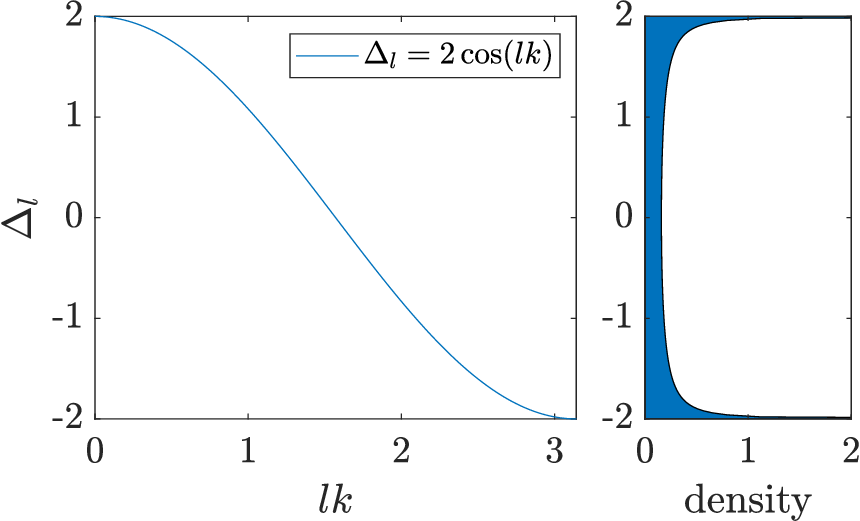}
    \caption{The dispersion relation \eqref{eq:dispersion} shown on the left, relating the discriminant $\Delta_l$ and the normalised quasi-momentum $lk$. The corresponding density of $\Delta_l$, given by \eqref{eq:delta_density}, is shown on the right. In this way, we can think of $\Delta_l$ as a function of $lk$.} 
    \label{fig:delta_density}
\end{figure}

\section{Link between recursion relation and operators} \label{sec:link}

The density of the discriminants \eqref{eq:delta_density} is our main motivation for studying recursion relations via the Hill operator. Since eigenstates of the Hill operator can be related to states of the recursion relation via the discriminant, we can use our knowledge of this density to predict the statistics of the associated nonlinear recursion relation. In Section \ref{sec:candidate_invariance}, we will show how the density of the discriminants \eqref{eq:delta_density} is  related to the invariant measure of the logistic map \eqref{eq:logistic_invariant_measure} through a change of variables. This will reveal how features of the invariant measure are related to those of the density of states of the periodic operators.  For example, the typical concentration of values of the recursion relation near the edges of the interval is related to the Van Hove singularities that occur in the density of states at the edges of the spectral bands \cite{davies2024vanhove, van1953occurrence}. Our method gives a route to transfer this knowledge and intuition to dynamical systems.

To construct the logistic map from the Hill operator, consider any $1$-periodic potential $V$. This $V$ is also $l$-periodic for any $l\in\mathbb{N}$, so we could apply the analysis in Section \ref{sec:periodic_differential_operators} repeatedly for different unit cell length $l$. In particular, consider the sequence $l_n=2^n, n\in\mathbb{N}$ and their monodromy matrices $M_{2^n}$. Recall from Remark~\ref{rmk:monodromy_action} that the monodromy matrix for a segment $[0,l]$ maps the vector $[y(0);y'(0)]$ to $[y(l);y'(l)]$. Since $H$ is 1-periodic,
\begin{equation} \label{eq:monodromy_periodicity}
    M_{k+l}=M_kM_l\quad\text{for all $k,l\in\mathbb{N}$.}
\end{equation}
In particular, we have the following relation
\begin{equation*}
    M_{2^{n+1}} = M_{2^n}^2\quad\text{for all $n\in\mathbb{N}$}.
\end{equation*}

Consider the corresponding sequence of discriminants $\Delta_{2^n}=\tr M_{2^n}$. For a general 2-by-2 matrix $A$, we have the identity
\begin{equation*}
    \tr(A^2) = \tr(A)^2 - 2\det(A).
\end{equation*}
Recall that $\det M_l\equiv1$ from \eqref{eq:det=1}, so we have the following recursion relation
\begin{equation*}
    f_2(\Delta_{2^n})\coloneqq\Delta_{2^{n+1}}=\Delta_{2^n}^2-2\quad\text{for all $n\in\mathbb{N}$.}
\end{equation*}
This is a particular case of the recursion relation $z\rightarrow z^2-c$ that is used to define the Mandelbrot set \cite{mandelbrot2004fractals}, so we apply the well-known conjugacy (change of variables) between the logistic map and the Mandelbrot set:
\begin{equation} \label{eq:logistic_mandelbrot_conjugacy}
    x_n=\frac{1}{4}(2-\Delta_{2^n}),\quad\Delta_{2^n}=2-4x_n.
\end{equation}
Then $(x_n)$ will satisfy the logistic map with $r=4$: $x_{n+1}=4x_n(1-x_n)$ for all $n\in\mathbb{N}$. That is, we have constructed a sequence of Hill operators (with artificially doubled unit cells) that has an associated sequence of discriminants which satisfies the logistic map.

\subsection{Candidate invariant measure} \label{sec:candidate_invariance}

If we apply the change of variables \eqref{eq:logistic_mandelbrot_conjugacy} to the density of states \eqref{eq:delta_density}, then it coincides with the well-known invariant measure of the logistic map with $r=4$ \eqref{eq:logistic_invariant_measure}:
\begin{equation*}
    D(2-4x)\cdot\left|\frac{\de\Delta}{\de x}\right|=\frac{1}{\pi}\frac{1}{\sqrt{4-(2-4x)^2}}\cdot 4=\frac{1}{\pi}\frac{1}{\sqrt{x(1-x)}}=q(x).
\end{equation*}

We propose that for any recursion relation that can be constructed from the discriminants of Hill operators, one should consider the density of states \eqref{eq:delta_density} as a candidate invariant measure. For example, \cite{davies2024vanhove} constructed a 3\textsuperscript{rd} degree recursion relation using Fibonacci tiling potentials. When restricted to the interval $[-2,2]$, this recursion relation does converge to the invariant measure \eqref{eq:delta_density} numerically. In the next section, we will also construct the generalised logistic maps by further exploiting the inflating unit cells. It turns out that they all have the same invariant measure \eqref{eq:delta_density}, corroborating our hypothesis.

\subsection{Generalised logistic maps} \label{sec:generalised_logistic}

Instead of considering the sequence $(l_n=2^n:{n\in\mathbb{N}})$ that doubles at each recursion, we may take any $m\in\mathbb{N}$ and consider the sequence $(l_n=m^n:{n\in\mathbb{N}})$. Again, thanks to the periodicity of $H$ \eqref{eq:monodromy_periodicity}, we have
\begin{equation} \label{eq:MatrixPowers}
    M_{m^{n+1}} = M_{m^n}^m\quad\text{for all $m,n\in\mathbb{N}$}.
\end{equation}
The question is then to find the trace of an integer power of a matrix. The general formula of $\tr(A^m)$ as a function of $\tr (A)$ and $\det (A)$ for 2-by-2 matrices was shown in \cite{pahade2015trace} to be given by
\begin{align*}
    \begin{split}
        \tr(A^m)=\sum_{r=0}^{\lfloor m/2 \rfloor}\frac{(-1)^r}{r!}m(m-(r+1))(m-(r+2))\cdots (\mbox{up to r terms})\cdot \det(A)^r\cdot \tr(A)^{m-2r},
    \end{split}
\end{align*}
for $m\in\mathbb{N}$. Recalling that monodromy matrices always satisfy $\det M_l\equiv1$, we may define a collection of maps which will be satisfied by the discriminants of the exponentiated matrices \eqref{eq:MatrixPowers}. We will call these the \emph{generalised logistic maps} for each degree $m\in\mathbb{N}$, which are given by
\begin{equation} \label{eq:generalised_logistic_def}
    f_m(x) =  \sum_{r=0}^{\lfloor m/2 \rfloor} \frac{(-1)^r}{r!}m(m-(r+1))(m-(r+2))\cdots (\mbox{up to r terms})\cdot x^{m-2r},
\end{equation}
so that the sequence $(\Delta_{m^n}:{n\in\mathbb{N}})$ follows the recursion $f_m$:
\begin{equation*} 
    \Delta_{m^{n+1}} = f_m(\Delta_{m^n})\quad\text{for all $n\in\mathbb{N}$.}
\end{equation*}

The generalised logistic maps $f_m$ are monic polynomials of degree $m$ that map $[-2,2]$ to $[-2,2]$. The first few are shown in Figure \ref{fig:fm} and are given by
\begin{align*}
    f_2(x)&=x^2-2\\
    f_3(x)&=x^3-3x\\
    f_4(x)&=x^4-4x^2+2\\
    f_5(x)&=x^5-5x^3+5x
\end{align*}
It can be verified that for all $m\geq2$, our proposed candidate invariant measure \eqref{eq:delta_density} is indeed the unique absolutely continuous invariant measure of $f_m$.

\begin{figure}
    \centering
    \includegraphics[width=0.5\linewidth]{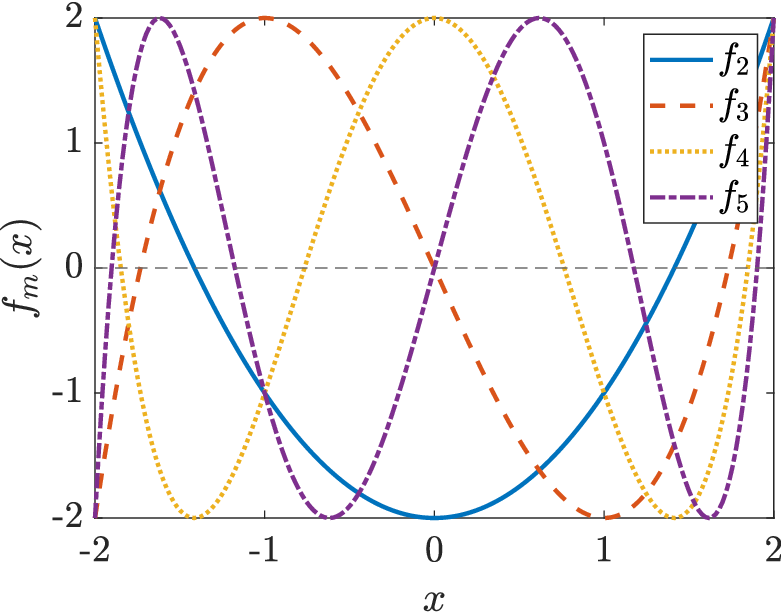}
    \caption{The first few generalised logistic maps. Each $f_m$ is a monic polynomial of degree $m$ that maps the interval $[-2,2]$ to $[-2,2]$. Its $m$ roots are all real and in the interval $(-2,2)$.} 
    \label{fig:fm}
\end{figure}

\subsection{Convergence to invariant measure} \label{sec:convergence}

Reformulating the dynamical system in terms of a sequence of Hill operators is useful not only for quickly generating candidate invariant measures, as discussed above, but can also be used to produce a direct proof of convergence. Our main convergence result for the logistic map is the following:
\begin{theorem} \label{thm:logistic_convergence}
    Suppose $X_0$ have an absolutely continuous probability distribution with density function $p_0\in L^1[0,1]$. Let $p_n$ be the probability density function of $X_n = F^n(X_0)$, where $F$ is the logistic map \eqref{eq:logistic_map_def} with $r=4$. Then
    \begin{equation*}
        p_n \rightarrow q\text{ as $n\rightarrow\infty$}\quad\text{in $ L^1[0,1]$,}
    \end{equation*}
    where $q$ is the invariant density \eqref{eq:logistic_invariant_measure}.

    Moreover, if $p_0\in BV[0,1]$ the space of bounded variation functions on $[0,1]$, then the convergence rate is at least $O(2^{-n})$:
    \begin{equation*}
        \lVert p_n-q\rVert_{ L^1[0,1]}\leq \frac{c}{2^{n}}\quad\text{for all $n\in\mathbb{N}$}, 
    \end{equation*}
    where $c$ is a constant depending on $p_0$.
\end{theorem}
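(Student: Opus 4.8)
The plan is to exploit the key geometric observation stated in the introduction: doubling the unit cell from $[0,2^n]$ to $[0,2^{n+1}]$ folds the spectral band structure in half. Concretely, let me track the distribution of the discriminant $\Delta_{2^n}$ rather than $x_n$ directly, since the change of variables \eqref{eq:logistic_mandelbrot_conjugacy} is a smooth bijection $[0,1]\to[-2,2]$ with bounded derivative in both directions, so $L^1$ convergence (and $BV$ control) transfers between the two pictures at the cost of a fixed constant. Think of $\Delta_{2^n}$ as a function of the normalised quasi-momentum $\theta = 2^n k \in [0,2\pi]$ via $\Delta_{2^n} = 2\cos\theta$ along each band; the dispersion relation \eqref{eq:dispersion} says exactly this. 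The map $\Delta_{2^n}\mapsto \Delta_{2^{n+1}} = \Delta_{2^n}^2 - 2$ corresponds, under $\Delta = 2\cos\theta$, to the doubling map $\theta\mapsto 2\theta$ on the circle $\mathbb{R}/2\pi\mathbb{Z}$, since $2\cos(2\theta) = (2\cos\theta)^2 - 2$. So I would reformulate the whole problem as: push forward an absolutely continuous density under the doubling map on the circle, and show $L^1$ convergence to the uniform density, with rate. The candidate invariant density \eqref{eq:delta_density} is precisely the pushforward of the uniform measure on $\theta$ under $\theta\mapsto 2\cos\theta$.

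The core step is then the classical fact that the doubling map on the circle mixes absolutely continuous densities to uniform in $L^1$. Let me spell out the mechanism I would use. If $g_n$ is the density of $\theta_n$ on $[0,2\pi]$ (normalised), then the transfer (Perron--Frobenius) operator for $\theta\mapsto 2\theta$ acts by $(\mathcal{L}g)(\theta) = \tfrac12\big(g(\theta/2) + g(\theta/2 + \pi)\big)$, i.e. averaging the density over the two preimages. I would prove $\|\mathcal{L}g - 1\|_{L^1} \le \tfrac12 \|g - 1\|_{L^1}$ is \emph{not} quite what happens — rather, iterating, $\mathcal{L}^n g$ is the average of $g$ over the $2^n$ points $\theta/2^n + 2\pi j/2^n$, $j = 0,\dots,2^n-1$, which is exactly the conditional expectation of $g$ onto the $\sigma$-algebra generated by the partition into $2^n$ equal arcs. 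For a general $g\in L^1$ this converges to $\int g = 1$ in $L^1$ by the martingale convergence theorem (or by density of continuous functions, for which uniform continuity gives the convergence directly and the $L^1$-contractivity of $\mathcal{L}$ extends it to all of $L^1$). When $g\in BV$, the oscillation of $g$ over each arc of length $2\pi/2^n$ is controlled by the total variation restricted to that arc, so summing gives $\|\mathcal{L}^n g - 1\|_{L^1} \le C\,\mathrm{Var}(g)\,2^{-n}$, which is the claimed $O(2^{-n})$ rate. Transporting back through $\theta\mapsto\Delta\mapsto x$ (a fixed bi-Lipschitz-away-from-endpoints change of variables whose Jacobian factors are the very weights appearing in \eqref{eq:delta_density} and \eqref{eq:logistic_invariant_measure}) yields $p_n\to q$ in $L^1[0,1]$ with the same rate, the endpoint blow-up of $q$ being harmless since it is integrable and comes from the smooth density on $\theta$.

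One subtlety I would need to handle carefully is the relationship between "the density of $X_n$" and "the pushforward under the doubling map": a priori $p_0$ is an arbitrary density on $[0,1]$, and I must check that its image under \eqref{eq:logistic_mandelbrot_conjugacy} followed by the inverse of $\Delta = 2\cos\theta$ is a well-defined density on the circle (the map $\theta\mapsto 2\cos\theta$ is two-to-one, so I lift $p_0$ to an even density $g_0$ on $[0,2\pi]$, and the doubling map on the circle commutes with this folding — this is the standard semiconjugacy between the doubling map and the Chebyshev/logistic map, and I should state it cleanly). The main obstacle, and the place where the "direct proof via operators" earns its keep, is precisely establishing this semiconjugacy dictionary rigorously at the level of $L^1$ densities — once that is in place, the doubling-map mixing estimate and the $BV$ rate are short and standard. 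A cleaner alternative that avoids the circle entirely: work directly with the folding map $\Delta\mapsto\Delta^2-2$ on $[-2,2]$, whose transfer operator is $(\mathcal{K}h)(\Delta) = \tfrac{1}{2\sqrt{\Delta+2}}\big(h(\sqrt{\Delta+2}) + h(-\sqrt{\Delta+2})\big)$; changing variables to $w = 2\cos\theta$ turns $\mathcal{K}$ into the symmetric averaging operator above, and one shows $\mathcal{K}^n h \to D$ in $L^1[-2,2]$ with rate $2^{-n}$ on $BV$ inputs by the same partition-refinement argument, now phrased as: $\mathcal{K}^n h \,\mathrm{d}\Delta$ equals the pushforward of the conditional expectation of the $\theta$-density onto $2^n$ equal arcs — I would present whichever of these two equivalent routes reads most transparently given the band-folding picture emphasised in the introduction.
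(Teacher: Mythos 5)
Your proposal is correct in substance and rests on the same underlying mechanism as the paper -- semiconjugating the quadratic recursion to a uniformly expanding piecewise-linear folding, getting the $2^{-n}$ rate from variation over a partition into $2^n$ equal pieces, and handling general $L^1$ densities by approximation plus the $L^1$-contractivity of the transfer operator -- but the implementation differs. The paper never iterates a transfer operator on the circle: it lifts the initial density to the spectral variable $\Lambda\in[0,\infty)$ (choosing the first band), applies a single folding map $K_l$ per unit-cell length, and proves the rate by approximating $BV$ densities with \emph{rational step functions of width $1/l$}, which $Q_{K_l}$ annihilates exactly (Lemmas~\ref{lm:Rstep_vanish} and~\ref{lm:Rstep_approx}); general $L^1$ densities are handled by truncation and step-function approximation with DCT/MCT. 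Your route instead iterates the doubling map's Perron--Frobenius operator $\mathcal{L}$ on the circle and bounds $\lvert\mathcal{L}^n g(\theta)-\tfrac{1}{2\pi}\int g\rvert$ by the sum of oscillations of $g$ over the $2^n$ arcs; this is a clean Riemann-sum argument and arguably shorter for the $m=2$ case, while the paper's $K_l$ formulation generalises uniformly to all inflation ratios $m$ and ties the proof directly to the band-folding picture. Two points to repair in your write-up: (i) $\mathcal{L}^n g(\theta)$ is the \emph{point} average of $g$ over the $2^n$ preimages of $\theta$, not the conditional expectation onto the arc partition (those are integral averages), so the martingale-convergence route does not apply as stated; your parenthetical alternative (uniform continuity for continuous $g$, then density in $L^1$ plus $\lVert\mathcal{L}\rVert\leq1$) is the argument to keep, and your $BV$ bound survives unchanged because a point value differs from the arc average by at most the oscillation on that arc. (ii) For the rate you must verify that the even lift $g_0(\theta)\propto p_0\bigl(\tfrac14(2-2\cos\theta)\bigr)\lvert\sin\theta\rvert$ is itself of bounded variation when $p_0\in BV[0,1]$; this is true because the Jacobian factor is smooth and bounded, but it is exactly where the constant $c$ picks up a dependence on both the variation and the supremum of $p_0$ (the paper isolates this in Remark~\ref{rmk:constant_initial}), so it should be stated rather than absorbed into ``bi-Lipschitz away from endpoints.''
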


We have the analogous results for the generalised logistic maps, with the domain changed to the interval $[-2,2]$ instead of $[0,1]$.
\begin{theorem} \label{thm:generalised_logistic_convergence}
    Suppose $X_0$ have an absolutely continuous probability distribution with density function $p_0\in L^1[-2,2]$. Let $p_n^m$ be the probability density function of $X_n = f_m^n(X_0)$ where $f_m$ is the generalised logistic map \eqref{eq:generalised_logistic_def}. Then
    \begin{equation*}
        p_n^m \rightarrow D\text{ as $n\rightarrow\infty$}\quad\text{in $ L^1[-2,2]$ for all $m\in\mathbb{N}$,}
    \end{equation*}
    where $D$ is the density of states \eqref{eq:delta_density}.

    Moreover, if $p_0\in BV[-2,2]$ the space of bounded variation functions on $[-2,2]$, then the convergence rate is at least $O(m^{-n})$:
    \begin{equation} \label{eq:exponential_convergence}
        \lVert p_n^m-D\rVert_{ L^1[-2,2]}\leq \frac{\Tilde{c}}{m^{n}}\quad\text{for all $m,n\in\mathbb{N}$,}
    \end{equation}
    where $\Tilde{c}$ is a constant depending only on $p_0$.
\end{theorem}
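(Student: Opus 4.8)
The plan is to conjugate the generalised logistic map $f_m$ on $[-2,2]$ to the angle--multiplication map $\theta\mapsto m\theta$ on the circle $\mathbb{R}/2\pi\mathbb{Z}$, on which the candidate density \eqref{eq:delta_density} becomes the (maximally regular) uniform density. The right substitution is $\Delta = 2\cos\theta$, which is precisely the one appearing in the dispersion relation \eqref{eq:dispersion} with $\theta = lk$. Since $f_m$ (given by \eqref{eq:generalised_logistic_def}) satisfies $\tr(A^m) = f_m(\tr A)$ for every $2\times2$ matrix $A$ with $\det A = 1$, applying this to a rotation by $\theta$ (unit determinant, trace $2\cos\theta$, $m$-th power a rotation by $m\theta$) gives the identity $f_m(2\cos\theta) = 2\cos(m\theta)$ — equivalently, $f_m$ is the rescaled Chebyshev polynomial discussed in Section~\ref{sec:conjugacy}. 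Thus $m$-tupling the unit cell, which multiplies the phase $lk$ by $m$ and folds it back into the reduced Brillouin zone $[0,\pi]$, is exactly the expanding circle map factored through the two-to-one map $\theta\mapsto 2\cos\theta$; this makes the band-folding heuristic of Section~\ref{sec:convergence} quantitative.

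Next I would reduce the density evolution to the circle. Given $X_0$ with density $p_0\in L^1[-2,2]$, set $Y_0 = \varepsilon\arccos(X_0/2)$ with $\varepsilon=\pm1$ an independent fair sign; then $X_0 = 2\cos Y_0$ and $Y_0$ has the even density $g_0(\theta) = p_0(2\cos\theta)\,|\sin\theta|$ on $\mathbb{R}/2\pi\mathbb{Z}$, with $\int g_0 = 1$ by the change of variables $x = 2\cos\theta$. Putting $Y_n = m^n Y_0 \bmod 2\pi$ gives $2\cos Y_n = f_m^{\,n}(2\cos Y_0) = X_n$, so $X_n = 2\cos Y_n$; and $Y_n$ has density $g_n = \mathcal{L}^n g_0$, where $\mathcal{L}$ is the transfer operator of $\theta\mapsto m\theta$, namely $(\mathcal{L}g)(\theta) = m^{-1}\sum_{j=0}^{m-1} g\big(\tfrac{\theta+2\pi j}{m}\big)$. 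Since $\mathcal{L}$ commutes with $\theta\mapsto-\theta$, each $g_n$ stays even, so $X_n$ has density $p_n^m(x) = 2\,g_n(\arccos(x/2))/\sqrt{4-x^2}$, and the substitution $x = 2\cos\theta$ yields the exact identity
\[
    \lVert p_n^m - D\rVert_{L^1[-2,2]} = \lVert g_n - \tfrac{1}{2\pi}\rVert_{L^1(\mathbb{R}/2\pi\mathbb{Z})}.
\]
In other words the conjugacy is an $L^1$-isometry between even probability densities on the circle and densities on $[-2,2]$, carrying the uniform density onto $D$, so it remains to prove $\mathcal{L}^n g_0\to\tfrac{1}{2\pi}$ in $L^1$.

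For this I would use the elementary Riemann-sum estimate for expanding circle maps: iterating, $(\mathcal{L}^n g_0)(\theta) = m^{-n}\sum_{j=0}^{m^n-1} g_0\big(\tfrac{\theta+2\pi j}{m^n}\big)$ is exactly $\tfrac{1}{2\pi}$ times an $m^n$-point equally spaced Riemann sum of $\int_0^{2\pi} g_0 = 1$, so when $g_0\in BV$,
\[
    \lVert \mathcal{L}^n g_0 - \tfrac{1}{2\pi}\rVert_\infty \le \frac{\mathrm{Var}(g_0)}{m^n}, \qquad\text{hence}\qquad \lVert \mathcal{L}^n g_0 - \tfrac{1}{2\pi}\rVert_{L^1} \le \frac{2\pi\,\mathrm{Var}(g_0)}{m^n},
\]
which is the rate \eqref{eq:exponential_convergence} with $\tilde c = 2\pi\,\mathrm{Var}(g_0)$. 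One checks that $p_0\in BV[-2,2]$ forces $g_0\in BV$: $\theta\mapsto 2\cos\theta$ is a smooth monotone bijection $[0,\pi]\to[-2,2]$ so $p_0(2\cos\theta)$ has the same variation as $p_0$, multiplying by the $C^1$ factor $\sin\theta$ keeps it $BV$, and the even extension is continuous across $\theta=0,\pi$ because $\sin$ vanishes there — so $\mathrm{Var}(g_0)$, and hence $\tilde c$, depends only on $p_0$. For general $p_0\in L^1$, approximate in $L^1$ by a $BV$ probability density $h$; since $\mathcal{L}$ is an $L^1$-contraction fixing $\tfrac{1}{2\pi}$, $\lVert \mathcal{L}^n g_0 - \tfrac{1}{2\pi}\rVert_{L^1} \le \lVert g_0 - h\rVert_{L^1} + 2\pi\,\mathrm{Var}(h)/m^n$ can be made arbitrarily small, giving the $L^1$ convergence. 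Theorem~\ref{thm:logistic_convergence} then follows as the case $m=2$ composed with the affine conjugacy \eqref{eq:logistic_mandelbrot_conjugacy}, which is itself an $L^1$-isometry preserving $BV$ with the same variation.

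The main obstacle is the bookkeeping of the conjugacy rather than any single hard estimate: one must correctly handle the two-to-one folding at the endpoints $\pm2$ (dealt with by the symmetrisation defining $Y_0$) and verify that the $BV$ class and its variation transfer cleanly under $x = 2\cos\theta$ despite the Jacobian $\sqrt{4-x^2}$ degenerating at $x=\pm2$. That degeneracy is precisely what converts the regular uniform density on the circle into the density $D$, singular at $\pm2$, and is exactly why the direct $BV$ theory on $[-2,2]$ fails while the argument on the circle succeeds.
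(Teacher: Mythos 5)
Your proposal is correct, and it proves the theorem by a recognizably parallel but differently organized route. The paper also linearizes through the cosine substitution, but it stays in the operator picture: it works with the normalised eigenvalue $\Lambda\in[0,\infty)$ and the folding maps $K_l$, and its key quantitative lemma is the approximation of a $BV$ density by rational step functions of width $1/l$, which $Q_{K_l}$ sends exactly to the uniform density (Lemmas~\ref{lm:Rstep_vanish} and~\ref{lm:Rstep_approx}, Theorem~\ref{thm:bv_convergence_rate}); general $L^1$ densities are then handled by the same contraction-plus-approximation argument you use (Theorem~\ref{thm:general_convergence}), and the result is pulled back to $[-2,2]$ via the invariance of the $L^1$/total-variation distance under $\Delta=2\cos(\pi\kappa)$. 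You instead bypass the operators entirely: you conjugate $f_m$ to the $m$-tupling map on the circle, resolve the two-to-one cosine by the random-sign symmetrisation (the paper instead fixes the first spectral branch, cf.\ Remark~\ref{rmk:constant_initial}), and your key estimate is the classical Riemann-sum/Koksma-type bound applied to the explicit formula for the iterated transfer operator, which even gives the stronger uniform bound $\lVert \mathcal{L}^n g_0 - \tfrac{1}{2\pi}\rVert_\infty \le \mathrm{Var}(g_0)/m^n$ before integrating to the stated $L^1$ rate. The two mechanisms are the same at heart --- folding the spectral bands is exactly averaging over the $m^n$ inverse branches --- but your packaging is more elementary and self-contained, at the price of losing the density-of-states interpretation (and the treatment of arbitrary cell lengths $l$, not just $l=m^n$) that motivates the paper's construction. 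Two small points of care: state the bound with the essential variation of a suitable representative of $g_0$, since the pointwise Riemann-sum estimate needs a good version of an a.e.-defined density (the paper does this with $\ess V$), and note that your constant $\tilde c = 2\pi\,\mathrm{Var}(g_0)$ can be bounded in terms of $V(p_0)$ and $\sup p_0$ exactly as in Remark~\ref{rmk:constant_initial}, so it indeed depends only on $p_0$.
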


Our proof of Theorems \ref{thm:logistic_convergence} \& \ref{thm:generalised_logistic_convergence} will be based on the spectral band diagram of the free Hill operator with $V\equiv0$. We briefly survey its properties, before presenting the proof. 

\subsubsection{The free Hill operator}

To plot the diagram, we need to solve the dispersion relation \eqref{eq:dispersion}. We start by calculating $\Delta_l(\lambda)$. For the free operator, the eigenvalue problem $Hy = \lambda y$ is solved by $y(x) = \sin{(\sqrt{\lambda} x + \alpha)}$ when $\lambda\geq0$, for some phase constant $\alpha$. Then the basis solutions $\varphi_\lambda$, $\psi_\lambda$ satisfying the boundary conditions \eqref{eq:basis_solutions_bc} are
\begin{equation*}
    \varphi_\lambda(x)=\cos(\sqrt{\lambda} x)\quad\text{and}\quad \psi_\lambda(x)=\frac{1}{\sqrt{\lambda}}\sin(\sqrt{\lambda} x),
\end{equation*}
so the monodromy matrix is
\begin{equation*}
    M_l(\lambda)=\begin{bmatrix}
\varphi_\lambda(l) & \psi_\lambda(l) \\
\varphi_\lambda'(l) & \psi_\lambda'(l) 
\end{bmatrix}
= \begin{bmatrix}
\cos(l\sqrt{\lambda}) & \frac{1}{\sqrt{\lambda}}\sin(l\sqrt{\lambda}) \\
-\sqrt{\lambda}\sin(l\sqrt{\lambda}) & \cos(l\sqrt{\lambda})
\end{bmatrix},
\end{equation*}
and the discriminant is
\begin{equation*}
    \Delta_l(\lambda)= \tr M_l=2\cos(l\sqrt{\lambda}).
\end{equation*}
Then, the dispersion relation \eqref{eq:dispersion} becomes
\begin{equation} \label{eq:free_dispersion}
    2\cos(l\sqrt{\lambda}) = \Delta_l(\lambda) = 2\cos(lk)\quad\text{for all $l\in\mathbb{N}$.}
\end{equation}
For each $l\in\mathbb{N}$, \eqref{eq:free_dispersion} can be solved by the band functions
\begin{equation} \label{eq:free_band_functions}
    \lambda_i^l(k) = \left(k+\frac{2\pi}{l} i\right)^2\quad\text{for $i\in\mathbb{Z}$.}
\end{equation}

Recall that in our construction, we consider a sequence of expanding unit cells with length $l_n = m^n$. Take $m=2$ for an example. Then \eqref{eq:free_band_functions} gives us a sequence of band diagrams for each $l_n=2^n$, $n\in\mathbb{N}$. Figure~\ref{fig:folding}(a)-(c) shows the band diagrams for $l=1,2,4$. 

\begin{figure}
    \centering
    \includegraphics[width=0.8\linewidth]{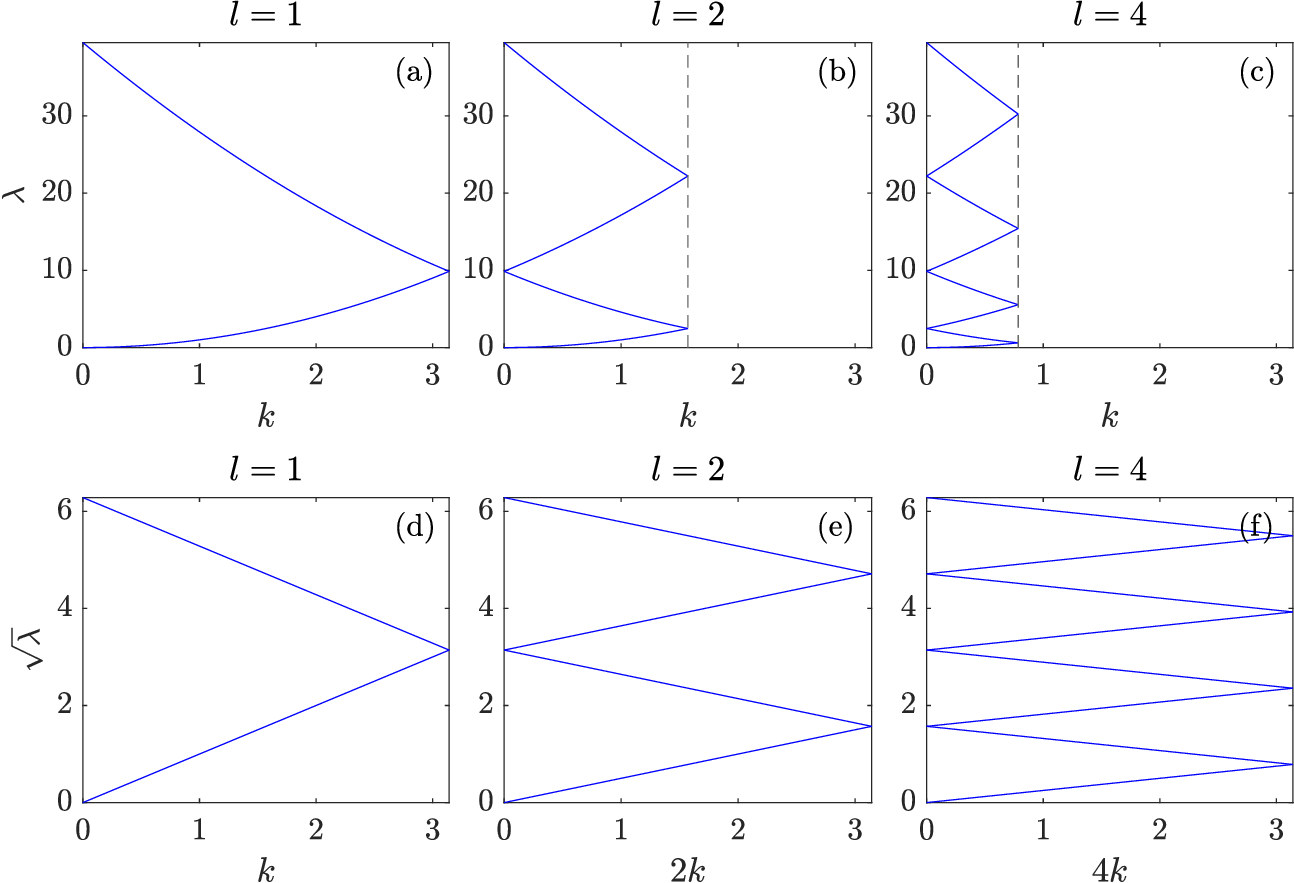}
    \caption{(a)-(c) show the band diagrams of the free Hill operator when $l=1,2,4$, with the quasi-momentum in the respective reduced Brillouin zone. Only the first few bands are shown in each diagram. (d)-(f) show the normalised band diagrams, where the normalised quasi-momentum $lk$ is shown on the $x$-axis and the square root of the eigenvalue on the $y$-axis.}
    \label{fig:folding}
\end{figure}

The crucial idea is that after doubling the length of the unit cell, the band diagram is folded along the middle to fit the halved reduced Brillouin zone $[0,\frac{\pi}{2^n}]$. This can be made more obvious by putting $\sqrt{\lambda}$ in the vertical axis to linearize the bands, as shown in Figure~\ref{fig:folding}(d)-(f), where we also use $lk$ for the horizontal axis to normalise the reduced Brillouin zone to $[0,\pi]$. To formalise this observation, we will use normalised variables $\kappa_l = lk/\pi$ and $\Lambda=\sqrt{\lambda}/\pi$, so that the reduced Brillouin zone is fixed as $[0,1]$. Then, we have a normalised dispersion relation
\begin{equation} \label{eq:normalised_free_dispersion}
    \Delta_l = 2\cos(\pi\kappa_l),
\end{equation}
We also define the linear map shown in Figure~\ref{fig:folding}(d)-(f). Note that for each fixed $l$, there is a well-defined map $\lambda\mapsto k$ that is the inverse of the band functions $\lambda_i(k)$, so also a normalised map $\Lambda\mapsto\kappa_l$. We write this as $K_l:[0,\infty)\rightarrow[0,1]$ such that
\begin{equation*}
    K_l(\Lambda)=\kappa_l.
\end{equation*}
$K_l$ is piecewise linear thanks to the normalisation. Moreover, as we increase $l$, $K_l$ is piecewise linear on finer intervals. This is exactly the result of folding.

\begin{lemma}
    For all $l\in\mathbb{N}$, $K_l$ is linear and bijective on each interval $[\frac{i}{l},\frac{i+1}{l}]$, for $i\in\mathbb{N}$.
\end{lemma}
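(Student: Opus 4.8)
The plan is to work directly from the explicit band functions \eqref{eq:free_band_functions} for the free operator, since the statement concerns only this operator. First I would fix $l\in\mathbb{N}$ and note that the $i$th band function in the reduced Brillouin zone $[0,\tfrac{\pi}{l}]$ is $\lambda = (k + \tfrac{\pi}{l} i)^2$ for $i\in\mathbb{N}$ (folding the branches of \eqref{eq:free_band_functions} into $[0,\tfrac{\pi}{l}]$), so that $\sqrt{\lambda}$ ranges over $[\tfrac{\pi}{l} i, \tfrac{\pi}{l}(i+1)]$ as $k$ ranges monotonically over the reduced Brillouin zone. In the normalised variables $\Lambda = \sqrt{\lambda}/\pi$ and $\kappa_l = lk/\pi$, this says precisely that on the $i$th band, $\Lambda$ runs over $[\tfrac{i}{l}, \tfrac{i+1}{l}]$ while $\kappa_l$ runs over $[0,1]$, and the correspondence is affine: $\kappa_l = l\Lambda - i$ when $i$ is even and $\kappa_l = i+1 - l\Lambda$ when $i$ is odd (the alternation coming from the direction of the fold). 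In either case $K_l$ restricted to $[\tfrac{i}{l}, \tfrac{i+1}{l}]$ is an affine map of slope $\pm l$ onto $[0,1]$, hence linear and bijective, which is exactly the claim.

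Concretely, the key steps in order would be: (i) recall from the Floquet--Bloch discussion that for each fixed $l$ the map $\Lambda\mapsto\kappa_l$ is well-defined as the inverse of the band functions, reduced to $[0,1]$; (ii) invert $\Delta_l(\lambda) = 2\cos(l\sqrt{\lambda}) = 2\cos(\pi l \Lambda)$ against the normalised dispersion relation \eqref{eq:normalised_free_dispersion}, $\Delta_l = 2\cos(\pi\kappa_l)$, to get $\cos(\pi l\Lambda) = \cos(\pi\kappa_l)$; (iii) solve this on the strip $l\Lambda \in [i, i+1]$, $\kappa_l\in[0,1]$, obtaining $\kappa_l = l\Lambda - i$ for even $i$ and $\kappa_l = (i+1) - l\Lambda$ for odd $i$; (iv) observe both formulas are affine with slope $\pm l$ and carry the endpoints $\tfrac{i}{l}, \tfrac{i+1}{l}$ onto $\{0,1\}$, hence are bijections onto $[0,1]$.

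There is no serious obstacle here; the statement is essentially a bookkeeping lemma unpacking what "folding" means. The only point requiring a little care is the alternation of the slope sign between consecutive intervals (even versus odd $i$) and checking that the pieces agree at the shared endpoints $\Lambda = \tfrac{i}{l}$, so that $K_l$ is continuous and genuinely piecewise linear rather than multivalued; this is immediate once one tracks which fold each band comes from. One should also note that bijectivity of $K_l$ on each piece means it is the spectrum-to-quasimomentum map per band, not globally injective in $\Lambda$ — consecutive bands reuse the interval $[0,1]$ of $\kappa_l$-values — but that is exactly the structure needed for the convergence argument in the next section.
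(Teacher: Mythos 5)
Your proposal is correct and follows essentially the same route as the paper: the paper's proof likewise substitutes the free band functions \eqref{eq:free_band_functions} into the normalised variables and obtains the explicit piecewise-affine formula \eqref{eq:K_l} (slope $\pm l$, alternating with the parity of the interval), from which linearity and bijectivity onto $[0,1]$ on each $[\tfrac{i}{l},\tfrac{i+1}{l}]$ are immediate. Your step (ii)--(iii) unpacking of the cosine identity is just a slightly more explicit version of the same computation.
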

\begin{proof}
    This is clear from the nature of the curves in Figure~\ref{fig:folding}(d)-(f). Alternatively, substituting \eqref{eq:free_band_functions} in the definition of normalised variables and $K_l$ gives
    \begin{equation} \label{eq:K_l}
        K_l(x)=\begin{cases}
        lx-2i & \text{if $x\in[\frac{2i}{l},\frac{2i+1}{l}]$ for $i=0,1,\dots$} \\
        -lx+2i & \text{if $x\in[\frac{2i-1}{l},\frac{2i}{l}]$ for $i=1,2,\dots$}
    \end{cases}
    \end{equation}
\end{proof}

To conclude, we translated the recursion of discriminants $(\Delta_{l_n}:n\in\mathbb{N})$ into the language of band diagrams. Given an initial value $\Delta_1$, there is an eigenvalue $\lambda$ that is determined uniquely up to the choice of spectral band and satisfies
\begin{equation} \label{eq:Lambda_Delta_relation}
    2\cos(\sqrt{\lambda})=\Delta_1.
\end{equation}
We fix the choice of band by taking $l=1$ in \eqref{eq:free_dispersion}. This eigenvalue $\lambda$ will represent our initial value. The subsequent values $\Delta_{l_n}(\lambda)$ evolve as the band diagrams get folded whenever the unit cell length is increased. This is visualised most easily by considering the normalised quasi-momentum $\kappa_l$ and normalised eigenvalue $\Lambda$, along with the piecewise linear map $K_l$ between them.

\subsubsection{Frobenius-Perron operator}

Our strategy for proving Theorems~\ref{thm:logistic_convergence} and~\ref{thm:generalised_logistic_convergence} is to show that the distribution of $\kappa_l$ will converge to the uniform distribution on $[0,1]$, and then show that this implies the distribution of $\Delta_l$ converges to \eqref{eq:delta_density} using the normalised dispersion relation \eqref{eq:normalised_free_dispersion} as a change of variables. The norm we will consider is the $ L^1$ norm between density functions.

We start by defining the action of $K_l$ on a density function of a random variable $\Lambda$. This is modified from the definition of the Frobenius-Perron operator in ergodic theory \cite{boyarsky1997laws}.
\begin{definition}
    For a measurable function $\tau:[0,\infty)\rightarrow[0,1]$, define $Q_\tau:  L^1[0,\infty) \rightarrow  L^1[0,1]$ such that for any $p\in L^1[0,\infty)$, $Q_{\tau}p$ is the unique (up to almost everywhere equivalence) function in $ L^1[0,1]$ such that
    \begin{equation*}
        \int_B Q_\tau p \,dL = \int_{\tau^{-1}(B)} p\,dL
    \end{equation*}
    for any measurable set $B\subseteq[0,1]$, where $L$ is the Lebesgue measure. Then $\tau^{-1}(B)\subseteq[0,\infty)$ is measurable.
\end{definition}
\begin{remark} \label{rmk:Q_interpretation}
    If $\Lambda$ is a random variable with probability density $p$, then $\tau(\Lambda)$ has probability density $Q_\tau p$. More specifically, if $p_\Lambda$ is the density of $\Lambda$, then $Q_{K_l}p_\Lambda$ is the density of $\kappa_l=K_l(\Lambda)$.
\end{remark}

We want to prove that $Q_{K_l}p$ converge as $l\rightarrow\infty$ for a class of density functions $p$.

\begin{lemma} \label{lm:Qproperties}
    For all measurable $\tau:[0,\infty)\rightarrow[0,1]$, $Q_\tau$ has the following properties:
    \begin{enumerate}
        \item $Q_\tau$ is linear;
        \item The operator norm $\lVert Q_\tau \rVert \leq1$, so $\lVert Q_{\tau}p\rVert_{ L^1[0,1]}\leq\lVert p\rVert_{ L^1[0,\infty)}$ for any $p\in L^1[0,\infty)$.
    \end{enumerate}
\end{lemma}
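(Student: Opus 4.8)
The plan is to deduce both statements directly from the defining identity $\int_B Q_\tau p\,dL=\int_{\tau^{-1}(B)}p\,dL$, so the first task is to pin down the uniqueness that makes $Q_\tau$ a genuine operator. If $g_1,g_2\in L^1[0,1]$ both satisfy this identity for a given $p$, then $\int_B(g_1-g_2)\,dL=0$ for every measurable $B\subseteq[0,1]$; taking $B=\{g_1\ge g_2\}$ and its complement forces $g_1=g_2$ almost everywhere. (Existence of such a $g$ is what the Definition posits; concretely $B\mapsto\int_{\tau^{-1}(B)}p\,dL$ is a finite signed measure on $[0,1]$, and $g=Q_\tau p$ is its Radon--Nikodym density with respect to $L$ whenever that measure is absolutely continuous, as it is for the piecewise linear folding maps $K_l$ that we actually use.)

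Linearity is then immediate. For $p_1,p_2\in L^1[0,\infty)$ and scalars $a,b$, the function $aQ_\tau p_1+bQ_\tau p_2$ lies in $L^1[0,1]$ and, by linearity of the Lebesgue integral, satisfies $\int_B(aQ_\tau p_1+bQ_\tau p_2)\,dL=a\int_{\tau^{-1}(B)}p_1\,dL+b\int_{\tau^{-1}(B)}p_2\,dL=\int_{\tau^{-1}(B)}(ap_1+bp_2)\,dL$ for every measurable $B$. By the uniqueness just established, $aQ_\tau p_1+bQ_\tau p_2=Q_\tau(ap_1+bp_2)$ almost everywhere, which is property~(1).

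For the norm bound, fix $p\in L^1[0,\infty)$, write $g=Q_\tau p$, and split $[0,1]$ into $B^+=\{g\ge0\}$ and $B^-=\{g<0\}$. Then
\begin{equation*}
\lVert g\rVert_{L^1[0,1]}=\int_{B^+}g\,dL-\int_{B^-}g\,dL=\int_{\tau^{-1}(B^+)}p\,dL-\int_{\tau^{-1}(B^-)}p\,dL\le\int_{\tau^{-1}(B^+)}|p|\,dL+\int_{\tau^{-1}(B^-)}|p|\,dL.
\end{equation*}
Since $B^+$ and $B^-$ partition $[0,1]$ and $\tau$ maps into $[0,1]$, the preimages $\tau^{-1}(B^+)$ and $\tau^{-1}(B^-)$ partition $[0,\infty)$, so the right-hand side equals $\int_0^\infty|p|\,dL=\lVert p\rVert_{L^1[0,\infty)}$. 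Hence $\lVert Q_\tau p\rVert_{L^1[0,1]}\le\lVert p\rVert_{L^1[0,\infty)}$ for all $p$, i.e. $\lVert Q_\tau\rVert\le1$, which is property~(2). (Taking $p\ge0$ shows the integral is in fact preserved, so $\lVert Q_\tau\rVert=1$, but only the inequality is needed later.) There is essentially no serious obstacle in the argument itself: the only genuine subtlety is the well-definedness of $Q_\tau p$ for an arbitrary measurable $\tau$, since a pushforward of an absolutely continuous measure need not be absolutely continuous; this is why the Definition should be read as implicitly restricted to those $\tau$ for which a density exists, a condition comfortably satisfied by all the folding maps $K_l$ in the convergence proof.
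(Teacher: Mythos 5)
Your proof is correct and is essentially what the paper does: the paper simply defers to the standard Frobenius--Perron operator arguments in Section 4.2 of Boyarsky--G\'ora, and your linearity-via-uniqueness and positive/negative set decomposition argument is exactly that standard proof written out in full. Your closing remark about well-definedness (that existence of $Q_\tau p$ requires the pushforward to be absolutely continuous, which holds for the piecewise linear maps $K_l$) is a caveat the paper glosses over, and is correctly handled.
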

\begin{proof}
    We may adapt the proofs of similar properties of the Frobenius-Perron operator $P_\tau$ given in Section 4.2 of \cite{boyarsky1997laws}.
\end{proof}

\subsubsection{Functions of bounded variation}

We will split our argument into two cases, depending on whether the measures involved are of bounded variation or not. In the case of bounded variation, the argument is slightly simpler and, crucially, will lead to an explicit estimate of the convergence rate, as indicated in the statements of Theorems \ref{thm:logistic_convergence} \& \ref{thm:generalised_logistic_convergence}. This estimate, we will show, will break down in the presence of unbounded variation. We will exploit the fact that $K_l$ is piecewise linear. We start by considering step functions.
\begin{definition}
    Let $l\in\mathbb{N}$. A function $p:\mathbb{R}\to\mathbb{R}$ is called a rational step function of width $\frac{1}{l}$ on $[0,\infty)$ if
    \begin{equation*}
    p(x) = \sum_{i=0}^N \rho_i\cdot\mathds{1}_{[\frac{i}{l}, \frac{i+1}{l})}(x)
    \end{equation*}
    for some $\rho_i\in\mathbb{R}$ and $N\in\mathbb{N}$.
\end{definition} 
The benefit of considering rational step functions of width $\frac{1}{l}$ is that they are mapped to a constant function by $Q_{K_l}$, which corresponds to the uniform distribution that we want.
\begin{lemma} \label{lm:Rstep_vanish}
    If $p$ is a rational step function of width $\frac{1}{l}$ on $[0,\infty)$ and $p\geq0$, then \begin{equation*}
        Q_{K_{l}} p = \lVert p \rVert_{ L^1[0,\infty)}\cdot\mathds{1}_{[0,1]}.
    \end{equation*}
\end{lemma}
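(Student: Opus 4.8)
The plan is to check that the constant function $\lVert p\rVert_{L^1[0,\infty)}\cdot\mathds{1}_{[0,1]}$ satisfies the identity defining $Q_{K_l}p$, and then to invoke the almost-everywhere uniqueness built into the definition of $Q_\tau$. First I would write $p = \sum_{i=0}^N\rho_i\,\mathds{1}_{[i/l,(i+1)/l)}$ and note that, since the intervals $[i/l,(i+1)/l)$ are pairwise disjoint and $p\ge 0$, each $\rho_i\ge 0$, so that $\lVert p\rVert_{L^1[0,\infty)}=\tfrac1l\sum_{i=0}^N\rho_i$.

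Next, for an arbitrary measurable $B\subseteq[0,1]$ I would compute $\int_{K_l^{-1}(B)}p\,\de L$. The geometric input is the preceding lemma together with the explicit formula \eqref{eq:K_l}: on each interval $J_i\coloneqq[i/l,(i+1)/l]$ the map $K_l$ is affine with slope $\pm l$ and restricts to a bijection $J_i\to[0,1]$. Hence $K_l^{-1}(B)=\bigcup_{i\ge 0}\big(K_l^{-1}(B)\cap J_i\big)$, a union that is disjoint up to the null set of breakpoints $\{i/l:i\in\mathbb{N}\}$, and the affine change of variables on each piece gives $L\big(K_l^{-1}(B)\cap J_i\big)=\tfrac1l L(B)$. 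Since $p$ equals the constant $\rho_i$ almost everywhere on $J_i$, summing yields $\int_{K_l^{-1}(B)}p\,\de L=\sum_{i=0}^N\rho_i\cdot\tfrac1l L(B)=L(B)\cdot\lVert p\rVert_{L^1[0,\infty)}$.

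Finally, since $\int_B \lVert p\rVert_{L^1[0,\infty)}\,\mathds{1}_{[0,1]}\,\de L=L(B)\cdot\lVert p\rVert_{L^1[0,\infty)}$ as well, the candidate function satisfies the relation defining $Q_{K_l}p$ for every measurable $B\subseteq[0,1]$, so by uniqueness $Q_{K_l}p=\lVert p\rVert_{L^1[0,\infty)}\,\mathds{1}_{[0,1]}$ up to almost-everywhere equivalence.

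I do not expect a genuine obstacle here: the argument is essentially bookkeeping once the piecewise-linear structure of $K_l$ is in hand. The only points needing a little care are verifying that the breakpoints $\{i/l\}$ form a Lebesgue-null set, so the decomposition of $K_l^{-1}(B)$ is effectively disjoint, and justifying the scaling $L(K_l^{-1}(B)\cap J_i)=L(B)/l$ on each affine piece, which is immediate from the change-of-variables formula for an affine bijection with Jacobian $l$. It is also worth recording that the finiteness of the sum — only $i=0,\dots,N$ contribute, the remaining $\rho_i$ being zero — keeps everything well defined, and that the hypothesis $p\ge 0$ enters only through the identification $\tfrac1l\sum_i\rho_i=\lVert p\rVert_{L^1[0,\infty)}$ (rather than $\tfrac1l\sum_i|\rho_i|$), which is why the lemma is stated for nonnegative $p$.
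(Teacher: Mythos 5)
Your argument is correct and rests on exactly the same fact as the paper's (one-line) proof: on each interval $[\tfrac{i}{l},\tfrac{i+1}{l})$ the map $K_l$ is affine with slope $\pm l$ and maps bijectively onto the unit interval, so each constant piece of $p$ pushes forward to a uniform density of the right mass. Your version simply spells out the measure-theoretic bookkeeping (verifying the defining identity of $Q_{K_l}$ set-by-set and invoking a.e.\ uniqueness), which the paper leaves implicit.
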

\begin{proof}
    This follows from the fact that on each interval $I = [\frac{i}{l}, \frac{i+1}{l})$ for $i\in\mathbb{N}$, $K_l$ is linear and the image $K_l(I)$ is $[0,1)$ or $(0,1]$.
\end{proof}

For a function of bounded variation $p\in BV[0,\infty)$, we will derive the explicit convergence rate by approximating it with rational step functions.

\begin{lemma} \label{lm:Rstep_approx}
    For every $p\in BV[0,\infty)\cap L^1[0,\infty)$ and $l\in\mathbb{N}$, there exists $p_l$ a rational step function of width $\frac{1}{l}$ on $[0,\infty)$ such that
    \begin{equation*}
        \lVert p_l - p\rVert_{ L^1[0,\infty)} \leq \frac{\ess V(p)}{l},
    \end{equation*}
    where $\ess V(p)=\inf_{\Tilde{p}=p\text{ a.e.}}V(\Tilde{p})$ is the essential variation of $p$ on $[0,\infty)$.
\end{lemma}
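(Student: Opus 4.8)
The plan is to build $p_l$ by averaging $p$ over each dyadic-type interval $[\frac{i}{l},\frac{i+1}{l})$ and to control the error interval-by-interval using the variation of $p$ on that interval. Concretely, first I would define $p_l(x) = \sum_{i=0}^{\infty} \rho_i \cdot \mathds{1}_{[\frac{i}{l},\frac{i+1}{l})}(x)$ with $\rho_i = l\int_{i/l}^{(i+1)/l} p\,dL$ (the average value of $p$ on the $i$th interval); note that since $p\in L^1[0,\infty)$ only finitely many "blocks" carry appreciable mass, but even if we need genuinely infinitely many nonzero $\rho_i$ one can truncate at some large $N$ at negligible cost, or simply relax the definition of rational step function to allow a countable sum — I would check which convention the paper wants and adjust $N$ accordingly. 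The key point is $\|p_l\|_{L^1} \le \|p\|_{L^1}$ and $p_l \ge 0$ when $p\ge 0$, so $p_l$ is an admissible competitor.

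Next I would estimate the error on a single interval $I_i = [\frac{i}{l},\frac{i+1}{l})$. Since $\rho_i$ is the mean of $p$ over $I_i$, for a.e. $x\in I_i$ there is a point $y\in I_i$ with $p(y)=\rho_i$ (or one uses that $\rho_i$ lies between $\essinf_{I_i} p$ and $\esssup_{I_i} p$), hence $|p(x)-\rho_i| \le \esssup_{I_i} p - \essinf_{I_i} p \le \ess V_{I_i}(p)$, the essential variation of $p$ restricted to $I_i$. Integrating over $I_i$ of length $\frac{1}{l}$ gives $\int_{I_i} |p - \rho_i|\,dL \le \frac{1}{l}\ess V_{I_i}(p)$. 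Summing over $i$ and using the additivity (superadditivity) of variation over adjacent intervals, $\sum_i \ess V_{I_i}(p) \le \ess V(p)$ on $[0,\infty)$, yields $\|p_l - p\|_{L^1[0,\infty)} \le \frac{1}{l}\ess V(p)$, which is the claim.

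The main technical obstacle is making the manipulation of \emph{essential} variation rigorous: the clean pointwise facts ("$\rho_i$ equals $p$ somewhere on $I_i$", "variations of adjacent pieces add up") hold for a genuine representative but must be transferred to the essential version. The standard fix is to replace $p$ at the outset by its good representative $\tilde p$ achieving $V(\tilde p) = \ess V(p)$ (such a representative exists for $BV$ functions — e.g. the right-continuous one), carry out the entire argument with $\tilde p$, and observe that $p_l$ computed from $\tilde p$ agrees a.e. with the one computed from $p$ since the defining integrals are insensitive to null sets; then $\|p_l - p\|_{L^1} = \|p_l - \tilde p\|_{L^1} \le \frac{1}{l}V(\tilde p) = \frac{1}{l}\ess V(p)$. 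A minor secondary point is the bookkeeping at infinity — ensuring the tail of $p$ contributes controllably — but since $\ess V(p) < \infty$ forces $p$ to have a limit at $+\infty$ which must be $0$ by integrability, the tail intervals have vanishing variation sum and cause no trouble.
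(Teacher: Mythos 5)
Your proof is correct, but it takes a genuinely different route from the paper. The paper first passes to the minimal-variation representative, writes it via the Jordan decomposition $p=p_1-p_2$ with $p_1(x)=\frac{1}{2}(V(p)_x+p(x))$ and $p_2(x)=\frac{1}{2}(V(p)_x-p(x))$ both increasing, approximates each monotone piece by a step function of point values $p_j(x_i)$, and gets the bound from a telescoping sum (using $p(x)\to 0$ at infinity, exactly as you observe). You instead work cell by cell: taking $\rho_i$ to be the average of $p$ on $[\tfrac{i}{l},\tfrac{i+1}{l})$, bounding the error on each cell by $\tfrac{1}{l}\bigl(\ess\sup_{I_i}p-\ess\inf_{I_i}p\bigr)\leq\tfrac{1}{l}V_{\bar{I_i}}(\tilde p)$, and summing via additivity of the variation over adjacent closed intervals. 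Both arguments are sound; yours is more local and has the small bonus that the cell-average construction preserves nonnegativity and the $L^1$ norm automatically (convenient in the subsequent Theorem~\ref{thm:bv_convergence_rate}, where the paper has to take $\max(p_l,0)$), while the paper's monotone decomposition avoids any discussion of oscillation versus variation. Your handling of the essential variation (pass to a good representative at the outset) matches the paper's ``without loss of generality'' step. One remark on your worry about finitely many blocks: the paper's own construction also produces a step function with infinitely many (generally nonzero) values $p(x_i)$, despite the finite-$N$ convention in the definition, so your countable sum is at the same level of rigor as the paper's; the infinitude is harmless because Lemma~\ref{lm:Rstep_vanish} only needs the step function to be piecewise constant on the cells and integrable, and a hard truncation would in fact spoil the exact bound $\ess V(p)/l$, so it is better simply to allow countably many steps, as the paper implicitly does.
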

\begin{proof}
    Without loss of generality, we write $p$ as the representative of $[p]$ with the smallest variation $V(p) = \ess V([p])$. Let $V(p)_x$ be its variation over $[0,x]$:
    \begin{equation*}
        V(p)_x = \sup_{\mathcal{P}_x}\sum_{i=0}^{N_{\mathcal{P}_x}} |p(a_{i+1})-p(a_i)|,
    \end{equation*}
    where $\mathcal{P}_x$ is a partition of $[0,x]$. Then $V(p)_\infty=V(p)$.
    
    Let $p_1(x) = \frac{1}{2}(V(p)_x+p(x))$ and $p_2(x) = \frac{1}{2}(V(p)_x-p(x))$. Then $p_1$ and $p_2$ are increasing, and $p=p_1-p_2$. Moreover,
    \begin{equation*}
        \lim_{x\rightarrow\infty}p_1(x) = \lim_{x\rightarrow\infty}p_1(x) = \frac{1}{2}V(p).
    \end{equation*}
    This follows from the fact that $\lim_{x\rightarrow\infty}p(x) = 0$. Since $p\in BV[0,\infty)$, $p(x)$ must converge as $x\rightarrow\infty$ (the number of upcrossings of $p(x)$ between any $a<b$ must be finite). Since $p\in L^1[0,\infty)$, the limit must be zero.

    Fix some $l\in\mathbb{N}$. We can approximate the increasing functions $p_1$ and $p_2$ respectively by rational step functions $\Tilde{p_1}$ and $\Tilde{p_2}$ of width $\frac{1}{l}$.
    \begin{align*}
        \Tilde{p_1}(x) &= \sum_{i=0}^\infty p_1(x_i)\cdot\mathds{1}_{[\frac{i}{l},\frac{i+1}{l})},\\
        \Tilde{p_2}(x) &= \sum_{i=0}^\infty p_2(x_i)\cdot\mathds{1}_{[\frac{i}{l},\frac{i+1}{l})},
    \end{align*}
    where each $x_i$ is an arbitrary point in $[\frac{i}{l},\frac{i+1}{l})$.

    Since $p_1$ is increasing, it holds that $|p_1(x)-p_1(x_i)|\leq p_1(\frac{i+1}{l})-p_1(\frac{i}{l})$ for all $i\in\mathbb{N}$ and $x\in[\frac{i}{l},\frac{i+1}{l})$. Therefore,
    \begin{equation*}
        \int_{[\frac{i}{l},\frac{i+1}{l})}|p_1(x)-p_1(x_i)|dx\leq \frac{1}{l}\left(p_1\left(\frac{i+1}{l}\right)-p_1\left(\frac{i}{l}\right)\right)\quad\text{for all $i\in\mathbb{N}$.}
    \end{equation*}
    The sum over $i\in\mathbb{N}$ is telescoping:
    \begin{align*}
        \lVert p_1-\Tilde{p_1}\rVert_{ L^1[0,\infty)} &= \sum_{i=0}^\infty\int_{[\frac{i}{l},\frac{i+1}{l})}|p_1(x)-p_1(x_i)|dx\\
        &\leq\sum_{i=0}^\infty\frac{1}{l}\left(p_1\left(\frac{i+1}{l}\right)-p_1\left(\frac{i}{l}\right)\right)\\
        &=\lim_{i\rightarrow\infty}\frac{1}{l}(p_1(i)-p_1(0))\\
        &= \frac{1}{l}(\frac{1}{2}V(p)-p(0)).
    \end{align*}
    The same result applies for $p_2$:
    \begin{equation*}
        \lVert p_2-\Tilde{p_2}\rVert_{ L^1[0,\infty)}= \frac{1}{l}(\frac{1}{2}V(p)+p(0)).
    \end{equation*}

    Let $p_l = \Tilde{p_1} - \Tilde{p_2}$. Then $p_l$ is also a rational step function of width $\frac{1}{l}$ and we have the bound
    \begin{equation*}
        \lVert p-p_l\rVert_{ L^1[0,\infty)} \leq \lVert p_1-\Tilde{p_1}\rVert_{ L^1[0,\infty)} + \lVert p_2-\Tilde{p_2}\rVert_{ L^1[0,\infty)} \leq \frac{V(p)}{l}.
    \end{equation*}
\end{proof}

Combining the previous results gives an explicit estimate on the convergence rate of $Q_{K_l}p$ to uniformity as $l\rightarrow\infty$:
\begin{theorem} \label{thm:bv_convergence_rate}
    For a density function $p\in BV[0,\infty)\cap L^1[0,\infty)$ such that $p\geq 0$ and $\lVert p \rVert_{ L^1[0,\infty)}=1$,
    \begin{equation*}
        \lVert Q_{K_l}p-\mathds{1}\rVert_{ L^1[0,1]}\leq\frac{2\cdot\ess V(p)}{l}\quad\text{for all $l\in\mathbb{N}$}.
    \end{equation*}
\end{theorem}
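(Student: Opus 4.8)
The plan is to combine the two preceding lemmas with the triangle inequality and the contractivity of $Q_{K_l}$. Given $p \in BV[0,\infty) \cap L^1[0,\infty)$ with $p \geq 0$ and $\lVert p \rVert_{L^1[0,\infty)} = 1$, Lemma \ref{lm:Rstep_approx} produces a rational step function $p_l$ of width $\frac{1}{l}$ with $\lVert p_l - p \rVert_{L^1[0,\infty)} \leq \frac{\ess V(p)}{l}$. I would then write
\begin{equation*}
\lVert Q_{K_l} p - \mathds{1} \rVert_{L^1[0,1]} \leq \lVert Q_{K_l} p - Q_{K_l} p_l \rVert_{L^1[0,1]} + \lVert Q_{K_l} p_l - \mathds{1} \rVert_{L^1[0,1]},
\end{equation*}
and bound the two terms separately.

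For the first term, linearity of $Q_{K_l}$ (Lemma \ref{lm:Qproperties}(1)) and the operator norm bound $\lVert Q_{K_l} \rVert \leq 1$ (Lemma \ref{lm:Qproperties}(2)) give
\begin{equation*}
\lVert Q_{K_l} p - Q_{K_l} p_l \rVert_{L^1[0,1]} = \lVert Q_{K_l}(p - p_l) \rVert_{L^1[0,1]} \leq \lVert p - p_l \rVert_{L^1[0,\infty)} \leq \frac{\ess V(p)}{l}.
\end{equation*}
For the second term, I would like to apply Lemma \ref{lm:Rstep_vanish}, which says $Q_{K_l} p_l = \lVert p_l \rVert_{L^1[0,\infty)} \cdot \mathds{1}_{[0,1]}$ provided $p_l \geq 0$. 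Then $\lVert Q_{K_l} p_l - \mathds{1} \rVert_{L^1[0,1]} = \bigl| \lVert p_l \rVert_{L^1[0,\infty)} - 1 \bigr| = \bigl| \lVert p_l \rVert_{L^1[0,\infty)} - \lVert p \rVert_{L^1[0,\infty)} \bigr| \leq \lVert p_l - p \rVert_{L^1[0,\infty)} \leq \frac{\ess V(p)}{l}$, using that $\lVert p \rVert_{L^1} = 1$ and the reverse triangle inequality. Adding the two contributions yields the claimed bound $\frac{2 \cdot \ess V(p)}{l}$.

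The main obstacle is the nonnegativity hypothesis in Lemma \ref{lm:Rstep_vanish}: the step function $p_l = \widetilde{p_1} - \widetilde{p_2}$ constructed in the proof of Lemma \ref{lm:Rstep_approx} need not be nonnegative, even though $p$ is. I would resolve this by modifying the approximation slightly — for instance, replacing $p_l$ by its positive part $p_l^+$ (still a rational step function of width $\frac{1}{l}$), and checking that $\lVert p_l^+ - p \rVert_{L^1} \leq \lVert p_l - p \rVert_{L^1}$ since $p \geq 0$ pointwise implies $|p_l^+ - p| \leq |p_l - p|$ almost everywhere. Alternatively, one can observe that Lemma \ref{lm:Rstep_approx} can be applied to get $p_l$ approximating $p$, then apply $Q_{K_l}$ directly to the \emph{decomposition}: $Q_{K_l} p_l = Q_{K_l} \widetilde{p_1} - Q_{K_l} \widetilde{p_2}$, and Lemma \ref{lm:Rstep_vanish} applies to each nonnegative piece $\widetilde{p_1}, \widetilde{p_2}$ separately, giving $Q_{K_l} p_l = (\lVert \widetilde{p_1} \rVert_{L^1} - \lVert \widetilde{p_2} \rVert_{L^1}) \mathds{1}_{[0,1]}$, which is again a constant multiple of $\mathds{1}_{[0,1]}$; the constant equals $\int_0^\infty p_l$, and the argument proceeds as before. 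Either way, once the nonnegativity technicality is handled the estimate is immediate.
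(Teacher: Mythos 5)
Your proposal is correct and follows essentially the same route as the paper's proof: approximate $p$ by a rational step function of width $\frac{1}{l}$ via Lemma~\ref{lm:Rstep_approx}, use linearity and contractivity of $Q_{K_l}$ (Lemma~\ref{lm:Qproperties}) together with Lemma~\ref{lm:Rstep_vanish}, and close with the reverse triangle inequality to get the bound $\frac{2\cdot\ess V(p)}{l}$. Your positive-part fix for the nonnegativity of $p_l$ is precisely what the paper does (it replaces $p_l$ by $\max(p_l,0)$ ``without loss of generality''), and your pointwise check $|p_l^+-p|\leq|p_l-p|$ makes that step more explicit than in the paper.
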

\begin{proof}
    For brevity we will write $\lVert\cdot\rVert_1=\lVert\cdot\rVert_{ L^1[0,1]}$ and $\lVert\cdot\rVert_2=\lVert\cdot\rVert_{ L^1[0,\infty)}$. Fix $l\in\mathbb{N}$. By Lemma~\ref{lm:Rstep_approx}, there exists $p_l$ a rational step function of width $\frac{1}{l}$ on $[0,\infty)$ such that $\lVert p-p_l\rVert_2\leq\frac{V(p)}{l}$. Without loss of generality, assume $p_l\geq0$ (if not, take $\Tilde{p_l} = \max(p_l,0)$).
    
    Thanks to the linearity of $Q_{K_l}$, we have the bound
    \begin{align*}
        \lVert Q_{K_l}p-\mathds{1}\rVert_1 \leq \lVert Q_{K_l}(p-p_l)\rVert_1 + \lVert Q_{K_l}p_l-\lVert p_l\rVert_2\cdot\mathds{1}\rVert_1+|\lVert p_l\rVert_2-1|.
    \end{align*}
    Since $p_l$ is a rational step function of width $\frac{1}{l}$, Lemma~\ref{lm:Rstep_vanish} implies that $Q_{K_l}p_l=\lVert p_l\rVert_2\cdot\mathds{1}$, so
    \begin{equation*}
        \lVert Q_{K_l}p-\mathds{1}\rVert_1 \leq \lVert Q_{K_l}(p-p_l)\rVert_1 + |\lVert p_l\rVert_2-1|.
    \end{equation*}
    By Lemma \ref{lm:Qproperties},
    \begin{equation*}
        \lVert Q_{K_l}p-\mathds{1}\rVert_1 \leq \lVert p-p_l\rVert_2 + |\lVert p_l\rVert_2-1|.
    \end{equation*}
    Since $\lVert p\rVert_2=1$ and by the reverse triangular inequality, $|\lVert p_l\rVert_2-1| \leq \lVert p-p_l\rVert_2$. Therefore,
    \begin{equation*}
        \lVert Q_{K_l}p-\mathds{1}\rVert_1 \leq 2\lVert p-p_l\rVert_2 \leq \frac{2\cdot\ess V(p)}{l}.
    \end{equation*}
\end{proof}
\begin{remark}
    In our construction of the generalised logistic maps, we consider an exponentially growing sequence of cell lengths $(l_n=m^n:{n\in\mathbb{N}})$ for a fixed $m\in\mathbb{N}$. This results in a convergence rate that is also exponential:
    \begin{equation*}
        \lVert Q_{K_{m^n}}p-\mathds{1}\rVert_{ L^1[0,1]}\leq\frac{2\cdot\ess V(p)}{m^n}\quad\text{for all $n\in\mathbb{N}$}.
    \end{equation*}
\end{remark}

\subsubsection{Functions of unbounded variation}

For a general $p\in L^1[0,\infty)$, the approximation by rational step functions in Lemma~\ref{lm:Rstep_approx} breaks down. Instead, we will use general step functions to handle the more general case of functions of potentially unbounded variation.
\begin{lemma} \label{lm:genstepapprox}
    Let $I\subseteq\mathbb{R}$ be an interval. For every bounded non-negative function $p\in L^1(I)$ such that $0\leq p \leq M$ for some $M>0$, there exists a sequence of step functions $p_j$ on $I$ such that $0\leq p_j \leq M$ and $p_j \rightarrow p$ as $j\rightarrow\infty$ almost everywhere.
\end{lemma}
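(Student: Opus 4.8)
The statement is the standard fact that a bounded measurable $L^1$ function can be approximated almost everywhere by step functions that respect the same bound $M$; the only mild subtlety is to do this on a possibly unbounded interval $I$ and to keep the pointwise bound $0 \le p_j \le M$.

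The plan is as follows. First I would reduce to the case of bounded $I$: write $I = \bigcup_k I_k$ as an increasing union of bounded intervals $I_k = I \cap [-k,k]$, approximate $p|_{I_k}$ on each, extend by zero outside $I_k$, and then pass to a diagonal sequence — since on any fixed bounded subinterval the tail contributions eventually vanish, the diagonal sequence still converges a.e. on all of $I$ and keeps the bound $0 \le p_j \le M$. So assume $I$ is bounded. Second, invoke a standard density result: simple functions are dense in $L^1(I)$, hence there is a sequence $s_j$ of simple functions with $\|s_j - p\|_{L^1(I)} \to 0$; after passing to a subsequence we may assume $s_j \to p$ almost everywhere. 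Third, truncate: replace $s_j$ by $\tilde s_j = \min(\max(s_j,0),M)$, which is still simple, still satisfies $0 \le \tilde s_j \le M$, and still converges to $p$ a.e. since $0 \le p \le M$ implies the truncation is continuous at the limit value. Fourth — the one genuinely geometric step — upgrade ``simple function'' to ``step function'': each $\tilde s_j$ takes finitely many values $c_1,\dots,c_r$ on measurable sets $A_1,\dots,A_r$ partitioning $I$; I would approximate each $A_i$ from inside/outside by a finite union of intervals $B_i$ with $|A_i \triangle B_i|$ small (regularity of Lebesgue measure), arrange the $B_i$ to be disjoint, and on the leftover set assign the value $0$. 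This produces a genuine step function within $L^1$-distance $\varepsilon_j \to 0$ of $\tilde s_j$, with values still in $\{0,c_1,\dots,c_r\} \subseteq [0,M]$. Finally, combine: the resulting step functions converge to $p$ in $L^1(I)$, so a further subsequence converges a.e., and by construction they are squeezed between $0$ and $M$.

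The main obstacle — really the only place requiring care rather than citation — is the passage from simple functions to step functions while simultaneously preserving both the pointwise bound $0 \le p_j \le M$ and almost-everywhere (not merely $L^1$) convergence. The bound is preserved automatically because every value used lies in $[0,M]$; a.e.\ convergence is recovered each time by extracting a subsequence, which is harmless since we only claim existence of an approximating sequence. On the unbounded interval the diagonal argument needs the observation that for any bounded window the error from ``the part of $I$ outside $[-k,k]$'' is eventually identically zero on that window, so no uniformity in $k$ is needed.

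Note that the full strength of this lemma — bounded step-function approximation — is exactly what is needed in the unbounded-variation case, where the telescoping argument of Lemma~\ref{lm:Rstep_approx} fails: one first reduces a general $p \in L^1[0,\infty)$ to a bounded density by truncation, then applies this lemma, then uses dominated convergence together with the contraction property $\|Q_\tau\| \le 1$ from Lemma~\ref{lm:Qproperties} to pass the limit through $Q_{K_l}$. I would keep that downstream use in mind when choosing the statement, but the proof of the lemma itself is the routine measure-theoretic argument sketched above.
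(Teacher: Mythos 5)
Your proposal is correct and follows essentially the same strategy as the paper: obtain step functions converging to $p$ almost everywhere and then truncate them to $[0,M]$, exactly the paper's $p_j = \min(M,\max(0,\tilde p_j))$ step. The only difference is that the paper simply cites the standard approximation theorem (Stein--Shakarchi, Theorem 4.3) for the a.e.\ step-function approximation, whereas you re-derive it from scratch (simple functions dense in $L^1$, regularity of Lebesgue measure to pass from simple to step functions, subsequence extraction, and an exhaustion argument for unbounded $I$) --- a correct, self-contained but longer route to the same fact.
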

\begin{proof}
    By \cite[Theorem 4.3]{stein2005real}, there exists a sequence of step functions $\Tilde{p_j}$ such that $\Tilde{p_j}\rightarrow p$ as $k\rightarrow\infty$ almost everywhere. To get the desired $p_j$, either observe that $\Tilde{p_j}$ is already bounded in $[0,M]$ by construction, or simply take $p_j = \min(M,\max(0, \Tilde{p_j}))$.
\end{proof}

\begin{theorem} \label{thm:general_convergence}
    For any density function $p\in L^1[0,\infty)$ such that $p\geq0$ and $\lVert p\rVert_{ L^1[0,\infty)}=1$,
    \begin{equation*}
        Q_{K_{l}} p \rightarrow \mathds{1}\text{ as $l\rightarrow\infty$}\quad\text{in $ L^1[0,1]$.}
    \end{equation*}
\end{theorem}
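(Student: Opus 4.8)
The plan is to reduce the general statement to the bounded-variation case already established in Theorem~\ref{thm:bv_convergence_rate}, using only soft approximation arguments together with the fact that $Q_{K_l}$ is a linear contraction on $L^1$ (Lemma~\ref{lm:Qproperties}). The key observation is that non-negative step functions with finitely many pieces lie in $BV[0,\infty)\cap L^1[0,\infty)$ and are dense in $L^1[0,\infty)$. Concretely, given $\varepsilon>0$, I would construct a probability density $p_\varepsilon$ of bounded variation with $\lVert p-p_\varepsilon\rVert_{L^1[0,\infty)}<\varepsilon$, then write, using linearity of $Q_{K_l}$, Lemma~\ref{lm:Qproperties}, and Theorem~\ref{thm:bv_convergence_rate},
\[
\lVert Q_{K_l}p-\mathds{1}\rVert_{L^1[0,1]}\le\lVert Q_{K_l}(p-p_\varepsilon)\rVert_{L^1[0,1]}+\lVert Q_{K_l}p_\varepsilon-\mathds{1}\rVert_{L^1[0,1]}\le\varepsilon+\frac{2\,\ess V(p_\varepsilon)}{l},
\]
and conclude by letting $l\to\infty$ and then $\varepsilon\to0$.

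The construction of $p_\varepsilon$ proceeds in two steps. First, truncate: set $\bar p=\min(p,M)\,\mathds{1}_{[0,R]}$, choosing $R$ large so that $\int_R^\infty p<\varepsilon/4$ and then $M$ large so that $\int_0^\infty (p-M)^+<\varepsilon/4$; monotone and dominated convergence give $\lVert p-\bar p\rVert_{L^1[0,\infty)}<\varepsilon/2$, with $\bar p$ bounded and supported on the finite-measure interval $[0,R]$. Second, apply Lemma~\ref{lm:genstepapprox} on $I=[0,R]$ to obtain step functions $p_j$ with $0\le p_j\le M$ and $p_j\to\bar p$ almost everywhere; since $[0,R]$ has finite measure and the $p_j$ are uniformly bounded, dominated convergence upgrades this to $\lVert p_j-\bar p\rVert_{L^1[0,R]}\to0$, so some $p_j$ satisfies $\lVert p_j-\bar p\rVert_{L^1[0,\infty)}<\varepsilon/2$. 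This $p_j$ is a finite combination of indicators of bounded intervals, hence lies in $BV[0,\infty)\cap L^1[0,\infty)$ with finite essential variation. Renormalising, $p_\varepsilon:=p_j/\lVert p_j\rVert_{L^1[0,\infty)}$ is a genuine density (for $\varepsilon$ small, $\lVert p_j\rVert_{L^1[0,\infty)}$ is positive and close to $1$), still of bounded variation, and $\lVert p-p_\varepsilon\rVert_{L^1[0,\infty)}<\varepsilon$ after absorbing the normalisation error $\big|\,\lVert p_j\rVert_{L^1[0,\infty)}-1\,\big|\le\lVert p-p_j\rVert_{L^1[0,\infty)}$. With $p_\varepsilon$ in hand, Theorem~\ref{thm:bv_convergence_rate} gives $\lVert Q_{K_l}p_\varepsilon-\mathds{1}\rVert_{L^1[0,1]}\le 2\,\ess V(p_\varepsilon)/l\to0$ for fixed $\varepsilon$, and the displayed bound yields $\limsup_{l\to\infty}\lVert Q_{K_l}p-\mathds{1}\rVert_{L^1[0,1]}\le\varepsilon$.

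I expect the only point requiring genuine care — rather than a true obstacle — is the passage from almost-everywhere to $L^1$ convergence of the step-function approximants. This is exactly why the truncation to a bounded, compactly supported $\bar p$ must precede the invocation of Lemma~\ref{lm:genstepapprox}: on $[0,\infty)$ a uniformly bounded, a.e.-convergent sequence need not converge in $L^1$ because mass may escape to infinity, whereas on the finite-measure interval $[0,R]$ dominated convergence applies cleanly. A secondary bookkeeping matter is that Theorem~\ref{thm:bv_convergence_rate} is stated for probability densities, so the approximant must be renormalised, with the normalisation error controlled by the $L^1$ approximation error via the reverse triangle inequality. All the quantitative content lives in the bounded-variation estimate; the present theorem is extracted from it purely by approximation.
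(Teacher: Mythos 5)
Your proposal is correct and follows essentially the same route as the paper's proof: approximate by step functions via Lemma~\ref{lm:genstepapprox}, invoke the bounded-variation rate from Theorem~\ref{thm:bv_convergence_rate}, and transfer the approximation error through the linear contraction $Q_{K_l}$ of Lemma~\ref{lm:Qproperties} with the triangle inequality. The only difference is organisational — you truncate in height and support simultaneously so that dominated convergence is applied on a finite-measure interval (and then renormalise), whereas the paper first handles bounded densities on all of $[0,\infty)$ and then removes the boundedness by monotone truncation — and if anything your arrangement is the more careful one at the dominated-convergence step.
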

\begin{proof}
    Again, for simplicity we will write $\lVert\cdot\rVert_1=\lVert\cdot\rVert_{ L^1[0,1]}$ and $\lVert\cdot\rVert_2=\lVert\cdot\rVert_{ L^1[0,\infty)}$. We first consider bounded $p$. Say $p$ is bounded such that $p\leq M$. By Lemma~\ref{lm:genstepapprox}, there exist step functions $p_j\in L^1[0,\infty)$ such that $0\leq p_j\leq M$ and $p_j\rightarrow p$ as $j\rightarrow\infty$ almost everywhere. By the Dominated Convergence Theorem, $p_j\rightarrow p$ in $ L^1[0,\infty)$. Then for any $\epsilon>0$, there exists $J\in\mathbb{N}$ such that $|\lVert p_J\rVert_2-1| \leq \lVert p_J-p\rVert_2 \leq \epsilon$.

    Thanks to the linearity of $Q_{K_l}$ and the triangle inequality,
    \begin{equation*}
        \lVert Q_{K_l} p -\mathds{1}\rVert_1 \leq \rVert Q_{K_l} (p - p_J)\rVert_1 +\lVert Q_{K_l} p_J - \lVert p_J\rVert_2\mathds{1}\rVert_1 + \lVert \lVert p_J\rVert_2\mathds{1}-\mathds{1}\rVert_1.
    \end{equation*}
    By Lemma~\ref{lm:Qproperties},
    \begin{align*}
        \lVert Q_{K_l} p -\mathds{1}\rVert_1 &\leq\lVert p-p_J\rVert_2 + \lVert Q_{K_l} p_J - \lVert p_J\rVert_2\mathds{1}\rVert_1+|\lVert p_J\rVert_2-1|\\
        &\leq 2\epsilon+\lVert Q_{K_l} p_J - \lVert p_J\rVert_2\mathds{1}\rVert_1.
    \end{align*}
    Any step function $p_J$ clearly has bounded variation, so by Theorem~\ref{thm:bv_convergence_rate}, $\lVert Q_{K_l} p_J-\lVert p_J\rVert_2\mathds{1}\rVert_1 \leq \frac{c_J}{l}$ for all $l\in\mathbb{N}$ where $c_J=2\cdot\ess V(p_J)$. Then,
    \begin{equation*}
        \lVert Q_{K_l} p -\mathds{1}\rVert_1 \leq2\epsilon+\frac{c_J}{l}\quad\text{for all $l\in\mathbb{N}$}.
    \end{equation*}
    There exists $l_\epsilon\in\mathbb{N}$ such that for all $l\geq l_\epsilon$ it holds that $\lVert Q_{K_l}p -\mathds{1}\rVert_1\leq3\epsilon$. Therefore, for bounded $p$,
    \begin{equation*}
        Q_{K_l}p\rightarrow\mathds{1}\text{ as $l\rightarrow\infty$}\quad\text{in $ L^1[0,1]$.}
    \end{equation*}

    For a general density function $p\in L^1[0,\infty)$, let $p^M = \min(p,M)$. Then $0\leq p^M \nearrow p$, so by the Monotone Convergence Theorem, $p^M \rightarrow p$ in $ L^1[0,\infty)$ as $M\rightarrow\infty$.

    For all $\epsilon>0$, there exists $M_\epsilon>0$ such that $|\lVert p^{M_\epsilon}\rVert_2-1| \leq \lVert p^{M_\epsilon}-p\rVert_2 \leq \epsilon$. $p^{M_\epsilon}$ is bounded, so we can apply the results above. There exists $l_\epsilon\in\mathbb{N}$ such that for all $l\geq l_\epsilon$, 
    \begin{equation*}
        \lVert Q_{K_l} p^{M_\epsilon} -\lVert p^{M_\epsilon}\rVert_2\cdot\mathds{1}\rVert_1 \leq \epsilon.
    \end{equation*}
    Again, by the triangle inequality and Lemma~\ref{lm:Qproperties},
    \begin{align*}
        \lVert Q_{K_l} p -\mathds{1}\rVert_1 &\leq \lVert  Q_{K_l} (p -p^{M_\epsilon})\rVert_1 + \lVert  Q_{K_l} p^{M_\epsilon} -\lVert p^{M_\epsilon}\rVert_2\mathds{1}\rVert_1 + |\lVert p^{M_\epsilon}\rVert_2 - \lVert p \rVert_2|\\
        &\leq\lVert p-p^{M_\epsilon}\rVert_2 + \lVert  Q_{K_l} p^{M_\epsilon} -\lVert p^{M_\epsilon}\rVert_2\mathds{1}\rVert_1 + |\lVert p^{M_\epsilon}\rVert_2 - \lVert p \rVert_2|\\
        &\leq 3\epsilon,
    \end{align*}
    for all $l\geq l_\epsilon$. Therefore, $Q_{K_l} p\rightarrow\mathds{1}$ as $l\rightarrow\infty$ in $ L^1[0,1]$.
\end{proof}

\begin{remark}
    We present an example of $p\in L^1[0,\infty)$ with unbounded variation such that the convergence rate of $Q_{K_l}p$ in Theorem~\ref{thm:bv_convergence_rate} breaks down. Consider
    \begin{equation*}
        p(x) = \sum_{i=0}^\infty 2^{\frac{i}{2}} \mathds{1}_{(\frac{1}{2^{i+1}},\frac{1}{2^i}]}\in L^1[0,\infty),
    \end{equation*}
    which has norm
    \begin{equation*}
        \lVert p\rVert_2 = \sum_{i=0}^\infty 2^\frac{i}{2}\cdot\frac{1}{2^{i+1}} = \frac{1}{2}\sum_{i=0}^{\infty} 2^{-\frac{i}{2}} = \frac{\sqrt{2}+1}{2\sqrt{2}}.
    \end{equation*}
    Moreover, $p$ has unbounded variation since $p$ is unbounded near 0. By linearity of $Q_{K_l}$,
    \begin{align*}
        Q_{K_{2^n}}p &= Q_{K_{2^n}}\left(\sum_{i=0}^{i=n-1}2^{\frac{i}{2}}\mathds{1}_{(\frac{1}{2^{i+1}},\frac{1}{2^i}]}\right) + Q_{K_{2^n}}\left(\sum_{i=n}^\infty 2^{\frac{i}{2}} \mathds{1}_{(\frac{1}{2^{i+1}},\frac{1}{2^i}]}\right)\\
        &= \sum_{i=0}^{i=n-1} 2^{\frac{i}{2}}\cdot\frac{1}{2^{i+1}}\mathds{1}_{(0,1]} + \sum_{i=n}^\infty 2^{\frac{i}{2}-n}\mathds{1}_{(\frac{1}{2^{i-n+1}},\frac{1}{2^{i-n}}]} \\
        &= \frac{1}{2}\sum_{i=0}^{i=n-1} 2^{-\frac{i}{2}}\mathds{1}_{(0,1]} + \sum_{i=n}^\infty 2^{\frac{i}{2}-n}\mathds{1}_{(\frac{1}{2^{i-n+1}},\frac{1}{2^{i-n}}]}.
    \end{align*}
    Therefore,
    \begin{align*}
        \left\lVert Q_{K_{2^n}}p-\lVert p\rVert_2\mathds{1}_{[0,1]}\right\rVert_1 = &\left\lVert- \frac{1}{2}\sum_{i=n}^{\infty} 2^{-\frac{i}{2}}\mathds{1}_{(0,1]} + \sum_{i=n}^\infty 2^{\frac{i}{2}-n}\mathds{1}_{(\frac{1}{2^{i-n+1}},\frac{1}{2^{i-n}}]} \right\rVert_1 \\
        = &\left\lVert \sum_{i=n}^\infty \left(2^{\frac{i}{2}-n} - \frac{1}{2}\sum_{j=n}^{\infty} 2^{-\frac{j}{2}}\right)\mathds{1}_{(\frac{1}{2^{i-n+1}},\frac{1}{2^{i-n}}]} \right\rVert_1 \\
        = &\sum_{i=n}^\infty \left|2^{\frac{i}{2}-n}- \frac{1}{2}\sum_{j=n}^{\infty} 2^{-\frac{j}{2}}\right|\cdot\frac{1}{2^{i-n+1}}  \\
        = &\frac{2+\sqrt{2}}{4}\cdot2^{-\frac{n}{2}}.
    \end{align*}
    Thus $\lVert Q_{K_{2^n}}\frac{p}{\lVert p\rVert_2}-\mathds{1}\rVert_1\sim O(2^{-\frac{n}{2}})$ instead of $O(2^{-n})$. 
\end{remark}

\subsubsection{Proofs of Theorems \ref{thm:logistic_convergence} \& \ref{thm:generalised_logistic_convergence} }

We are now ready to translate these results back to the (generalised) logistic maps. We handle the proof of Theorem~\ref{thm:generalised_logistic_convergence} first, of which Theorem~\ref{thm:logistic_convergence} is a special case (after a suitable change of variables).

\begin{proof}[Proof of Theorem \ref{thm:generalised_logistic_convergence}]
    In fact, the $ L^1(I)$ norm on an interval $I$ coincides with the total variation distance between two measures on $I$:
    \begin{equation*}
        \delta(P,Q) \coloneq \sup_{B\in\mathcal{B}} |P(B)-Q(B)|,
    \end{equation*}
    where $\mathcal{B}$ is the Borel $\sigma$-algebra of $I$. It is clear by definition that the total variation distance is independent of any bijective and measurable change of variables. Moreover, it coincides with the $L^1$ distance when measures $P$ and $Q$ are absolutely continuous with density functions $p$ and $q$ \cite{tsybakov2009introduction}.
    \begin{equation*}
        \delta(P,Q) = \frac{1}{2}\int_I |p-q|dL=\frac{1}{2}\lVert p-q\rVert_{ L^1(I)}.
    \end{equation*}
    Thus, the $L^1$ distance is independent of any bijective and measurable change of variables.
    
    Recall the normalised dispersion relation \eqref{eq:normalised_free_dispersion}. This defines a bijective and measurable change of variables between $\Delta\in[-2,2]$ and $\kappa\in[-1,1]$, given by $\Delta=2\cos(\pi\kappa)$ and $\kappa = \cos^{-1}(\Delta/2)/\pi$. Moreover, the invariant density $D$ in \eqref{eq:delta_density} corresponds to a uniform $\kappa$ after this change of variables:
    \begin{equation*}
        \mathds{1}(\kappa)\cdot\left|\frac{\de\kappa}{\de\Delta}\right| = \left|\frac{\de}{\de\Delta}\left(\frac{1}{\pi}\cos^{-1}\left(\frac{\Delta}{2}\right)\right)\right| = \frac{1}{\pi}\frac{1}{\sqrt{4-\Delta^2}} = D(\Delta).
    \end{equation*}
    Suppose $p_{\kappa}$ is any probability density of $\kappa$ on $[0,1]$ and $p_{\Delta}$ is the corresponding density of $\Delta$ on $[-2,2]$, then
    \begin{equation*}
        \lVert p_{\kappa}-\mathds{1}\rVert_{ L^1[0,1]} = \lVert p_{\Delta}-D\rVert_{ L^1[-2,2]}.
    \end{equation*}
    Thus, our results about convergence of $\kappa_l$ immediately apply to the convergence of $\Delta_l$.

    Recall that $p_{\kappa_l}=Q_{K_l}p_\Lambda$ by Remark \ref{rmk:Q_interpretation}, where $p_\Lambda$ is obtained from $p_{\Delta_1}$ by the relation \eqref{eq:Lambda_Delta_relation}. Therefore, by Theorem \ref{thm:general_convergence},
    \begin{equation*}
        \lVert p_{\Delta_{l}}-D\rVert_{ L^1[-2,2]}=\lVert p_{\kappa_{l}}-\mathds{1}\rVert_{ L^1[0,1]} = \lVert Q_{K_l}p_\Lambda-\mathds{1}\rVert_{ L^1[0,1]} \rightarrow 0\quad\text{as $l\rightarrow\infty$.}
    \end{equation*}

    If further $p_{\Delta_1}$ has bounded variation, then $p_\Lambda\in BV[0,\infty)\cap L^1[0,\infty)$ and by Theorem \ref{thm:bv_convergence_rate},
    \begin{equation*}
        \lVert p_{\Delta_{l}}-D\rVert_{ L^1[-2,2]}= \lVert Q_{K_l}p_\Lambda-\mathds{1}\rVert_{ L^1[0,1]} \leq\frac{2\cdot\ess V(p_\Lambda)}{l}\quad\text{for all $l\in\mathbb{N}$.}
    \end{equation*}

    Going back to the recursion relation, we replace $l$ with the sequence $(l_n=m^n:n\in\mathbb{N})$. This gives us the desired convergence results with constant $\Tilde{c}=2\cdot\ess V(p_\Lambda)$.
\end{proof}

\begin{remark} \label{rmk:constant_initial}
    We can express the constant $\Tilde{c}$ in terms of the initial density $p_{\Delta_1}$. One way to do this is to pick $\Lambda$ such that $\Lambda=0$ outside of the first branch $[0,1]$, then $\Lambda=\cos^{-1}(\Delta_1/{2})/\pi$ and
    \begin{equation*}
        p_\Lambda(x)= p_{\Delta_1}(2\cos(\pi x))\cdot\left|\frac{\de\Delta_1}{\de\Lambda}(x)\right| = p_{\Delta_1}(2\cos(\pi x))\cdot \left|2\pi\sin(\pi x)\right|\quad\text{for $x\in[0,1].$}
    \end{equation*}
    A generous inequality is
    \begin{align*}
        V(p_\Lambda) &= \sup_{\mathcal{P}_x}\sum_{i=0}^{N_{\mathcal{P}_x}} |p_\Lambda(a_{i+1})-p_\Lambda(a_i)|\\
        &\leq \sup_{\mathcal{P}_x}\sum_{i=0}^{N_{\mathcal{P}_x}}2\pi|\sin(\pi x_{i+1})||p_{\Delta_1}(\Tilde{x}_{i+1})-p_{\Delta_1}(\Tilde{x}_{i})|+ 2\pi|\sin(\pi x_{i+1})-\sin(\pi x_i)||p_{\Delta_1}(\Tilde{x}_i)|\\
        &\leq2\pi\cdot V(p_{\Delta_1}) + 4\pi\cdot\sup\,p_{\Delta_1}
    \end{align*}
    where $\Tilde{x}=2\cos(\pi x)$ for simplicity. Note that $\sup p_{\Delta_1}$ is finite when it has bounded variation. Thus, 
    \begin{align*}
        \Tilde{c} = 2\cdot\ess V(p_\Lambda)\leq4\pi\cdot\ess V(p_{\Delta_1})+8\pi\cdot\ess\sup(p_{\Delta_1}).
    \end{align*}
\end{remark}

\begin{remark} \label{rmk:exponential_conv}
    In fact, the exponential convergence rate $O(m^{-n})$ works for a wider class than $p_{\Delta_1}\in BV[-2,2]$. Some functions that blow up at the edges with appropriate speed will have bounded variation after the change of variables. For example, even the invariant density $p_{\Delta_1}=D$ is unbounded at $-2$ and $2$, but the corresponding $p_\Lambda$ is piecewise linear and thus has bounded variation. Therefore, it is more informative to consider the variation of $p_\Lambda$ directly.
\end{remark}

\begin{proof}[Proof of Theorem \ref{thm:logistic_convergence}]
    This is a special case of Theorem \ref{thm:generalised_logistic_convergence}. Suppose $X_n = F^n(X_0)$ is a sequence that follows the logistic map with $r=4$. Recall that $X_n$ is related to $\Delta_{2^n}$ by $\Delta_{2^n} = 2-4X_n$ \eqref{eq:logistic_mandelbrot_conjugacy}. Again, since the total variation norm is independent of a bijective change of variables, 
    \begin{equation*}
        \lVert p_{X_n}-q\rVert_{L^1[0,1]} = \lVert p_{\Delta_{2^n}} - D\rVert_{L^1[-2,2]},
    \end{equation*}
    where $q$ is the invariant density of the logistic map \eqref{eq:logistic_invariant_measure}. Then, this reduces to the $m=2$ case in Theorem~\ref{thm:generalised_logistic_convergence}.
\end{proof}

The constant $c$ in Theorem \ref{thm:logistic_convergence} will be $2 \ess V(p_\Lambda)$, where $p_\Lambda$ is again the density of $\Lambda$ given by \eqref{eq:Lambda_Delta_relation}, and Remarks~\ref{rmk:constant_initial} and~\ref{rmk:exponential_conv} apply to this case as well.

\subsection{Explicit formulas} \label{sec:explicit_formulas}
In this section we explore another tangential consequence of our method: knowledge of exact solutions for the Hill operators can be used to derive explicit formulas for orbits of the recursion relations. Recall that logistic map with $r=4$ has the explicit formula \eqref{eq:explicit_formula_sine}. We will present a new proof of this famous result using Hill operators. Our method also provides a framework for generating other formulas and we present an example that involves Mathieu functions.

\subsubsection{The well-known formula}
Take the free Hill operator again, or any homogeneous $V\equiv a\in\mathbb{R}$. As solved in the last section, the dispersion relation is
\begin{equation} \label{eq:homogeneous_discriminant}
    \Delta_l(\lambda) =2\cos(\omega l),
\end{equation}
where $\omega=\sqrt{\lambda-a}$ for $\lambda\geq a$.

Recall that $(\Delta_{2^n}:n\in\mathbb{N})$ is related to the logistic map by the change of variables $x_n = \frac{1}{4}(2-\Delta_{2^n})$. Substituting this into \eqref{eq:homogeneous_discriminant} gives a formula for $x_n$ in terms of $\omega$:
\begin{align} \label{eq:1}
    \begin{split}
        x_n &= \frac{1}{4}(2-2\cos{2^n\omega})\\
        &=\frac{1}{4}[2-2(2\cos^2{(2^{n-1}\omega)}-1)]\\
        &=1-\cos^2{(2^{n-1}\omega)}\\
        &=\sin^2{(2^{n-1}\omega)}\quad\text{for all $n\geq1$.} 
    \end{split}
\end{align}
All that remains is to find $\omega$ in terms of the initial value $x_0$. Note that $x_0 = \frac{1}{4}(2-\Delta_1) = \frac{1}{4}(2-2\cos{\omega}) = \frac{1}{2}(1-\cos{\omega})$, so
\begin{equation*}
    \omega = 2\cdot\sin^{-1}(\sqrt{x_0}).
\end{equation*}
Substituting this expression for $\omega$ in \eqref{eq:1} gives the desired explicit formula \eqref{eq:explicit_formula_sine}.

\subsubsection{A new formula with Mathieu functions} \label{sec:Mathieu}

A benefit of this framework is that new formulas can be generated by choosing other periodic potential functions. In general, we are not able to characterise eigensolutions of the Hill operator explicitly. However, there are a few well studied exceptions. For example, if we instead choose $V(x) = \cos(2\pi x)$ which is 1-periodic, then the eigenvalue problem becomes
\begin{equation*}
    \frac{d^2y}{dx^2}+\cos(2\pi x)\cdot y = \lambda y.
\end{equation*}
After a change of variables $v =\pi x$, this becomes the Mathieu differential equation:
\begin{equation} \label{eq:Mathieu}
    \frac{d^2y}{dv^2} + \left(a-2q\cdot\cos{2v}\right)y=0\quad\text{where $a=-\frac{\lambda}{\pi^2}$ and $q=-\frac{1}{2\pi^2}$.}
\end{equation}

Solutions to Mathieu's equation \eqref{eq:Mathieu} are well studied  \cite{wolf2009mathieu}. Let $C(a,q,v)$ be the even solutions and $S(a,q,v)$ be the odd solutions. Then,
\begin{equation*}
    \varphi_\lambda(x) = \frac{C(a,q,v)}{C(a,q,0)} = \frac{C(a,q,\pi x)}{C(a,q,0)},\quad
    \psi_\lambda(x) = \frac{S(a,q,v)}{\pi S_v(a,q,0)} = \frac{S(a,q,\pi x)}{\pi S_v(a,q,0)}.
\end{equation*}
are the basic solutions satisfying the initial conditions \eqref{eq:basis_solutions_bc}. The monodromy matrix is
\begin{equation*}
    M_l(\lambda)\coloneq\begin{bmatrix}
\varphi_\lambda(l) & \psi_\lambda(l) \\
\varphi_\lambda'(l) & \psi_\lambda'(l) 
\end{bmatrix}
= \begin{bmatrix}
\frac{C(a,q,\pi l)}{C(a,q,0)} & \frac{S(a,q,\pi l)}{\pi S_v(a,q,0)} \\
\frac{\pi C_v(a,q,\pi l)}{C(a,q,0)} & \frac{S_v(a,q,\pi l)}{S_v(a,q,0)}
\end{bmatrix},
\end{equation*}
and the discriminant is
\begin{equation*}
    \Delta_{2^n}(\lambda) = \tr(M(2^n)) = \frac{C(a,q,2^n\pi)}{C(a,q,0)} + \frac{S_v(a,q,2^n\pi)}{S_v(a,q,0)}.
\end{equation*}

When $n=0$, substitute in the values of $a$ and $q$ from \eqref{eq:Mathieu}. This gives us a relation between $x_0$ and $\lambda$:
\begin{equation} \label{eq:mathieu_initial_relation}
    x_0 = \frac{1}{4}(2-\Delta_1) = \frac{1}{4}\left(2-\frac{C(-\frac{\lambda}{\pi^2},-\frac{1}{2\pi^2},\pi)}{C(-\frac{\lambda}{\pi^2},-\frac{1}{2\pi^2},0)} - \frac{S_v(-\frac{\lambda}{\pi^2},-\frac{1}{2\pi^2},\pi)}{S_v(-\frac{\lambda}{\pi^2},-\frac{1}{2\pi^2},0)}\right) \eqcolon f(\lambda).
\end{equation}
There exists $\lambda_0<0$ such that the function $f:[-\lambda_0,0]\rightarrow[0,1]$ is bijective (see Figure~\ref{fig:mathieu_initial_relation}). Therefore, the inverse $f^{-1}:[0,1]\rightarrow[-\lambda_0,0]$ is well-defined, and we have a new formula:
\begin{equation*}
    x_n = \frac{1}{4}(2-\Delta_{2^n}) = \frac{1}{4}\left(2-\frac{C(a,q,2^n\pi)}{C(a,q,0)} - \frac{S_v(a,q,2^n\pi)}{S_v(a,q,0)}\right),
\end{equation*}
where
\begin{equation*}
    a=-\frac{f^{-1}(x_0)}{\pi^2}, \quad q=-\frac{1}{2\pi^2}.
\end{equation*}

\begin{figure}
    \centering
    \includegraphics[width=0.4\linewidth]{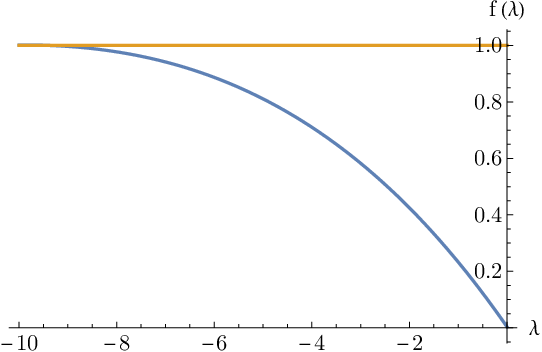}
    \caption{The graph of the relation $x_0=f(\lambda)$ in \eqref{eq:mathieu_initial_relation}. There exists $\lambda_0\in(-10,-9)$ such that $f(\lambda_0)=1$ and $f$ is a bijection from $[\lambda_0,0]$ to $[0,1]$.}
    \label{fig:mathieu_initial_relation}
\end{figure}

\section{Conjugacy to other maps} \label{sec:conjugacy}

To understand the wider implications of this work, it is useful to consider the other maps that can be obtained through suitable changes of variables. The increasingly steep linear peaks of the normalised spectral bands in Figure~\ref{fig:folding}(d)-(f) resemble the multiple tent map (as defined in \eqref{eq:K_l} and depicted in Figure~\ref{fig:folding}(d)-(f)). In fact, the process of normalising and linearising the spectral bands aligns with the idea of conjugacy in ergodic theory.

For general $m\in\mathbb{N}$, we define the multiple-tent map with $\frac{m}{2}$ ``tents'' on $[0,1]$ as in Figure \ref{fig:gm}:
\begin{equation} \label{eq:gm}
    g_m(x)\coloneq
    \begin{cases}
        mx-2j & \text{if $x\in[\frac{2j}{m},\frac{2j+1}{m}]$ for $j=0,1,...,\lfloor\frac{m-1}{2}\rfloor$}, \\
        -mx+2j & \text{if $x\in[\frac{2j-1}{m},\frac{2j}{m}]$ for $j=1,2,...,\lfloor\frac{m}{2}\rfloor$}.
    \end{cases}
\end{equation}
This is exactly $K_m$ \eqref{eq:K_l} restricted to $[0,1]$:
\begin{equation*}
    K_m(x)=g_m(\lfloor x\rfloor)\quad\text{for all $x\in\mathbb{R}$.}
\end{equation*}

\begin{figure}
    \centering
    \includegraphics[width=0.45\linewidth]{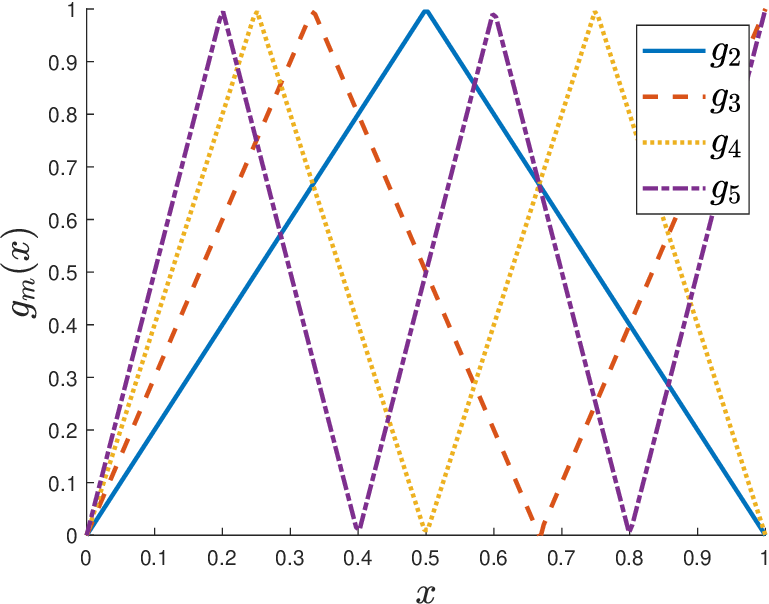}
    \caption{The multiple-tent map $g_m$ for $m=2,3,4,5$.}
    \label{fig:gm}
\end{figure}

\begin{theorem}
    The generalised logistic maps $f_m$ are conjugate to the multiple-tent maps $g_m$ through the bijection
    \begin{equation*}
    C(x) \coloneq 2\cos(\pi x)
    \end{equation*}
    for all $m\in\mathbb{N}$.
\end{theorem}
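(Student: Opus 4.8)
The plan is to verify directly that $C$ conjugates $g_m$ to $f_m$, i.e. that $C \circ g_m = f_m \circ C$ as functions on $[0,1]$, and that $C$ is a bijection onto its image $[-2,2]$. The bijectivity of $C(x) = 2\cos(\pi x)$ from $[0,1]$ to $[-2,2]$ is immediate since $\cos$ is strictly monotone decreasing on $[0,\pi]$. The substance is the functional identity, and the natural way to see it is to introduce the quasi-momentum relation: recall from \eqref{eq:normalised_free_dispersion} that $\Delta_l = 2\cos(\pi\kappa_l)$, and from \eqref{eq:MatrixPowers} and \eqref{eq:generalised_logistic_def} that $\Delta_{m^{n+1}} = f_m(\Delta_{m^n})$, while doubling/$m$-tupling the unit cell corresponds exactly to applying $K_m$ (hence $g_m$ on $[0,1]$) to the normalised quasi-momentum. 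So $f_m$ acting on $\Delta = 2\cos(\pi x)$ should match $2\cos(\pi\,g_m(x))$.

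Concretely, I would first establish the trigonometric identity that underpins everything: for all $x \in \mathbb{R}$,
\begin{equation*}
    f_m(2\cos\theta) = 2\cos(m\theta).
\end{equation*}
This is just the statement that $f_m$ is (up to the scaling $x \mapsto 2\cos\theta$) the degree-$m$ Chebyshev polynomial; it follows from the trace formula \eqref{eq:generalised_logistic_def} applied to a rotation matrix with eigenvalues $e^{\pm i\theta}$ (whose trace is $2\cos\theta$, determinant $1$, and $m$-th power has trace $2\cos(m\theta)$), which is precisely the computation already carried out for the free Hill operator where $\Delta_l(\lambda) = 2\cos(l\sqrt\lambda)$ and $\Delta_{m^{n+1}} = f_m(\Delta_{m^n})$. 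Then $f_m(C(x)) = f_m(2\cos(\pi x)) = 2\cos(m\pi x)$.

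It remains to check that $2\cos(m\pi x) = 2\cos(\pi\, g_m(x)) = C(g_m(x))$ for $x \in [0,1]$. Since $\cos$ is even and $2\pi$-periodic, $2\cos(m\pi x) = 2\cos(\pi\phi(x))$ where $\phi(x)$ is the distance from $mx$ to the nearest even integer — and one checks from the definition \eqref{eq:gm} that $g_m(x)$ is exactly this ``folded'' value: on $[\tfrac{2j}{m}, \tfrac{2j+1}{m}]$ we have $g_m(x) = mx - 2j \in [0,1]$ so $\cos(\pi g_m(x)) = \cos(\pi(mx-2j)) = \cos(m\pi x)$, and on $[\tfrac{2j-1}{m},\tfrac{2j}{m}]$ we have $g_m(x) = 2j - mx \in [0,1]$ so $\cos(\pi g_m(x)) = \cos(2\pi j - m\pi x) = \cos(m\pi x)$. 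This case analysis over the pieces of $g_m$ is elementary and is the only real bookkeeping in the argument. I expect the main (mild) obstacle to be nothing more than getting the index ranges in \eqref{eq:gm} to cover $[0,1]$ without gaps or overlaps and confirming the endpoints match up; the conceptual content — that conjugation by $2\cos(\pi\cdot)$ turns the Chebyshev-type polynomial $f_m$ into the piecewise-linear fold $g_m$ — is exactly the linearisation of spectral bands already described around Figure~\ref{fig:folding}.
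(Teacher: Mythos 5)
Your proposal is correct and takes essentially the same route as the paper: both obtain the identity $f_m(2\cos k)=2\cos(mk)$ from the free-operator discriminant recursion (equivalently, the trace formula for a determinant-one matrix with eigenvalues $e^{\pm i k}$) and then identify $2\cos(m\pi x)$ with $2\cos(\pi g_m(x))$. Your piecewise case analysis over the branches of $g_m$ merely spells out the folding step that the paper compresses into ``substituting the cosine terms by $C$''.
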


\begin{proof}
    Substituting the dispersion relation \eqref{eq:dispersion} to the definition of $f_m$ \eqref{eq:generalised_logistic_def} gives
    \begin{equation} \label{eq:conjugacy}
        2\cos(mk) = \Delta_m = f_m(\Delta_1) = f_m(2\cos k)\quad\text{for all $k\in\mathbb{R}$.}
    \end{equation}
    Then, substituting the cosine terms by $C$ gives
    \begin{equation*}
        C\circ g_m(x) = f_m\circ C(x)\quad\text{for all $x\in[0,1]$.}
    \end{equation*}
\end{proof}

It is clear that the invariant measure of $g_m$ is uniform $\mathcal{U}[0,1]$. Thanks to the similarity between $g_m$ and $K_m$, our convergence results in Section \ref{sec:convergence} all apply for the multiple-tent map, after small modifications to the domain. Moreover, the generalised logistic maps $f_m$ are also conjugate to the Chebyshev polynomials of the first kind, which are defined as multiple-angle formulas for cosine:
\begin{equation*}
    T_m(\cos k) = \cos{mk}\quad\text{for $m\in\mathbb{N}$}.
\end{equation*}

\begin{theorem}
    $f_m$ is conjugate to $T_m$ by $E(x) = 2x$ for all $m\in\mathbb{N}$, i.e. $f_m$ is a rescaled version of $T_m$.
\end{theorem}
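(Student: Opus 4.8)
The plan is to read off the identity directly from the dispersion relation already established in the proof of the previous theorem. Recall that substituting \eqref{eq:dispersion} into the definition \eqref{eq:generalised_logistic_def} gave the polynomial identity
\begin{equation*}
    2\cos(mk) = f_m(2\cos k)\quad\text{for all }k\in\mathbb{R},
\end{equation*}
see \eqref{eq:conjugacy}. On the other hand, the Chebyshev polynomial of the first kind is \emph{defined} by the multiple-angle formula $T_m(\cos k)=\cos(mk)$. Combining these two relations yields $f_m(2\cos k) = 2\cos(mk) = 2T_m(\cos k)$ for every $k\in\mathbb{R}$.

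Next I would make the change of variable $x=\cos k$. As $k$ ranges over $\mathbb{R}$, $x$ ranges over the whole interval $[-1,1]$, so we obtain $f_m(2x) = 2T_m(x)$ for all $x\in[-1,1]$. Since both $x\mapsto f_m(2x)$ and $x\mapsto 2T_m(x)$ are polynomials and they agree on an interval (hence on infinitely many points), they agree as polynomials, so in fact
\begin{equation*}
    f_m(2x) = 2T_m(x)\quad\text{for all }x\in\mathbb{R}.
\end{equation*}
With $E(x)=2x$, this reads $f_m\circ E = E\circ T_m$, and since $E:\mathbb{R}\to\mathbb{R}$ (equivalently $E:[-1,1]\to[-2,2]$) is a bijection, this is precisely the statement that $E$ conjugates $T_m$ to $f_m$.

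There is essentially no hard step here; the construction of the previous theorem already did the work. The only point deserving a sentence of care is the passage from the identity on $[-1,1]$ to an identity on all of $\mathbb{R}$, which is justified by the fact that two polynomials agreeing on an interval coincide everywhere — this also makes clear that the conjugacy respects the natural dynamical domains, $T_m$ on $[-1,1]$ being carried to $f_m$ on $[-2,2]$. If desired, one can additionally remark that the invariant density $\tfrac{1}{\pi}(1-x^2)^{-1/2}$ of $T_m$ on $[-1,1]$ pushes forward under $E$ to the density of states \eqref{eq:delta_density}, consistent with Theorem~\ref{thm:generalised_logistic_convergence}.
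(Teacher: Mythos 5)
Your proposal is correct and follows essentially the same route as the paper, which simply observes that the identity \eqref{eq:conjugacy}, $2\cos(mk)=f_m(2\cos k)$, combined with the defining multiple-angle formula $T_m(\cos k)=\cos(mk)$, yields $f_m\circ E=E\circ T_m$. You have merely spelled out the change of variables $x=\cos k$ and the (correct, if routine) polynomial-identity extension that the paper leaves implicit.
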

\begin{proof}
    This is also clear from \eqref{eq:conjugacy} which resembles the multiple-angle formula.
\end{proof}

One could define general Chebyshev maps that are no longer polynomials by taking any $m>1$ instead of $m\in\mathbb{N}$. The invariant measures of such Chebyshev maps are studied in \cite{boyarsky2001invariant}.

\section{Lyapunov exponents} \label{sec:lyapunov}

When studying chaotic dynamics, the Lyapunov exponent is an important quantity that 
measures the exponential divergence of nearby trajectories. In this section, we will explore the Lyapunov exponents of the generalised logistic maps. We follow the definitions given in \cite{hilborn2000quantifying}. In a deterministic dynamical system with discrete time, $x_{n+1}=f(x_n)$, the local Lyapunov exponent is given by
\begin{equation*}
    \lambda(x) = \log |f'(x)|,
\end{equation*}
when $f'$ is defined. Then, the more well-known average Lyapunov exponent is given by
\begin{equation} \label{eq:average_lyapunov}
    \lambda = \int_M\lambda(x)\cdot p(x)dx,
\end{equation}
where $M$ is the domain of $f$ and $p$ is an invariant density of $f$.

For the generalised logistic map of degree $m$, the average Lyapunov exponent is
\begin{equation*}
    \begin{split}
        \lambda_m &= \int_{-2}^2 \log|f_m'(x)|\cdot D(x)dx \\
        &= \int_{-2}^2 \log|m\prod_{i=0}^{m-1}(x-a_{m,i})|\cdot D(x)dx \\
        &= \log m + \sum_{i=0}^{m-1}I(a_{m,i}),\\
    \end{split}
\end{equation*}
where $D$ is our invariant density \eqref{eq:delta_density} and
\begin{equation}
    I(a) = \int_{-2}^2 \log|x-a|\cdot D(x)dx = \frac{1}{\pi}\int_{-2}^2\frac{\log|x-a|}{\sqrt{4-x^2}}dx.
\end{equation}
After the change of variables $x=2\sin y$, this can also be written as
\begin{equation} \label{eq:Ia}
        I(a) = \frac{1}{\pi}\int_{-\pi /2}^{\pi/2}\frac{\log|2\sin y-a|}{2\cos y}\cdot2\cos y \,\de y 
        = \frac{1}{\pi}\int_{-\pi/2}^{\pi/2} \log|2\sin y-a|\,\de y.
\end{equation}
We observe that $I(a)$ vanishes for $a\in(-2,2)$, as shown in Figure \ref{fig:vanishing_integrals}. Verifying that $I(0)=0$ is relatively straightforward, based on standard integrals. Once we have computed the Lyapunov exponent, below, we will also be able to show that $I(\pm1)=0$, see Appendix~\ref{app:integrals} for details.

\begin{figure}
    \centering
    \includegraphics[width=0.5\linewidth]{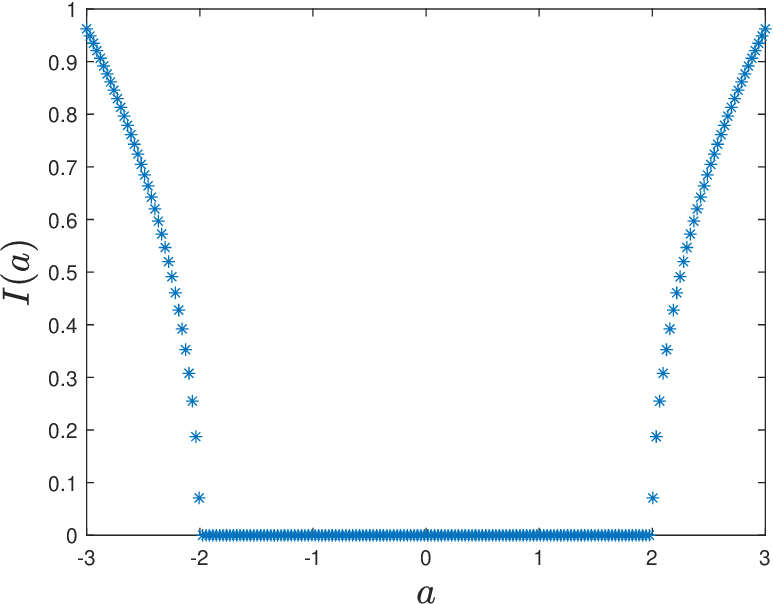}
    \caption{$I(a)$, as defined in \eqref{eq:Ia}, calculated numerically using MATLAB's built-in adaptive quadrature method. The function $I$ is even and vanishes for $-2\leq a\leq2$.}
    \label{fig:vanishing_integrals}
\end{figure}

We can use the conjugacy discussed in Section~\ref{sec:conjugacy} to estimate the Lyapunov exponents. We will assume that the Lyapunov exponent is invariant under the conjugacy $C$. This is a reasonable assumption since \cite{eichhorn2001lyapunov} proved that in continuous dynamical systems, the Lyapunov exponent is invariant under an invertible transformation. \cite{kuznetsov2016invariance} proved this for diffeomorphisms and claimed that similar results are true for discrete systems. In our case the conjugacy $C$ is bijective and smooth on $(0,1)$. Therefore, we assume that the Lyapunov exponent for $f_m$ and that for $g_m$ are the same.

\begin{theorem} \label{thm:lyapunov}
    The average Lyapunov exponent of both $f_m$ and $g_m$ is
    \begin{equation*}
        \lambda_m = \log m.
    \end{equation*}
\end{theorem}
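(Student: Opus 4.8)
\subsubsection{Proof of Theorem~\ref{thm:lyapunov}}

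The plan is to establish the identity first for the multiple-tent map $g_m$, where the derivative and the invariant density are both completely explicit, and then to transfer the result to $f_m$ via the conjugacy $C$ of Section~\ref{sec:conjugacy} together with the assumed invariance of the average Lyapunov exponent under smooth invertible changes of variables. For $g_m$ the computation is immediate: by its definition \eqref{eq:gm}, $g_m$ is piecewise linear with slope $\pm m$ on each interval $[\tfrac{j}{m},\tfrac{j+1}{m}]$, so $\log|g_m'(x)| = \log m$ for almost every $x\in[0,1]$, and since its invariant density is the uniform density $p\equiv 1$ on $[0,1]$, formula \eqref{eq:average_lyapunov} yields
\begin{equation*}
    \int_0^1 \log|g_m'(x)|\cdot p(x)\,dx = \int_0^1 \log m \,dx = \log m .
\end{equation*}

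Next I would transfer this value to $f_m$. Recall that $C(x) = 2\cos(\pi x)$ is a bijection from $[0,1]$ onto $[-2,2]$ which is smooth with non-vanishing derivative on the open interval $(0,1)$, that it conjugates $g_m$ to $f_m$ (that is, $f_m\circ C = C\circ g_m$), and that it pushes the uniform measure on $[0,1]$ forward to the density $D$ of \eqref{eq:delta_density} (this change-of-variables computation is exactly the one carried out in the proof of Theorem~\ref{thm:generalised_logistic_convergence}). Invoking the invariance of the average Lyapunov exponent under such conjugacies --- which, as discussed above, we assume following \cite{eichhorn2001lyapunov, kuznetsov2016invariance} --- we conclude that the average Lyapunov exponent of $f_m$ equals that of $g_m$, namely $\log m$. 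This is precisely the claim of the theorem for both maps.

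The delicate point is the behaviour of $C$ at the two endpoints $x = 0, 1$: there $C'(x) = -2\pi\sin(\pi x)$ vanishes while the target density $D$ develops its $(4-\Delta^2)^{-1/2}$ singularities, so $C$ is not a diffeomorphism of the closed interval and the cited invariance statements do not apply there verbatim; one must check that arbitrarily small neighbourhoods of the endpoints contribute negligibly to the integral in \eqref{eq:average_lyapunov}, using the integrability of $\log|f_m'|$ against $D$. As a purely analytic alternative that avoids the conjugacy altogether, one can factor $f_m'(x) = m\prod_{i=1}^{m-1}(x-c_i)$, where the critical points $c_i$ of $f_m$ lie in $(-2,2)$ because $f_m$ is a rescaled Chebyshev polynomial of the first kind, whence $\lambda_m = \log m + \sum_{i=1}^{m-1} I(c_i)$; the result then follows from the classical fact that the logarithmic potential $I(a) = \frac{1}{\pi}\int_{-2}^2 \frac{\log|x-a|}{\sqrt{4-x^2}}\,dx$ of the arcsine equilibrium measure of $[-2,2]$ vanishes for all $a\in[-2,2]$, equivalently that $[-2,2]$ has logarithmic capacity $1$. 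I would nevertheless present the conjugacy argument as the primary proof, since in this paper the special values $I(\pm 1) = 0$ are deduced \emph{from} Theorem~\ref{thm:lyapunov} rather than used to establish it.
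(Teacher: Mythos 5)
Your proposal follows essentially the same route as the paper: compute the exponent for the piecewise-linear map $g_m$ directly (slope $\pm m$ almost everywhere against the uniform invariant density) and transfer it to $f_m$ via the conjugacy $C$ under the assumed invariance of the Lyapunov exponent, which is exactly the paper's argument. Your additional remarks --- the caveat about the vanishing of $C'$ at the endpoints and the alternative proof via the vanishing logarithmic potential of the arcsine measure (capacity of $[-2,2]$ equal to $1$) --- go beyond what the paper records, and correctly note that the paper deduces $I(\pm 1)=0$ from the theorem rather than the reverse.
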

\begin{proof}
    Since $g_m$ is piecewise linear with slope $\pm m$, its local Lyapunov exponent is $\log m$ on the entire domain $[0,1]$, except the $m-1$ jump discontinuity points. Thus, the average in \eqref{eq:average_lyapunov} is also $\log m$.
\end{proof}

The Lyapunov exponent is often used as a measure of chaos. For example, relations between the Lyapunov exponent and concepts like mixing and decorrelation are proven in \cite{collet2004liapunov,butkovskii2008mixing}. In our case, it is easy to see that the convergence rates derived in Theorems~\ref{thm:logistic_convergence} and~\ref{thm:generalised_logistic_convergence} agree with the average Lyapunov exponent from Theorem~\ref{thm:lyapunov}, since $m^{-n} = \exp(-n\log(m)) = \exp(-n\lambda_m)$.

We can verify our predictions of the convergence rate numerically. As a demonstrative example, we computed the distributions of the first few $X_n=f_m^n(X_0)$ when $X_0\sim \Gamma(1,1)-2$ (\emph{i.e.} $X+2$ satisfies the gamma distribution $\Gamma(1,1)$). In Figure~\ref{fig:convergence_rate} the Wasserstein distance is used to compare the distribution of $X_n$ with the invariant measure. For each different degree $m$, we observe exponential convergence before the error stabilises. The rate of the exponential convergence varies with the degree $m$. This is shown in Figure~\ref{fig:slopes}, from which we can see that it is proportional to $\log m$, as predicted by both our convergence result Theorem~\ref{thm:generalised_logistic_convergence} and the average Lyapunov exponent from Theorem~\ref{thm:lyapunov}.

\begin{figure}
    \centering
    \begin{subfigure}[b]{0.45\textwidth}
        \centering
        \includegraphics[width=\linewidth]{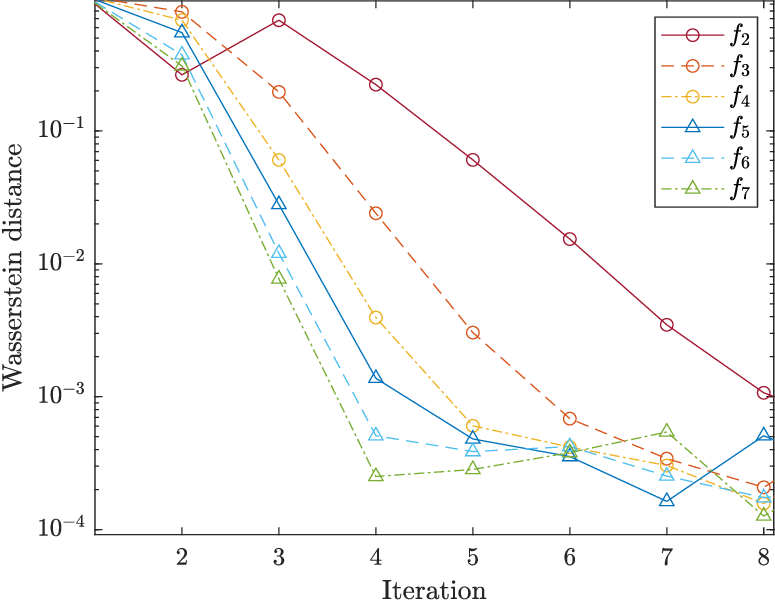}
        \caption{} \label{fig:convergence_rate}
    \end{subfigure}
    \hspace{0.75cm}
    \begin{subfigure}[b]{0.4\textwidth}
        \centering
        \includegraphics[width=\linewidth]{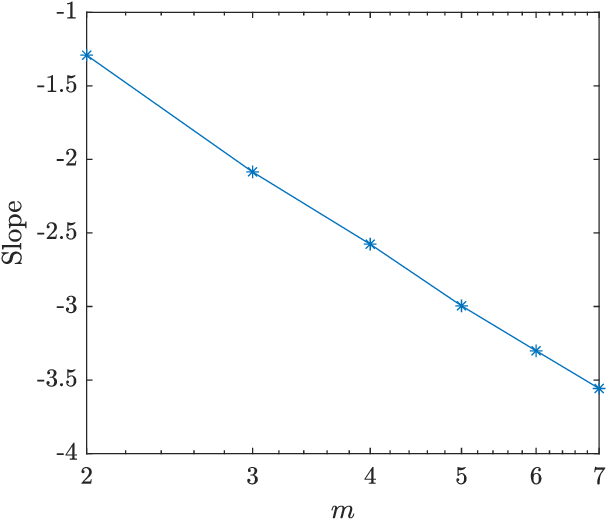}
        \caption{} \label{fig:slopes}
    \end{subfigure}
    \caption{The rate of convergence to the invariant measure of the generalised logistic maps $X_n=f_m^n(X_0)$. Initial values are drawn from $10^8$ independent realisations of the distribution $\Gamma(1,1)-2$ and the Wasserstein distance is used to compare with the invariant measure. (a) shows the convergence for the first eight iterations for different degrees $m=2,3,\dots 7$ of generalised logistic map. The $y$-axis has a logarithmic scale and each curve has a linear region (corresponding to exponential convergence). (b) shows the rate of exponential convergence (\emph{i.e.} the slope of the linear region in (a)) as a function of the degree $m$. This relationship agrees with the logarithmic behaviour predicted by Theorem~\ref{thm:generalised_logistic_convergence}, corroborated by the Lyapunov exponents $\lambda_m=\log m$ in Theorem~\ref{thm:lyapunov}.}
\end{figure}

\section{Concluding remarks}

We have explored the approach of studying nonlinear recursion relations by constructing an associated sequence of periodic operators, which are related to the recursion relation via their discriminants. We demonstrated this approach, which is well established for niche examples of discrete-time dynamical systems \cite{kohmoto1983localization, davies2024vanhove}, by developing it for the canonical examples of the logistic map and the Chebyshev polynomials of the first kind. In these cases, the sequence can be constructed by artificially inflating unit cells for a single periodic potential (thereby folding the spectral bands). This approach provides an immediate candidate for the invariant measure and facilitates a direct proof of convergence in $L^1$ space, along with an explicit estimate of the convergence rates.

The framework developed in this work offers a new way to think about the statistics of chaotic discrete-time dynamical systems. It generates new intuition by \emph{e.g.} linking the singularities in the invariant measures to Van Hove singularities in the densities of states of periodic elliptic operators. It also provides a systematic approach for generating candidate invariant measures and explicit formulas. While these are both well known for the case of the logistic map, this can be generalised to settings where the invariant measure is less obvious (as in \cite{davies2024vanhove}) or can be used to generate novel formulas (as in Section~\ref{sec:Mathieu}).

Within the realm of the logistic map (and generalised logistic maps), there are several open questions which this framework could be used to address in future. When $r<4$, there are no known analytic expressions for the absolutely continuous invariant measures, however they have been observed to have the same singularities \cite{hall2018properties, diaz-ruelas2021logistic}, suggesting a similar underlying link to Van Hove singularities. As a possible extension of this, random logistic maps (where the parameter $r$ is allowed to vary randomly) have been studied and shown to admit a well-defined invariant measure \cite{bhattacharya2004stability, ayers2024noisy, athreya2000random}.

\appendix
\section{Proof of convergence using ergodic theory} \label{app:ergodic}

In this appendix, we apply results from ergodic theory to prove convergence to the absolutely continuous invariant measure for the chaotic logistic map, as stated in Theorem~\ref{thm:ergodic}. For the sake of notation, the most straightforward formulation of this argument is to consider the multiple-tent map $g_m$ defined in \eqref{eq:gm}, which was shown to be conjugate to the generalised logistic maps in Section~\ref{sec:conjugacy}. Since $g_m$ is a piecewise linear one-dimensional recursion, we can use the results presented in \cite{boyarsky1997laws}. In particular, we make the following definitions
\begin{definition} \label{def:mixing}
    Consider a probability measure space $(X,\mathcal{B},\mu)$ and a measurable recursion relation $\tau:X\rightarrow X$ such that $\mu$ is an invariant measure of $\tau$ (or $\mu$ is $\tau$-invariant). Then, $(\tau, \mu)$ is said to be
    \begin{enumerate}[(i)]
    \item \emph{strongly mixing} if for all $A$, $B$ in the $\sigma$-algebra $\mathcal{B}$,
    \begin{equation}
        \mu(\tau^{-n}A\cap B) \rightarrow \mu(A)\mu(B)\quad\text{as }n\rightarrow \infty;
    \end{equation}
    \item \emph{weakly mixing} if for all $A$, $B$ in the $\sigma$-algebra $\mathcal{B}$,
    \begin{equation}
        \frac{1}{n}\sum_{i=0}^{n-1}|\mu(\tau^{-1}A\cap B) - \mu(A)\mu(B)| \rightarrow 0 \quad\text{as } n \rightarrow \infty.
    \end{equation}
    \end{enumerate}
\end{definition}

Strongly mixing implies weakly mixing, which implies ergodicity: $\tau$ is ergodic if for any $B\in\mathcal{B}$ such that $\tau^{-1}B=B$, $\mu(B)=0$ or $\mu(X\setminus B)=0$.

We will demonstrate that the multiple-tent maps $g_m$ are strongly mixing and use this to prove convergence based on the following two results. The first theorem establishes weak convergence for any initial distribution that is absolutely continuous with respect to $\mu$, as in Theorem \ref{thm:ergodic}.

\begin{theorem} \label{thm:weak_convergence} \cite[Proposition~4.2.12]{boyarsky1997laws}
    Let $\tau:I\rightarrow I$ be strongly mixing on the probability measure space $(I,\mathcal{B}, \mu)$ and $P_\tau:I\rightarrow I$ be its Frobenius-Perron operator. Let $p\in L^1(I)$ be a probability density function. Then on any $B\in\mathcal{B}$, it holds that
    \begin{equation*}
        \int_B P_{\tau^n} p\,d\mu \rightarrow \mu(B)\quad\text{as $n\rightarrow\infty$.}
    \end{equation*}
\end{theorem}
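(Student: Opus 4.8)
The plan is to exploit the duality between the Frobenius–Perron operator $P_\tau$ and the Koopman (composition) operator $U_\tau g = g\circ\tau$, namely $\int_I (P_\tau f)\,g\,d\mu = \int_I f\,(g\circ\tau)\,d\mu$ for all $f\in L^1(I)$ and $g\in L^\infty(I)$. Since $P_{\tau^n}=(P_\tau)^n$ and $U_{\tau^n}=(U_\tau)^n$, iterating this identity gives $\int_I (P_{\tau^n}f)\,g\,d\mu = \int_I f\,(g\circ\tau^n)\,d\mu$. Fixing $B\in\mathcal{B}$ and taking $g=\mathds{1}_B\in L^\infty(I)$, the quantity we must control becomes
\begin{equation*}
    \int_B P_{\tau^n}p\,d\mu = \int_I p\cdot(\mathds{1}_B\circ\tau^n)\,d\mu = \int_{\tau^{-n}B} p\,d\mu .
\end{equation*}

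First I would establish the claim for a normalised indicator density $p = \mathds{1}_A/\mu(A)$ with $\mu(A)>0$. For such $p$ the right-hand side equals $\mu(\tau^{-n}B\cap A)/\mu(A)$, which converges to $\mu(B)\mu(A)/\mu(A)=\mu(B)$ directly from the definition of strong mixing in Definition~\ref{def:mixing}(i). By linearity this extends immediately to any non-negative simple function $s=\sum_{i} c_i\mathds{1}_{A_i}$, giving $\int_B P_{\tau^n}s\,d\mu=\sum_i c_i\,\mu(\tau^{-n}B\cap A_i)\to \mu(B)\sum_i c_i\mu(A_i)=\mu(B)\int_I s\,d\mu$ as $n\to\infty$.

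The final step is an $\varepsilon/3$-type approximation argument, using that $P_{\tau^n}$ is a positive linear contraction of $L^1(I)$ (the $L^1$ analogue of Lemma~\ref{lm:Qproperties}, standard for Frobenius–Perron operators, cf. \cite{boyarsky1997laws}). Given $\varepsilon>0$, choose a non-negative simple function $s$ with $\lVert p-s\rVert_{L^1(I)}<\varepsilon$; then automatically $\left|\int_I s\,d\mu-1\right|=\left|\int_I s\,d\mu-\int_I p\,d\mu\right|<\varepsilon$. Splitting
\begin{equation*}
    \left|\int_B P_{\tau^n}p\,d\mu-\mu(B)\right|\leq \left|\int_B P_{\tau^n}(p-s)\,d\mu\right| + \left|\int_B P_{\tau^n}s\,d\mu-\mu(B)\!\int_I s\,d\mu\right| + \mu(B)\left|\int_I s\,d\mu-1\right|,
\end{equation*}
the first term is at most $\lVert P_{\tau^n}(p-s)\rVert_{L^1(I)}\leq\lVert p-s\rVert_{L^1(I)}<\varepsilon$, the last term is at most $\varepsilon$, and the middle term tends to $0$ by the simple-function case, hence lies below $\varepsilon$ for all $n$ large enough. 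Letting $\varepsilon\to0$ yields the result.

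I do not expect a genuine obstacle: the argument is the routine passage from the set-wise mixing condition to a statement about densities. The only points needing care are that the Frobenius–Perron/Koopman duality iterates correctly to $P_{\tau^n}=(P_\tau)^n$, and that the contraction property $\lVert P_\tau\rVert_{L^1\to L^1}\leq1$ holds — both of which rely on the $\tau$-invariance of $\mu$ (in particular $P_\tau\mathds{1}=\mathds{1}$) and are classical facts recorded in \cite{boyarsky1997laws}.
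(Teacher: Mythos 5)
Your proposal is correct, and it is essentially the standard argument: the paper does not prove this statement itself but cites it as \cite[Proposition~4.2.12]{boyarsky1997laws}, whose proof proceeds exactly as you do, via the Koopman--Frobenius--Perron duality $\int_B P_{\tau^n}p\,d\mu=\int_{\tau^{-n}B}p\,d\mu$, the strong-mixing definition applied to indicator densities, linearity for simple functions, and an $L^1$ approximation using the contraction property of $P_\tau$. No gaps to flag; note only that the first error term can be bounded even more directly by $\int_{\tau^{-n}B}|p-s|\,d\mu\leq\lVert p-s\rVert_{L^1(\mu)}$ without invoking the contraction property explicitly.
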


To show further the strong convergence (with an exponential convergence rate), we need to consider $BV(I)$ the space of bounded variation functions on $I$, with the norm
\begin{equation*}
    \lVert p \rVert_{BV} = \lVert p \rVert_{ L^1} + \inf_{\Tilde{p}=p\text{ a.e.}}V_I(\Tilde{p}),
\end{equation*}
where $V_I(p)$ is the variation of $p$ on $I$. It should be noted that the result does not apply directly to the logistic map $F$, since it has an invariant density \eqref{eq:logistic_invariant_measure} of unbounded variation.

\begin{theorem} \cite[Theorem~8.3.1]{boyarsky1997laws} \label{thm:strong_convergence}
    Let $\tau:I\rightarrow I$ be piecewise monotonic and $\mu = p\cdot L$ be an invariant measure of $\tau$, that is absolutely continuous with respect to the Lebesgue measure $L$. If $(\tau,\mu)$ is weakly mixing and $\mu$ is the unique absolutely continuous invariant measure of $\tau$, then there exists a positive constant $c$ and a constant $0<r<1$ such that for any $q\in BV(I)$,
    \begin{equation}
        \left\lVert P_{\tau}^n q-\left(\int_I q\,d\mu\right)\,p\right\rVert_{BV} \leq cr^n\lVert q \rVert_{BV}.
    \end{equation}
\end{theorem}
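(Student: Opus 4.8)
The plan is to derive this from the spectral theory of the Frobenius--Perron operator $P_\tau$ acting on $BV(I)$: the statement is exactly the assertion that $P_\tau$ has a spectral gap on $BV(I)$, with $1$ a simple isolated eigenvalue (eigenfunction $p$) and all other spectrum inside a disc of some radius $r<1$. I would assemble the argument from three classical ingredients --- a Lasota--Yorke (Doeblin--Fortet) inequality, a quasi-compactness theorem of Ionescu-Tulcea--Marinescu type, and the translation of the weak-mixing hypothesis into a statement about the peripheral spectrum of $P_\tau$.

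First I would establish the Lasota--Yorke inequality: there exist $N\in\mathbb{N}$, $0<\alpha<1$ and $\beta>0$ with
\[
    V_I(P_\tau^N q)\le\alpha\,V_I(q)+\beta\,\lVert q\rVert_{L^1(I)}\qquad\text{for all }q\in BV(I),
\]
together with the elementary contraction $\lVert P_\tau q\rVert_{L^1(I)}\le\lVert q\rVert_{L^1(I)}$. This is where the piecewise-expanding regularity underlying Theorem~8.3.1 (uniform expansion, finitely many monotone branches, bounded distortion) enters: one expands $P_\tau^N q$ over the branches of $\tau^N$ by the change-of-variables formula and bounds the variation of each term, the factor $\alpha<1$ coming from the uniform expansion. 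Since the inclusion $BV(I)\hookrightarrow L^1(I)$ is compact (Helly's selection theorem), the Ionescu-Tulcea--Marinescu theorem --- equivalently Hennion's theorem together with Nussbaum's formula for the essential spectral radius --- then shows that $P_\tau$ is quasi-compact on $BV(I)$ with essential spectral radius at most $\alpha^{1/N}<1$. As $P_\tau$ is an $L^1(I)$-contraction with the fixed point $p\in BV(I)$, its spectral radius on $BV(I)$ equals $1$, so its spectrum on the unit circle $\{|z|=1\}$ is a finite set of eigenvalues of finite multiplicity.

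It then remains to identify this peripheral spectrum. Positivity of $P_\tau$ together with the \emph{uniqueness} of the absolutely continuous invariant measure $\mu=p\cdot L$ forces the eigenvalue $1$ to be algebraically simple, with one-dimensional eigenspace $\mathrm{span}(p)$ and no associated Jordan block (the latter because $P_\tau$ preserves $\int_I q$, whereas $\int_I p=1\neq0$). Weak mixing of $(\tau,\mu)$ then excludes every other unimodular eigenvalue: by positivity, the modulus of a corresponding eigenfunction is a multiple of $p$, which turns it into an $L^2(\mu)$-eigenfunction of the Koopman operator $U_\tau$ at $\lambda$, contradicting that weak mixing leaves $1$ as the only eigenvalue of $U_\tau$. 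Hence $\sigma(P_\tau)\cap\{|z|=1\}=\{1\}$. Writing $\Pi$ for the rank-one Riesz projection at $1$, we get $P_\tau=\Pi+R$ with $\Pi R=R\Pi=0$ and the spectral radius of $R$ equal to some $\rho<1$; since the range of $\Pi$ is $\mathrm{span}(p)$ and $\Pi$ commutes with $P_\tau$, one has $\Pi q=\phi(q)\,p$ for the normalised $P_\tau$-invariant continuous linear functional $\phi$, which in the conventions of \cite{boyarsky1997laws} is $q\mapsto\int_I q\,d\mu$. Finally, choosing any $r\in(\rho,1)$, the Neumann series gives a constant $c>0$ with $\lVert R^n\rVert_{BV}\le c\,r^n$, so that
\[
    \lVert P_\tau^n q-\big(\int_I q\,d\mu\big)\,p\rVert_{BV}=\lVert R^n q\rVert_{BV}\le c\,r^n\,\lVert q\rVert_{BV}\qquad\text{for all }q\in BV(I),
\]
which is the claim.

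The hard part will be the Lasota--Yorke estimate and the peripheral-spectrum analysis; the remaining ingredients (Helly's theorem, Ionescu-Tulcea--Marinescu, the Neumann series) are routine functional analysis. The Lasota--Yorke inequality is the technical heart and is genuinely sensitive to the exact regularity hypotheses --- in particular, passing to an iterate $\tau^N$ and running a careful bounded-distortion argument is typically needed to control the jumps in variation created at the partition points and on short branches. For the piecewise-linear maps $g_m$ of interest in this paper these subtleties vanish: each branch of $g_m$ has constant slope $\pm m$ with $m\ge2$, so the inequality holds with $N=1$ and $\alpha=1/m$ by direct computation, and it is then the passage from weak mixing to ``the only unimodular eigenvalue of $P_\tau$ is a simple $1$'' that carries the essential ergodic-theoretic content.
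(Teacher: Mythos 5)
The paper does not prove this statement: it is quoted directly from \cite{boyarsky1997laws} as Theorem~8.3.1, so there is no in-paper argument to compare against. Your outline is essentially the proof given in that reference and in the classical literature: a Lasota--Yorke inequality on $BV(I)$, quasi-compactness of $P_\tau$ via Ionescu-Tulcea--Marinescu (or Hennion), and elimination of the peripheral spectrum other than a simple eigenvalue $1$ using uniqueness of the a.c.i.m.\ and weak mixing, followed by the Riesz-projection/spectral-gap estimate. You are also right to flag that the hypothesis ``piecewise monotonic'' as transcribed here is too weak on its own --- the Lasota--Yorke step needs the uniform expansion and branch regularity built into the class of maps treated in \cite{boyarsky1997laws} --- which is harmless for the paper's application since the multiple-tent maps $g_m$ are piecewise linear with slopes $\pm m$, $m\geq 2$.
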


The multiple-tent map $g_m$ on $I=[0,1]$ is clearly piecewise monotonic and it has the Lebesgue measure $L$ as an invariant measure for all $m\in\mathbb{N}$. This is also the unique absolutely continuous invariant measure, whose existence is guaranteed by \cite[Theorem~4]{lasota1982invariant}. 

We will show that $g_m$ is strongly mixing, such that both Theorem~\ref{thm:weak_convergence} \&~\ref{thm:strong_convergence} apply. To show this, we will first establish a result about the action of $g_m$ when expressed in $m$-ary expansions.

\begin{lemma} \label{lm:gmshifting}
    Fix some $m\in\mathbb{N}$ and write both $x\in[0,1]$ and $g_m(x)$ in their $m$-ary expansions
    \begin{equation*}
        x = \sum_{i=1}^{\infty}\frac{a_i}{m^i}\,, \quad g_m(x) = \sum_{i=1}^{\infty}\frac{b_i}{m^i},
    \end{equation*}
    where $a_i, b_i\in\{0,1,2,\dots,m-1\}$ for all $i\in\mathbb{N}$. Then, applying $g_m$ has the effect of shifting the indices to the left once and flipping them when the first index is odd:
    \begin{equation}
        \begin{cases}
            b_i = a_{i+1} \quad\forall i\in\mathbb{N} & \text{when $a_1$ is even} \\
            b_i = m-1-a_{i+1}\quad\forall i\in\mathbb{N} & \text{when $a_1$ is odd.}
        \end{cases}
    \end{equation}
\end{lemma}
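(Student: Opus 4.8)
The plan is to analyse the piecewise-linear map $g_m$ branch by branch and see how each branch acts on the $m$-ary digits, then combine the branches. First I would fix $x\in[0,1]$ with $m$-ary expansion $x=\sum_{i\ge 1}a_i/m^i$, and observe that the leading digit $a_1$ is precisely the index $j$ such that $x\in[\frac{j}{m},\frac{j+1}{m}]$ (up to the measure-zero ambiguity at the breakpoints, which I would either handle by a convention on the expansion or simply ignore since the final statement is really an almost-everywhere statement). The ``tail'' $\sum_{i\ge 1}a_{i+1}/m^i$ is exactly $m x - a_1 \in [0,1]$, i.e. the fractional part after one left shift of the digits.

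Next I would split into the two cases in the definition \eqref{eq:gm} of $g_m$. When $a_1=2j$ is even, $x\in[\frac{2j}{m},\frac{2j+1}{m}]$ and $g_m(x)=mx-2j=mx-a_1$, which by the previous paragraph is $\sum_{i\ge 1}a_{i+1}/m^i$; reading off digits gives $b_i=a_{i+1}$ for all $i$. When $a_1=2j-1$ is odd, $x\in[\frac{2j-1}{m},\frac{2j}{m}]$ and $g_m(x)=-mx+2j = 1-(mx-(2j-1)) = 1-(mx-a_1)$. Since $mx-a_1=\sum_{i\ge 1}a_{i+1}/m^i$ and $1=\sum_{i\ge 1}(m-1)/m^i$, subtracting digit-by-digit (with no borrowing needed, as each $a_{i+1}\le m-1$) gives $1-(mx-a_1)=\sum_{i\ge 1}(m-1-a_{i+1})/m^i$, hence $b_i=m-1-a_{i+1}$ for all $i$. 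This establishes the claimed dichotomy.

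The only real subtlety — and the point I would be most careful about — is the non-uniqueness of $m$-ary expansions at $m$-adic rationals (the breakpoints $\frac{j}{m}$ and their preimages), where a digit string ending in all $(m-1)$'s equals one ending in all $0$'s, and where $g_m$ itself is only defined up to the choice of branch at a breakpoint. I would dispatch this by noting that the set of such points is countable, hence Lebesgue-null, so the identity holds almost everywhere; alternatively one can fix the convention of never using the terminating-in-$(m-1)$ expansion and check the endpoints of each branch directly. Since Lemma~\ref{lm:gmshifting} will only be used (in the proof that $g_m$ is strongly mixing, and thence in Theorem~\ref{thm:ergodic}) through statements about measurable sets, the almost-everywhere version is entirely sufficient, so I would not belabour the breakpoint analysis.
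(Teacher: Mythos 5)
Your proposal is correct and follows essentially the same route as the paper's own proof: split into the two branches of the definition \eqref{eq:gm}, compute $g_m(x)$ as $mx-a_1$ (even case) or $1-(mx-a_1)$ (odd case), and in the odd case rewrite $1=\sum_{i\ge1}(m-1)/m^i$ to read off $b_i=m-1-a_{i+1}$. Your explicit handling of the non-uniqueness of $m$-ary expansions at the breakpoints is a careful touch the paper leaves implicit, but it does not change the argument.
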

\begin{proof}
    Recall the definition for $g_m$ \eqref{eq:gm} which is divided into two cases. When $x\in[\frac{2j}{m},\frac{2j+1}{m}]$ for $j=0,1,2,\dots,\lfloor\frac{m-1}{2}\rfloor$, the first index $a_1 = 2j$ is even and
    \begin{align*}
        \sum_{i=1}^{\infty}\frac{b_i}{m^i} &= g_m(x) \\
        &= m\times\sum_{i=1}^{\infty}\frac{a_i}{m^i} - 2j \\
        &= \sum_{i=1}^{\infty}\frac{a_{i+1}}{m^i} + m\times\frac{2j}{m} - 2j \\
        &= \sum_{i=1}^{\infty}\frac{a_{i+1}}{m^i},
    \end{align*}
    which implies $b_i = a_{i+1}$.

    Conversely, when $x\in[\frac{2j-1}{m},\frac{2j}{m}]$ for $j=1,2,\dots,\lfloor\frac{m}{2}\rfloor$, the first index $a_1 = 2j-1$ is odd and
    \begin{align*}
        \sum_{i=1}^{\infty}\frac{b_i}{m^i} &= g_m(x) \\
        &= -m\times\sum_{i=1}^{\infty}\frac{a_i}{m^i} + 2j \\
        &= -\sum_{i=1}^{\infty}\frac{a_{i+1}}{m^i} - m\times\frac{2j-1}{m} + 2j \\
        &= 1-\sum_{i=1}^{\infty}\frac{a_{i+1}}{m^i} \\
        &= \sum_{i=1}^{\infty}\frac{m-1}{m^i} - \sum_{i=1}^{\infty}\frac{a_{i+1}}{m^i}\\
        &= \sum_{i=1}^{\infty}\frac{m-1-a_{i+1}}{m^i},
    \end{align*}
    which implies $b_i = m-1-a_{i+1}$.
\end{proof}

We are now ready to show that $g_m$ is strongly mixing.

\begin{theorem}
    The multiple-tent map $g_m$ is strongly mixing for each $m\in\mathbb{N}$.
\end{theorem}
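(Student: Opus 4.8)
The plan is to use the $m$-ary expansion picture from Lemma~\ref{lm:gmshifting} to reduce strong mixing of $g_m$ to a statement about the left-shift on sequences, which is a classical Bernoulli-type argument. First I would set up the symbolic coding: identify $[0,1]$ (up to a measure-zero set of points with non-unique expansions) with the sequence space $\{0,1,\dots,m-1\}^{\mathbb{N}}$ equipped with the Lebesgue measure $L$, which corresponds to the product (uniform i.i.d.) measure on digits. Under this identification, Lemma~\ref{lm:gmshifting} says $g_m$ acts as a skew shift: it drops the first digit $a_1$, shifts the rest left, and either keeps them or replaces each $a_{i+1}$ by $m-1-a_{i+1}$ depending on the parity of $a_1$. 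The key observation is that the digit-flip $a\mapsto m-1-a$ is a measure-preserving bijection of $\{0,\dots,m-1\}$, so at the level of the product measure $g_m$ is measurably conjugate (via a digit-relabelling that depends measurably on the history) to the ordinary one-sided Bernoulli shift $\sigma$ on $m$ symbols.

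Next I would verify strong mixing directly on cylinder sets, since these generate $\mathcal{B}$ and a standard approximation argument (cylinders are an algebra generating the $\sigma$-algebra, and $L$ is a regular measure) upgrades mixing on cylinders to mixing on all of $\mathcal{B}$. For cylinder sets $A$ (depending only on digits $1,\dots,j$) and $B$ (depending only on digits $1,\dots,k$), the preimage $g_m^{-n}A$ is determined by digits $n+1,\dots,n+j$ together with the parities of digits $1,\dots,n$ that accumulate the sign flips. Because the digit-flip is a bijection of the digit alphabet and the digits are i.i.d.\ uniform, for $n\ge k$ the event $g_m^{-n}A$ is independent of $B$: conditioning on any value of digits $1,\dots,k$ (hence on $B$), the conditional distribution of digits $n+1,\dots,n+j$ is still uniform, so $L(g_m^{-n}A\cap B)=L(A)L(B)$ exactly once $n\ge k$. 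This gives strong mixing with the defining limit in Definition~\ref{def:mixing}(i) actually attained (not merely in the limit) on cylinders, and then one passes to general measurable sets by approximating $A$ and $B$ within $\varepsilon$ in symmetric difference by finite unions of cylinders and using $|L(g_m^{-n}A\cap B)-L(A)L(B)|$ is controlled by those $\varepsilon$-errors uniformly in $n$ (since $g_m$ preserves $L$).

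The main obstacle — really the only delicate point — is handling the parity-dependent digit flip cleanly: one must check that the sign flips do not destroy independence. The cleanest way is the conjugacy remark above: define $\Phi:\{0,\dots,m-1\}^{\mathbb{N}}\to\{0,\dots,m-1\}^{\mathbb{N}}$ by letting the $i$-th output digit be $a_i$ or $m-1-a_i$ according to the parity of the number of odd digits among $a_1,\dots,a_{i-1}$; then $\Phi$ is a measurable bijection pushing $L$ forward to $L$ (each coordinate map is a bijection of the alphabet applied to an independent uniform digit, measurably in the preceding digits), and $\Phi\circ g_m = \sigma\circ\Phi$, so $g_m$ is isomorphic as a measure-preserving system to the Bernoulli shift $\sigma$, which is well known to be strongly mixing. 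Either route — the direct cylinder computation or the explicit conjugacy to $\sigma$ — completes the proof; I would present the direct computation as the main argument since it is self-contained and quantitative, remarking that it exhibits $g_m$ as (isomorphic to) a Bernoulli shift.
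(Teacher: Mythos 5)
Your main argument---verifying the mixing identity exactly on $m$-adic cylinder sets via the digit-shift description of Lemma~\ref{lm:gmshifting} and then passing to all Borel sets---is correct and is essentially the paper's own proof (the paper extends from cylinders via the $\pi$-$\lambda$ lemma rather than your symmetric-difference approximation, and is in fact slightly less careful than you about the preimage also depending on the accumulated flip parities). One caveat on your auxiliary conjugacy remark: the prefix-parity map $\Phi$ you define intertwines $g_m$ with the shift only for odd $m$ (when $m$ is even the flip $a\mapsto m-1-a$ reverses digit parity, so the cumulative flip state is governed by $a_n$ alone rather than by the parity of the number of odd digits among $a_1,\dots,a_{n-1}$), but since you present the direct cylinder computation as the main argument this does not affect the validity of the proof.
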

\begin{proof}
    The proof is a generalisation of \cite[Solution 3.4.5]{boyarsky1997laws}, where the authors proved this result for $m=2$. Consider dividing $[0,1]$ into $m$-adic intervals:
    \begin{equation}
    \mathcal{B}_m = \left\{\left[\frac{a}{m^i},\frac{b}{m^i}\right) \ : \ a,b,i\in\mathbb{N} \text{ and } 0\leq a<b\leq m^i\right\}.
    \end{equation}
    Note that whether or not some $x\in[0,1]$ falls in an interval $[a/m^i, b/m^i)\in\mathcal{B}_m$ is determined by the first $i$ indices of its $m$-ary expansion.

    By Lemma \ref{lm:gmshifting}, applying $g_m$ shifts the indices and flips when applicable. Therefore, whether $g_m(x)$ is in a given interval $A:=[a/m^i, b/m^i)\in\mathcal{B}_m$ is determined by the 2\textsuperscript{nd} to the $(i+1)$\textsuperscript{th} indices of the $m$-ary expansion of $x$. By induction, whether $g_m^n(x)$ is in $A$, or equivalently $x\in g_m^{-n}(A)$, is determined by the $(n+1)$\textsuperscript{th} to the $(n+i)$\textsuperscript{th} indices. This observation applies to any $A\in\mathcal{B}_m$.

    Consider any $A,B\in\mathcal{B}_m$. For $n$ large enough, the set of indices that determines $g_m^{-n}(A)$ will be disjoint from the set of indices that determines $B$. That is, for large enough $n$,
    \begin{equation*}
        L(g_m^{-n}(A)\cap B) = L(g_m^{-n}(A))\cdot L(B),
    \end{equation*}
    where $L$ is the Lebesgue measure (which is $g_m$-invariant). Hence, for any $A,B \in\mathcal{B}_m$, 
    \begin{equation}
    \lim_{n\rightarrow\infty}L(g_m^{-n}(A)\cap B) = \lim_{n\rightarrow\infty} L(g_m^{-n}(A))\cdot L(B).
    \end{equation}
    Finally, applying the $\pi$-$\lambda$ system lemma twice, this applies to any $A,B$ in the Borel $\sigma$-algebra $\mathcal{B}$, since $\mathcal{B}_m$ is a $\pi$-system that generates $\mathcal{B}$.
\end{proof}

Now that we have established that $g_m$ is strongly mixing, we may use Theorem \ref{thm:weak_convergence} and Theorem \ref{thm:strong_convergence} to show convergence of absolutely continuous measures under iterations of the Frobenius-Perron operator $P_{g_m}$. Theorem \ref{thm:weak_convergence} applies to measures with any probability density functions, but it only proves weak convergence. Theorem \ref{thm:strong_convergence} shows strong convergence at an exponential rate $O(r^n)$ for density functions with bounded variation, but it doesn't give the explicit value for $r$, which we managed to find by hand in Section \ref{sec:convergence}.

\section{Calculation of $I(\pm1)$} \label{app:integrals}

We can use our knowledge of the average Lyapunov exponent from Theorem~\ref{thm:lyapunov}, along with the invariance of Lyapunov exponent under conjugacy, to prove that $I(a)=0$ for two more values $a=\pm1$.

\begin{theorem}
    \begin{equation*}
        \int_{-\pi/2}^{\pi/2} \log|2\sin y-1|\,dy = \int_{-\pi/2}^{\pi/2} \log|2\sin y+1|\,dy = 0.
    \end{equation*}
\end{theorem}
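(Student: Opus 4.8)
The plan is to derive both identities from the already-computed Lyapunov exponent of the cubic generalised logistic map. Write $I(a)=\frac{1}{\pi}\int_{-\pi/2}^{\pi/2}\log|2\sin y-a|\,\de y$ as in \eqref{eq:Ia}; then the two integrals in the statement are precisely $\pi I(1)$ and $\pi I(-1)$, so it suffices to show $I(1)=I(-1)=0$. Since the substitution $y\mapsto -y$ gives $I(-a)=I(a)$, i.e. $I$ is even, it is in fact enough to prove that $I(1)+I(-1)=0$.

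The key observation is that $f_3(x)=x^3-3x$ has derivative $f_3'(x)=3x^2-3=3(x-1)(x+1)$, so $\log|f_3'(x)|=\log 3+\log|x-1|+\log|x+1|$. Substituting this into the average Lyapunov exponent formula \eqref{eq:average_lyapunov} with the invariant density $D$ from \eqref{eq:delta_density}, and splitting the integral (each piece being finite, since $\log|x\mp 1|$ is integrable against the weight $D$, which is bounded near $\pm1$), we obtain, after the change of variables $x=2\sin y$ used to pass to \eqref{eq:Ia},
\begin{equation*}
    \lambda_3=\int_{-2}^{2}\log|f_3'(x)|\,D(x)\,\de x=\log 3+I(1)+I(-1).
\end{equation*}
On the other hand, Theorem~\ref{thm:lyapunov} gives $\lambda_3=\log 3$. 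Comparing the two expressions forces $I(1)+I(-1)=0$, and combined with the evenness of $I$ this yields $I(1)=I(-1)=0$, which is the assertion.

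There is no serious obstacle here: once Theorem~\ref{thm:lyapunov} is in hand, the argument is a one-line bookkeeping, and the only things requiring care are the absolute convergence and the splitting of the integral above, together with the fact that $\lambda_3=\log 3$ for $f_3$ (which, exactly as in Theorem~\ref{thm:lyapunov}, relies on the invariance of the Lyapunov exponent under the conjugacy $C$). If one prefers a proof avoiding that invariance assumption, the same conclusion follows elementarily: under $x=2\cos\theta$ one has $I(1)+I(-1)=\frac{1}{\pi}\int_0^{\pi}\log|4\cos^2\theta-1|\,\de\theta=\frac{1}{\pi}\int_0^{\pi}\log\bigl|\tfrac{\sin 3\theta}{\sin\theta}\bigr|\,\de\theta$, and then $\int_0^{\pi}\log|\sin 3\theta|\,\de\theta=\int_0^{\pi}\log|\sin\theta|\,\de\theta$ by the $\pi$-periodicity of $|\sin|$, so the two contributions cancel.
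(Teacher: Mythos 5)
Your main argument is essentially identical to the paper's own proof: you establish the evenness $I(a)=I(-a)$ via the substitution $y\mapsto -y$, expand $\log|f_3'(x)|=\log 3+\log|x-1|+\log|x+1|$ inside the average Lyapunov exponent integral so that $\lambda_3=\log 3+I(1)+I(-1)$, and compare with $\lambda_3=\log 3$ from Theorem~\ref{thm:lyapunov} to force $I(1)=I(-1)=0$; this is exactly the computation in the paper's appendix. The alternative you append at the end is, however, a genuinely different route worth highlighting: using $4\cos^2\theta-1=\sin 3\theta/\sin\theta$ and the $\pi$-periodicity of $|\sin|$, you get $I(1)+I(-1)=0$ by a direct elementary computation, with no reference to Lyapunov exponents. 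This is arguably stronger, because the paper's proof of Theorem~\ref{thm:lyapunov} rests on the \emph{assumed} (not proved) invariance of the Lyapunov exponent under the conjugacy $C$, so the Lyapunov route makes the identity conditional on that assumption, whereas your trigonometric cancellation makes it unconditional; what the paper's route buys instead is that the identity comes for free once the Lyapunov machinery is accepted, and it explains \emph{why} $I$ vanishes at the critical points of $f_3$.
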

\begin{proof}
    Note that $I(a)=I(-a)$ for all $a\in\mathbb{R}$ since
    \begin{equation*}
        \begin{split}
            I(-a)&=\frac{1}{\pi}\int_{-\pi/2}^{\pi/2} \log|2\sin y+a|\,dy\\
            &=\frac{1}{\pi}\int_{\pi/2}^{-\pi/2} -\log|2\sin (-y)+a|\,dy\\
            &=\frac{1}{\pi}\int_{-\pi/2}^{\pi/2} \log|2\sin y-a|\,dy = I(a).
        \end{split}
    \end{equation*}
    
    When $m=3$,
    \begin{equation*}
        \lambda_3 = \log3+I(-1)+I(1).
    \end{equation*}
    Since $\lambda_3 = \log 3$, this implies that $I(-1)=I(1)=0$.
\end{proof}

\section*{Acknowledgements}
B.D. was supported by the EPSRC under grant number EP/X027422/1. Y.X. was supported by a Mary Lister McCammon Fellowship at Imperial College London, funded by the EPSRC CDT for Collaborative Computational Modelling at the Interface (grant number EP/Y034767/1).

\section*{Data availability}

No datasets were generated or analysed during the current study.

\bibliographystyle{abbrv}
\bibliography{references}{}
\end{document}